\newtheorem{thr}{Theorem}[section]
\newtheorem{co}[thr]{Corollary}
\newtheorem{lm}[thr]{Lemma}
\newtheorem{pr}[thr]{Proposition}
\newtheorem{ex}[thr]{Example}
\newtheorem{defn}[thr]{Definition}
\newtheorem{rem}[thr]{Remark}
\begin{document}
\title{Generalized projections on general Banach spaces}
\author{Akhtar A. Khan\footnotemark[1]\and Jinlu Li\footnotemark[7]\and Simeon Reich\footnotemark[3]}  \maketitle
\renewcommand{\thefootnote}{\fnsymbol{footnote}}
\footnotetext[1] {School of Mathematical Sciences, Rochester Institute of Technology, Rochester, New York, 14623, USA. (aaksma@rit.edu)}
\footnotetext[7] {Department of Mathematics, Shawnee State University, Portsmouth, Ohio 45662, USA. (jli@shawnee.edu) }
\footnotetext[3] {Department of Mathematics, The Technion – Israel Institute of Technology, 32000 Haifa, Israel. (sreich@technion.ac.il) }
{\large
\begin{abstract}  In general Banach spaces, the metric projection map lacks the powerful properties it enjoys in Hilbert spaces. There are a few generalized projections that have been proposed in order to resolve many of the deficiencies of the metric projection. However, such notions are predominantly studied in Banach spaces with rich topological structures, such as uniformly convex Banach spaces. In this paper, we investigate two notions of generalized projection in general Banach spaces. Various examples are provided to demonstrate the proposed notions and the loss of structure in the generalized projections after migrating from specially structured Banach spaces to general Banach spaces. Connections between the generalized projection and the metric projection are thoroughly explored.  \\[3pt]
\noindent{\color{structure}Key words.}  Generalized Chebyshev set, generalized identical points, generalized metric projection, generalized projection,  generalized proximal set.\\
\noindent{\color{structure}2010 Mathematics Subject Classification.}\ 41A10, 41A50, 47A05, 58C06.
\end{abstract}
\section{Introduction}\label{KLR-S1}
Let $H$ be a  Hilbert space with norm $\|\cdot\|$  and let $K$ be a nonempty, closed, and convex subset of $H$. The metric projection $P_{\text{\tiny{K}}}:H\to K$ assigns to any $x\in H,$ the unique point $P_{\text{\tiny{K}}}x$ in $K$ such that
\begin{equation}\label{KLR-S1-E1}\|x-P_{\text{\tiny{K}}}x\|\leq \|x-z\|,\ \text{for every}\ z\in K.
\end{equation}

The projection map exemplifies one of the most important classes of nonlinear maps and has found numerous applications in optimization, approximation theory, inverse problems, variational inequalities, image processing, neural networks, machine learning, and others. Due to the richness of the Hilbertian structure, the projection map in a Hilbert space  enjoys immensely useful properties. For instance, the projection map is monotone (but not strongly monotone, in general), nonexpansive, and it renders an absolute best approximation to the underlying convex set. Moreover, a robust variational characterization holds, which has far-reaching consequences in many branches of applied mathematics. For example, the variational form transforms a variational inequality into a fixed point problem convenient for studying a wide variety of iterative algorithms.

On the other hand, many applied models, such as the identification problems where the regularization space is a Banach space, need the projection map in general Banach spaces. Consequently,  numerous authors studied the notion of the metric projection in Banach spaces.   Note that although the minimization problem \eqref{KLR-S1-E1} makes sense in general Banach spaces, the existence and uniqueness of solutions cannot be guaranteed. Unfortunately, the metric projection in Banach spaces does not possess many desirable properties. Moreover, the lack of a Hilbertian structure needs to be replaced by an interplay between the primal and the dual space by exploiting the properties of the duality map, which causes additional difficulties. Also, the desire to obtain a fixed point formulation of a variational inequality suggests defining the projection map from the dual space to the convex set. See \cite{Aba83,AkmNamVee15,Alb17,BalMarTei21,Bau03,Bou15,BroDeu72,Bui02,CheGol59,ChiLi05,Den01,DeuLam80,FitPhe82,Li04,Li04a,LiZhaMa08,Osh70,Pen05,PenRat98,Ric16,Sha16}, and the cited references.

Inspired by the shortcomings of the metric projection, the notions of generalized projection and generalized metric projection were proposed and used extensively in numerous diverse disciplines, see \cite{Alb93,Alb96}. Although there are notable exceptions (see \cite{Li04a,Li05}), the two notions are mainly studied in Banach spaces with favorable topological structures, such as uniformly convex and uniformly smooth Banach spaces. The basic properties of the generalized projection and the generalized metric projection and their connections are largely unknown in general Banach spaces.

The primary object of this research is to fill this void by studying the generalized projection and generalized metric projection in the framework of general Banach spaces. In Definition~\ref{KLR-S2.1-D1}, we introduce the generalized projection in an arbitrary Banach space onto a closed but not necessarily convex set and collect its basic properties in Proposition~\ref{KLR-S2.1-P1}. We demonstrate three cases in a general Banach space, where the generalized projection is set-valued, single-valued, and not defined. Then, focusing on the critical case where the underlying closed set is convex, we study the structure of the generalized projection map (see Proposition~\ref{KLR-S2.3-P1}). We also characterize the reflexivity of the Banach space using the generalized projection (see Theorem~\ref{KLR-S2.3-T1}). We prove a partial variational principle for the generalized projection  (see Theorem~\ref{KLR-S2.4-T1}) and utilize examples to show that the variational principle is only a sufficient but not a necessary condition. The generalized projection is a map from the dual space to the convex set. To have an analog of the metric projection, which is a map from the primal space to the convex set, we propose a new notion of generalized metric projection in a general Banach space onto a closed set (see Definition~\ref{KLR-S3.1-D1}). We propose and study generalized proximal sets and generalized Chebyshev sets in Banach space and relate them to the generalized metric projection (see Theorem~\ref{KLR-S3.2-T1}). We prove the monotonicity of generalized metric projection (see Proposition~\ref{KLR-S3.3-P1}) and establish a partial variational principle (see Theorem~\ref{KLR-S3.3-T2}). We study the connections between the generalized metric projection and the normalized duality map by proposing generalized identical points; we use this to characterize the strictly convex Banach spaces. The inverse of the normalized duality map is also studied in the context of the generalized metric projection.

The contents of the paper are organized into seven sections. After a brief introduction in Section~\ref{KLR-S1}, we study the generalized projection in Section~\ref{KLR-S2}. The focus of Sections~\ref{KLR-S3} is on the developments related to the generalized metric projection. Section~\ref{KLR-S4} is devoted to studying connections between the generalized projection and the metric projection. The focus of Section~\ref{KLR-S5} is on the generalized projection in the Banach space of all bounded continuous real-valued maps. In Section~\ref{KLR-S6} we study the variational principles for the metric projection. The paper concludes with some remarks and future research directions in Section~\ref{KLR-S7}.
\subsection{Notations}\label{KLR-S1.1} We will frequently use the Banach space $\ell_1$ of all absolutely summable real sequences with dual space $\ell_1^* =\ell_{\infty}$. Note that $\ell_1$ is neither reflexive nor strictly convex. By $c$, we denote the Banach space of convergent sequences $t=\{t_n\}$ of real numbers with the norm $\displaystyle \|t\|=\sup_{1\leq n< \infty}|t_n|$. The dual space $c^*=\ell_1.$ For any $x=(x_0,x_1,\ldots)\in \ell_1$ and $t=\{t_n\}\in c,$ we define the pairing by
$$\langle x,t\rangle=x_0\lim_{n\to \infty}t_n+\sum_{n=1}^{\infty}x_nt_n.$$
By $c_0$ we denote the closed subspace of $c$ containing all convergent real sequences with limit zero. See \cite{DunSch88} for details.

For $r>0$ and for $k\in \mathds{R}$, we define the following nonempty, closed, and convex subsets of $\ell_1:$
\begin{subequations}\label{KLR-S1.1-E1}
\begin{align}
S(r)&=S(\theta,r):=\{x\in \ell_1:\ \|x\|\leq r\}.\\
D(r) &:=\{z\in \ell_1:\ \|z\|= r\ \text{and}\ z\ \text{has nonnegative entries}\}.\\
T(k) &:=\left\{z=(z_1,z_2,\ldots)\in \ell_1:\ \sum_{n=1}^{\infty}z_n=k\right\}.
\end{align}
\end{subequations}
The set $S(r)$ is the closed ball in $\ell_1$ centered at $\theta$ and with radius $r$, $D(r)$ is the $r$-simplex in $\ell_1$, and $T(k)$ is a closed hyperplane in $\ell_1.$ For simplicity, for $r=1$, we use the notions $S,D$, and $T$.

We conclude this subsection by giving some properties of $\ell_1$, where we use the notion:
$$D^+(r):=\{z\in D(r):\ z\ \text{has all positive entries}\}.$$
\begin{lm}\label{KLR-S1.1-L1} For $r>0$ and for $\beta_r :=(r,r,\ldots)\in \ell_{\infty},$ we have
\begin{description}
\item[($a$)] $D(r)\subseteq S(r)\cap T(r)$.
\item[($b$)] For any $x=(x_1,x_2,\ldots)\in T,$ we have $\|x\|\geq 1$. Moreover, for $x=(x_1,x_2,\ldots)\in T,$ we have
\begin{equation}\label{KL-S2-SS2-E00}\|x\|=1\quad \Leftrightarrow\quad x\in D\ \text{and}\ \|x\|>1\quad \Leftrightarrow\quad x\in T\backslash D.
\end{equation}
\end{description}
\end{lm}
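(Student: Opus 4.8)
The plan is to prove (a) directly from the definitions and (b) by exploiting the equality case in the triangle inequality for $\ell_1$.

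For part (a), I would fix an arbitrary $z \in D(r)$. By definition $\|z\| = r$, so $z \in S(r)$. Since every entry of $z$ is nonnegative and $z \in \ell_1$, the series $\sum_n z_n$ converges and $\sum_n z_n = \sum_n |z_n| = \|z\| = r$, so $z \in T(r)$; hence $z \in S(r)\cap T(r)$. In particular, taking $r = 1$ gives $D \subseteq S \cap T \subseteq T$, which I will use freely in part (b).

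For the first assertion of (b), fix $x = (x_1,x_2,\dots) \in T$, so that $\sum_n x_n = 1$ with the series absolutely convergent because $x \in \ell_1$. The termwise triangle inequality for absolutely convergent series then gives $\|x\| = \sum_n |x_n| \ge |\sum_n x_n| = 1$. (Equivalently, one may note that $T = \{x \in \ell_1 : \langle x, \beta_1\rangle = 1\}$, where $\beta_1 = (1,1,\dots) \in \ell_\infty = \ell_1^*$ has norm $1$, so $1 = |\langle x,\beta_1\rangle| \le \|x\|\,\|\beta_1\| = \|x\|$.) For the ``Moreover'' part I would establish the single equivalence $\|x\| = 1 \Leftrightarrow x \in D$ for $x \in T$. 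If $x \in D$, then $\|x\| = 1$ by definition of $D = D(1)$. Conversely, if $x \in T$ and $\|x\| = 1$, then $\sum_n(|x_n| - x_n) = \|x\| - \sum_n x_n = 0$; each summand $|x_n| - x_n$ is nonnegative, so every one of them vanishes, i.e.\ $x_n = |x_n| \ge 0$ for all $n$, and together with $\|x\| = 1$ this means $x \in D$. Finally, since $\|x\| \ge 1$ for every $x \in T$ and the displayed equivalence identifies $\|x\| = 1$ with $x \in D$, it follows that for $x \in T$ one has $\|x\| > 1$ if and only if $x \notin D$, that is, if and only if $x \in T \setminus D$.

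There is no real obstacle here; the only points needing (routine) care are the justification of the termwise triangle inequality for the infinite series and the passage from ``a series of nonnegative terms sums to $0$'' to ``every term is $0$,'' both standard facts. The conceptual content is simply that lying in the affine hyperplane $T$ forces the $\ell_1$-norm to be at least $1$, with equality exactly when there is no cancellation among the coordinates, which is precisely the condition defining the simplex $D$.
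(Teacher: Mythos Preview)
Your proof is correct. The paper states this lemma without proof (it is treated as an elementary observation in the notations section), so there is no approach to compare against; your argument via the termwise triangle inequality and the nonnegativity of $|x_n|-x_n$ is exactly the natural one.
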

\section{The Generalized Projection $\pi$}\label{KLR-S2}
\subsection{The Definition and Basic Properties}\label{KLR-S2.1}
Let $B$ be a (real) Banach space, let $B^*$ be the dual of $B$, and let $\langle \cdot,\cdot\rangle_{\text{\tiny{\emph{B}}}}$ be the pairing between $B^*$ and $B$. In the following discussion, in a Banach space $N$, we will denote the origin by $\theta_{\text{\tiny{\emph{N}}}}$ and the norm by $\|\cdot\|_{\text{\tiny{\emph{N}}}}$. We will drop the subscript when there is no confusion about the space.

We recall that the normalized duality map $J:B\to 2^{B^*}\backslash \{\emptyset\}$ is defined by
$$J(x)=\{jx\in B^*|\ \langle jx,x\rangle_{\text{\tiny{\emph{B}}}}=\|jx\|_{\text{\tiny{\emph{B}}}^*}\|x\|_{\text{\tiny{\emph{B}}}}=\|x\|_{\text{\tiny{\emph{B}}}}^2=\|jx\|_{\text{\tiny{\emph{B}}}^*}^2 \}.$$
The normalized duality map $J$ has many valuable properties (see Section~\ref{KLR-S6}) and plays a vital role in many fields, including approximation theory, optimization, and variational inequalities.

We define a Lyapunov functional $V:B^*\times B\to \mathds{R}$ by the following formula:
\begin{equation}\label{KLR-S2.1-E1}
V(\phi,x):=\|\phi\|^2_{\text{\tiny{\emph{B}}}^*}-2\langle \phi,x\rangle_{\text{\tiny{\emph{B}}}}+\|x\|^2_{\text{\tiny{\emph{B}}}},\quad \text{for any}\ \phi\in B^*,\ \text{and}\ x\in B.
\end{equation}

We begin with the following definition.
\begin{defn} \label{KLR-S2.1-D1} Let $B$ be a Banach space with $B^*$ as its dual and let $C$ be a nonempty subset of $B$. We define a set-valued map $\pi_{\text{\tiny{\emph{C}}}}:B^*\to 2^C$ by
 \begin{equation}\label{KLR-S2.1-D1-E1}
\pi_{\text{\tiny{\emph{C}}}}\phi=\left\{u\in C:\ V(\phi ,u)=\inf_{y\in C}V(\phi,y)\right\},\quad \text{for any}\ \phi\in B^*.
\end{equation}
The map $\pi_{\text{\tiny{\emph{C}}}}$ is called the \textbf{generalized projection} from $B^*$ to $2^C.$ For any $\phi\in B^*$ such that $\pi_{\text{\tiny{\emph{C}}}}\phi \ne \emptyset$, each element of the set $\pi_{\text{\tiny{\emph{C}}}}\phi$ is called a generalized projection of $\phi$ onto $C$.
\end{defn}
\begin{rem}\label{KLR-S2.1-R1}
Assuming that $B$ is a uniformly convex and uniformly smooth Banach space, Alber~\cite{Alb96} employed the above Lyapunov functional $V$ to propose the generalized projection map from the dual space $B^*$  to a nonempty, closed, and convex subset of $B$. Our primary objective in this paper is to use the Lyapunov functional $V$ defined in \eqref{KLR-S2.1-E1} to propose and analyze an extension of the generalized projection map from uniformly convex and uniformly smooth Banach spaces to general Banach spaces. We also note that although the use of the Lyapunov functional $V$ to introduce the generalized projection is attributed to Alber~\cite{Alb93,Alb96}, Zarantonello~\cite{Zar84} introduced and analyzed the same functional to propose an extension of the metric projection on closed and convex sets in general reflexive Banach spaces and proved many exciting results.
\end{rem}

The following result collects some basic properties of the generalized projection map $\pi$.
\begin{pr}\label{KLR-S2.1-P1} Let $B$ be a Banach space with dual $B^*$ and let $C$ be a nonempty subset of $B$. Then:
\begin{description}
\item[($a$)] $V(\phi,x)$ is continuous.
\item[($b$)] $V(\phi,x)$ is convex in $\phi$ when $x$ is fixed, and convex in $x$ when $\phi$ is fixed.
\item[($c$)] For any $x\in B$ and any $\phi\in B^*$, we have
$$(\|\phi\|-\|x\|)^2\leq V(\phi,x)\leq (\|\phi\|+\|x\|)^2.$$
\item[($d$)] For any $x\in B$ and any $\phi\in B^*$, the following equivalence holds:
$$V(\phi,x)=0\quad \Leftrightarrow \quad \phi\in J(x).$$
\item[($e$)] The map $\pi_{\text{\tiny{\emph{C}}}}$ is fixed on $C$, that is, for any $y,z\in C$, we have
$$z\in \pi_{\text{\tiny{\emph{C}}}}(jy), \ \text{for some}\ jy\in Jy\quad \Leftrightarrow \quad jy\in Jz.$$
\item[($f$)] The map $\pi_{\text{\tiny{\emph{C}}}}$ is monotone on $B^*$, that is, for all $\phi_1,\phi_2\in B^*$, we have
$$\langle \phi_1-\phi_2,u_1-u_2\rangle\geq 0,\quad \text{for any}\ u_1\in \pi_{\text{\tiny{\emph{C}}}}(\phi_1)\ \text{and}\  u_2\in \pi_{\text{\tiny{\emph{C}}}}(\phi_2).$$
\end{description}
\end{pr}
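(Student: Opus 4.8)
The plan is to dispatch the six items essentially in the stated order, since (c) and (d) feed into the later parts, and everything reduces to bookkeeping on the closed form \eqref{KLR-S2.1-E1}. Items (a)--(c) are immediate. For (a), write $V(\phi,x)=\|\phi\|^2-2\langle\phi,x\rangle+\|x\|^2$ and note that each summand is continuous on $B^*\times B$: the two norm maps are continuous, $t\mapsto t^2$ is continuous, and the pairing $\langle\cdot,\cdot\rangle$ is a bounded bilinear form, hence continuous; a finite sum of continuous functions is continuous. For (b), fix $x$: then $\phi\mapsto\|\phi\|^2$ is convex (the norm is convex and $t\mapsto t^2$ is convex and nondecreasing on $[0,\infty)$), $\phi\mapsto-2\langle\phi,x\rangle$ is affine, and $\|x\|^2$ is constant, so $V(\cdot,x)$ is convex; the argument in the second variable is symmetric. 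For (c), apply the duality bound $|\langle\phi,x\rangle|\le\|\phi\|\,\|x\|$ to the cross term: replacing $-2\langle\phi,x\rangle$ by $-2\|\phi\|\,\|x\|$ turns $V(\phi,x)$ into $(\|\phi\|-\|x\|)^2$, and replacing it by $+2\|\phi\|\,\|x\|$ turns $V(\phi,x)$ into $(\|\phi\|+\|x\|)^2$, giving the two inequalities simultaneously. In particular (c) records that $V\ge 0$ everywhere, which is exactly what (d)--(f) will exploit.

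For (d), the reverse implication is a substitution: if $\phi\in J(x)$ then $\langle\phi,x\rangle=\|x\|^2=\|\phi\|^2$, hence $V(\phi,x)=\|\phi\|^2-2\|x\|^2+\|x\|^2=0$. For the forward implication, suppose $V(\phi,x)=0$; the left inequality in (c) forces $(\|\phi\|-\|x\|)^2\le 0$, so $\|\phi\|=\|x\|$, and plugging this back into $0=V(\phi,x)=2\|x\|^2-2\langle\phi,x\rangle$ yields $\langle\phi,x\rangle=\|x\|^2=\|\phi\|\,\|x\|=\|\phi\|^2$, which is precisely the defining condition for $\phi\in J(x)$.

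Items (e) and (f) are then soft consequences. For (e), fix $y,z\in C$ and $jy\in Jy$. If $jy\in Jz$, then $V(jy,z)=0$ by (d), and since $V\ge 0$ by (c) this value is the infimum over $C$ and is attained at $z$, so $z\in\pi_C(jy)$. Conversely, if $z\in\pi_C(jy)$, then $V(jy,z)=\inf_{w\in C}V(jy,w)\le V(jy,y)$, and since $y\in C$ with $jy\in Jy$ we have $V(jy,y)=0$ by (d); together with $V(jy,z)\ge 0$ from (c) this gives $V(jy,z)=0$, hence $jy\in Jz$ by (d) once more. For (f), take $u_1\in\pi_C(\phi_1)$ and $u_2\in\pi_C(\phi_2)$; minimality gives $V(\phi_1,u_1)\le V(\phi_1,u_2)$ and $V(\phi_2,u_2)\le V(\phi_2,u_1)$. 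Adding these two inequalities, all four norm-squared terms cancel and only the cross terms survive, leaving $-2\langle\phi_1-\phi_2,u_1-u_2\rangle\le 0$, i.e. $\langle\phi_1-\phi_2,u_1-u_2\rangle\ge 0$.

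I do not expect a genuine obstacle here: the whole proposition is an exercise on the explicit Lyapunov formula, and notably it uses no geometric hypothesis on $B$ (no reflexivity, strict convexity, or smoothness), which is the point of working in a general Banach space. The only places that need a little care are invoking part (c) to conclude $\|\phi\|=\|x\|$ \emph{before} reading off the duality condition in the proof of (d), keeping the infimum and the value $0$ on the correct sides in (e) and (f), and being careful with the existential quantifier ``for some $jy\in Jy$'' in (e) so that the same functional $jy$ is used on both sides of the equivalence.
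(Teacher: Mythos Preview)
Your proof is correct and follows essentially the same approach as the paper: the paper omits (a)--(d) as ``straightforward'', proves (e) exactly as you do via (c) and (d), and for (f) merely cites \cite[Property~6.b]{Alb96}, which is precisely the two-inequality addition you wrote out. You have simply supplied the details the paper chose to suppress.
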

\begin{proof} The proofs of ($a$)-($d$) are straightforward and hence omitted. To prove ($e$), we assume that $z\in \pi_{\text{\tiny{\emph{C}}}}(jy)$, for some $jy\in Jy.$ Since $y\in C,$ we have
$$V(jy,z)=\|jy\|^2-2\langle jy,z\rangle+\|z\|^2\leq V(jy,y)=\|jy\|^2-2\langle jy,y\rangle+\|y\|^2=0. $$
Then, from ($d$), we obtain $jy\in Jz.$ For the converse, assume that $jy\in Jz.$ Then, by using property ($d$) once again, we obtain $V(jy,z)=0,$ which confirms that $z\in \pi_{\text{\tiny{\emph{C}}}}(jy).$ The proof of ($f$) is similar to the proof of \cite[Property 6.b]{Alb96} and hence we omit the proof here.
\end{proof}
\subsection{Some Examples}\label{KLR-S2.2}
If $B$ is a reflexive, strictly convex, and smooth Banach space and $C$ is a nonempty, closed, and convex subset of $B$, then from Alber~\cite{Alb93,Alb96}, the generalized projection $\pi_{\text{\tiny{\emph{C}}}}:B^*\to C$ is a well-defined single-valued map. However, the generalized projection defined in \eqref{KLR-S2.1-D1-E1} is a set-valued map in general Banach spaces. Li~\cite[Example 1.4]{Li04} gives a situation where the generalized projection is an empty set. On the other hand, Li~\cite[Example 1.2 and 1.3]{Li04} presents situations where the generalized projection is a well-defined set-valued map. This subsection provides three examples to demonstrate the cases where the generalized projection is a set-valued map, a single-valued map, and an empty set.

In the following example, the generalized projection $\pi$ is a well-defined set-valued map:
\begin{ex}\label{KLR-S2.2-Ex1} Let $\gamma=(1,1,1,\ldots)\in \ell_1^*=\ell_{\infty}.$ Then: $\pi_{\text{\tiny{\emph{T}}}}(\gamma)=D$, where $D$ is given in \eqref{KLR-S1.1-E1}.
\end{ex}
\begin{proof} By the properties of the simplex $T$ (see \eqref{KLR-S1.1-L1}), we compute
\begin{align*}
V(\gamma,x)&=1-2+1=0,\quad \text{for any}\ x\in D,\\
V(\gamma,y)&=1-2+\|y\|>0,\quad \text{for any}\ y\in T\backslash D,
\end{align*}
which completes the proof of the claim.
\end{proof}

We now present a situation where the generalized projection $\pi$ is a single-valued map:
\begin{ex}\label{KLR-S2.2-Ex2} Let $\ell_1^+$ and $\ell_{\infty}^+$ be the subsets of elements with non-negative entries in $\ell_1$ and $\ell_{\infty}$, and $\ell_1^-$ and $\ell_{\infty}^-$ be the subsets of elements with non-positive entries  in $\ell_1$ and $\ell_{\infty}$, respectively. Then:
\begin{description}
\item[($a$)] $\pi_{\ell_1^+}(\gamma)=\theta,$ for any $\gamma\in \ell_{\infty}^{-}$.
\item[($b$)] $\pi_{\ell_1^{-}}(\gamma)=\theta,$ for any $\gamma\in \ell_{\infty}^{+}$.
\end{description}
\end{ex}
\begin{proof} ($a$) Let $\gamma\in \ell_{\infty}^{-}$ be an arbitrary fixed element. Then,
$$V(\gamma,x)=\|\gamma\|^2-\langle \gamma,x\rangle+\|x\|^2\geq \|\gamma\|^2+\|x\|^2,\quad \text{for any}\ x\in \ell_1^{+},$$
and $V(\gamma,\theta)=\|\gamma\|^2$, which proves ($a$). The proof of ($b$) is analogous and hence omitted.
\end{proof}

In the following example, the generalized projection $\pi$ is an empty set (undefined).
\begin{ex}\label{KLR-S2.2-Ex3} Let $\displaystyle x=\left(0,1,2^{-1},2^{-2},\ldots\right)\in \ell_1=c^*.$ Then, $\pi_{c_0}(x)=\emptyset.$
\end{ex}
\begin{proof} For any positive integer $m$, we define $w_m\in c_0$ such that its  first $m$ entries are $2$ and all other entries are $0$. That is, $w_m=(2,2,\ldots,2,0,0,\ldots)$. Then,
$$\langle x,w_m\rangle =0\times 2+\sum_{n=1}^m\frac{1}{2^{n-1}}2=4\left(1-2^{-m-1}\right),$$
implying that
\begin{equation}\label{KLR-S2.2-Ex3-E1}
V(x,w_m)=4-2\times 4\left(1-2^{-m-1}\right)+4\to 0,\ \text{as}\ m\to \infty.
\end{equation}
Next, we claim that $V(x,t)>0,$ for any $t\in c_0.$ If possible, assume that the claim is false and there exists $t=(t_1,t_2,\ldots)\in c_0$ such that $V(x,t)=0$. This, due to the fact that $(\|x\|-\|t\|)^2\leq V(x,t)$, implies that $2=\|x\|=\|t\|$. Thus, $-2\leq t_n\leq 2$ for $n=1,2,\ldots$. However, since $\displaystyle \lim_{n\to \infty}t_n=0$, there are infinitely many $n$ such that $t_n<2$. Consequently,
\begin{align*}
V(x,t)&=\|x\|^2-2\langle x,t\rangle+\|t\|^2=4-2\sum_{n=1}^{\infty}\frac{1}{2^{n-1}}t_n+4> 4-2\sum_{n=1}^{\infty}\frac{1}{2^{n-1}}2+4=0,
\end{align*}
which is a contraction to the assumption $V(x,t)=0$ and hence the claim that $V(x,t)>0,$ for any $t\in c_0$ is verified. This, in conjunction with \eqref{KLR-S2.2-Ex3-E1}, yields $\pi_{c_0}(u)=\emptyset.$ The proof is thus complete.
\end{proof}
\subsection{Generalized projection $\pi$ onto convex sets}\label{KLR-S2.3}
We shall now prove some properties of the generalized projection   $\pi$ on nonempty, closed, and convex sets. We begin with the following:
\begin{pr}\label{KLR-S2.3-P1} Let $B$ be a Banach space with dual $B^*$, and let $C\subset B$ be nonempty, closed, and convex. Then, for any $\phi\in B^*$, the set $\pi_{\text{\tiny{\emph{C}}}}(\phi)$ is closed, convex, and bounded, provided that $\pi_{\text{\tiny{\emph{C}}}}(\phi)\ne \emptyset$.
\end{pr}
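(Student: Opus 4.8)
The plan is to exploit the fact that, when $\pi_{\text{\tiny{\emph{C}}}}(\phi)\neq\emptyset$, the value $m:=\inf_{y\in C}V(\phi,y)$ is actually attained, so that
$$\pi_{\text{\tiny{\emph{C}}}}(\phi)=\{u\in C:\ V(\phi,u)\le m\},$$
the point being that $V(\phi,u)\ge m$ holds automatically for every $u\in C$, hence the inequality $V(\phi,u)\le m$ is equivalent to the equality $V(\phi,u)=m$. Rewriting $\pi_{\text{\tiny{\emph{C}}}}(\phi)$ as this sublevel set of $V(\phi,\cdot)$ relative to $C$ reduces all three assertions to properties already recorded in Proposition~\ref{KLR-S2.1-P1} together with the hypotheses on $C$.

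For closedness, I would invoke Proposition~\ref{KLR-S2.1-P1}($a$): since $V(\phi,\cdot)$ is continuous, the sublevel set $\{u\in B:\ V(\phi,u)\le m\}$ is closed, and its intersection with the closed set $C$ remains closed. For convexity, I would invoke Proposition~\ref{KLR-S2.1-P1}($b$): $V(\phi,\cdot)$ is convex, so its sublevel set is convex, and intersecting with the convex set $C$ preserves convexity. For boundedness, I would use the lower estimate in Proposition~\ref{KLR-S2.1-P1}($c$): for any $u\in\pi_{\text{\tiny{\emph{C}}}}(\phi)$ we have $(\|\phi\|-\|u\|)^2\le V(\phi,u)=m$, whence $\|u\|\le\|\phi\|+\sqrt{m}$, so $\pi_{\text{\tiny{\emph{C}}}}(\phi)$ is contained in the closed ball of radius $\|\phi\|+\sqrt{m}$ centered at $\theta$.

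There is no genuine obstacle here: once the sublevel-set reformulation is in place, each of the three properties follows from exactly one item of Proposition~\ref{KLR-S2.1-P1}. The only subtlety worth spelling out is the justification that $\{u\in C:\ V(\phi,u)\le m\}$ really coincides with $\pi_{\text{\tiny{\emph{C}}}}(\phi)$, which is precisely where the standing assumption $\pi_{\text{\tiny{\emph{C}}}}(\phi)\neq\emptyset$ — ensuring that the infimum defining $m$ is attained — is used; note that convexity and closedness of $C$ are not needed for this step, only for the convexity and closedness conclusions, while boundedness in fact holds for arbitrary nonempty $C$ whenever $\pi_{\text{\tiny{\emph{C}}}}(\phi)\neq\emptyset$.
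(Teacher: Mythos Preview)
Your proof is correct and follows essentially the same approach as the paper: the paper verifies closedness via a convergent sequence (continuity of $V$), convexity via a direct convex-combination estimate (convexity of $V(\phi,\cdot)$), and boundedness via the inequality $(\|\phi\|-\|u\|)^2\le V(\phi,u)\le V(\phi,v)$ for a fixed $v\in C$. Your sublevel-set reformulation $\pi_{\text{\tiny{\emph{C}}}}(\phi)=\{u\in C:V(\phi,u)\le m\}$ is a slightly cleaner packaging of the same three ingredients from Proposition~\ref{KLR-S2.1-P1}, but the underlying argument is identical.
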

\begin{proof} For a given fixed $\phi\in B^*$, assume that $\pi_{\text{\tiny{\emph{C}}}}(\phi)\ne \emptyset.$ Let $v\in C$ be arbitrary. Then, for any $u\in \pi_{\text{\tiny{\emph{C}}}}(\phi)$, we have
$$(\|\phi\|-\|u\|)^2\leq \|\phi\|^2-2\langle \phi,u\rangle+\|u\|^2\leq \|\phi\|^2-2\langle \phi,v\rangle+\|v\|^2 \leq (\|\phi\|+\|v\|)^2,$$
and since $\phi$ and $v$ are fixed, the boundedness of the set $\pi_{\text{\tiny{\emph{C}}}}(\phi)$ is evident.

To prove that $\pi_{\text{\tiny{\emph{C}}}}(\phi)$ is closed, we choose $\{x_n\}\subset \pi_{\text{\tiny{\emph{C}}}}(\phi)$ such that $x_n\to \bar{x}$ as $n\to \infty.$ Then,
\begin{align*}
\|\phi\|^2-2\langle \phi,\bar{x}\rangle+\|\bar{x}\|^2 &=\lim_{n\to \infty}\left(\|\phi\|^2-2\langle \phi,x_n\rangle+\|x_n\|^2\right)\\
&=\lim_{n\to \infty} \inf_{y\in C}\left(\|\phi\|^2-2\langle \phi,y\rangle+\|y\|^2\right)\\
&=\inf_{y\in C}\left(\|\phi\|^2-2\langle \phi,y\rangle+\|y\|^2\right),
\end{align*}
which confirms that  $\bar{x}\in \pi_{\text{\tiny{\emph{C}}}}(\phi)$, proving the closedness of $\pi_{\text{\tiny{\emph{C}}}}(\phi).$

Finally, to prove the convexity of $\pi_{\text{\tiny{\emph{C}}}}(\phi)$, we take $x_1,x_2\in \pi_{\text{\tiny{\emph{C}}}}(\phi)$ and $0\leq \lambda\leq 1$. Then,
\begin{align*}
\|\phi\|^2-2\langle \phi,\lambda x_1&+(1-\lambda)x_2\rangle+\|\lambda x_1+(1-\lambda)x_2\|^2 \\
&\leq \lambda \left(\|\phi\|^2-2\langle \phi,x_1\rangle+\|x_1\|^2 \right)+(1-\lambda) \left(\|\phi\|^2-2\langle \phi,x_2\rangle+\|x_2\|^2 \right)\\
&=\lambda \inf_{y\in C}\left(\|\phi\|^2-2\langle \phi,y\rangle+\|y\|^2 \right)+(1-\lambda)\inf_{y\in C} \left(\|\phi\|^2-2\langle \phi,y\rangle+\|y\|^2 \right)\\
&=\inf_{y\in C}\left(\|\phi\|^2-2\langle \phi,y\rangle+\|y\|^2 \right),
\end{align*}
proving that $\lambda x_1+(1-\lambda)x_2\in \pi_{\text{\tiny{\emph{C}}}}(\phi).$ Therefore, $\pi_{\text{\tiny{\emph{C}}}}(\phi)$ is convex. The proof is complete.
\end{proof}

We next characterize the reflexivity of the Banach space $B$ using the generalized projection $\pi$.
\begin{thr}\label{KLR-S2.3-T1} A Banach space $B$ with dual $B^*$ is reflexive, if and only if, for every nonempty, closed, and convex $C\subset B$, the generalized projection $\pi_{\text{\tiny{\emph{C}}}}(\phi)$ is well defined for each $\phi\in B^*,$ that is,
$$ \pi_{\text{\tiny{\emph{C}}}}(\phi)\ne \emptyset,\quad \text{for any}\ \phi\in B^*.$$
\end{thr}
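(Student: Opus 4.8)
The plan is to establish the two implications by quite different means.

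\emph{Sufficiency of reflexivity.} Assume $B$ is reflexive and fix $\phi\in B^*$ and a nonempty, closed, convex set $C\subset B$. I would run the direct method. Picking any $y_0\in C$, the number $m:=\inf_{y\in C}V(\phi,y)$ satisfies $0\le m\le V(\phi,y_0)<\infty$, where $V\ge 0$ by Proposition~\ref{KLR-S2.1-P1}($c$). Let $\{y_n\}\subset C$ be a minimizing sequence. From the lower estimate $(\|\phi\|-\|y_n\|)^2\le V(\phi,y_n)$ of Proposition~\ref{KLR-S2.1-P1}($c$) together with $V(\phi,y_n)\to m$, the sequence $\{\|y_n\|\}$ is bounded, hence so is $\{y_n\}$. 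Reflexivity then yields a subsequence $y_{n_k}\rightharpoonup\bar y$; since $C$ is closed and convex it is weakly closed (Mazur's theorem), so $\bar y\in C$. By Proposition~\ref{KLR-S2.1-P1}($a$)--($b$), $V(\phi,\cdot)$ is convex and continuous on $B$, hence weakly lower semicontinuous, so $V(\phi,\bar y)\le\liminf_{k}V(\phi,y_{n_k})=m$. Together with $\bar y\in C$ this forces $V(\phi,\bar y)=m$, i.e.\ $\bar y\in\pi_{\text{\tiny{\emph{C}}}}(\phi)$, so $\pi_{\text{\tiny{\emph{C}}}}(\phi)\ne\emptyset$.

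\emph{Necessity of reflexivity.} Here I would invoke James' theorem: a Banach space is reflexive if and only if every bounded linear functional attains its supremum on the closed unit ball. It suffices to show that an arbitrary $f\in B^*$ with $\|f\|=1$ attains its norm. Apply the hypothesis to the zero functional $\theta\in B^*$ and to $C_f:=\{x\in B:\ \langle f,x\rangle=1\}$, which is a nonempty closed hyperplane, in particular closed and convex. Since $V(\theta,y)=\|y\|^2$, the assumed nonemptiness of $\pi_{C_f}(\theta)$ produces $\bar x\in C_f$ with $\|\bar x\|^2=\inf_{y\in C_f}\|y\|^2$. For every $x\in C_f$ we have $1=\langle f,x\rangle\le\|f\|\,\|x\|=\|x\|$, and by the definition of $\|f\|$ this infimum equals $1$; therefore $\|\bar x\|=1$ and $\langle f,\bar x\rangle=1=\|f\|\,\|\bar x\|$, so $f$ attains its norm at $\bar x$. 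As $f$ was arbitrary, James' theorem gives the reflexivity of $B$.

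\emph{Where the difficulty lies.} The forward implication is routine once Proposition~\ref{KLR-S2.1-P1} is available; the only care needed is the weak lower semicontinuity of $V(\phi,\cdot)$ and the coercivity estimate that makes a minimizing sequence bounded even when $C$ is unbounded. The substance of the theorem is the reverse implication, and the essential external tool is James' characterization of reflexivity. The one genuinely new step there is selecting the auxiliary convex set $C_f$ — a hyperplane at distance $1$ from the origin — so that a minimal-norm point of $C_f$ is forced onto the unit sphere, which is precisely what converts the hypothesis ``$\pi_{C_f}(\theta)\ne\emptyset$'' into norm attainment for $f$.
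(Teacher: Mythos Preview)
Your proof is correct. Both directions are handled cleanly, and the one slightly terse step---that $\inf_{x\in C_f}\|x\|=1$---is the standard identity $d(\theta,\{x:\langle f,x\rangle=1\})=1/\|f\|$, so it passes.

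The route, however, differs from the paper's in both directions. For the forward implication the paper simply cites \cite[Theorem~2.1]{Li04}; you supply a self-contained direct-method argument, which is a gain in completeness. For the converse, both arguments hinge on James' theorem, but the paper argues by contrapositive: assuming $B$ is non-reflexive, it picks a non-norm-attaining $\phi\in B^*$ with $\|\phi\|=1$, takes $C=S$ (the closed unit ball), and shows $V(\phi,y)>0$ for every $y\in S$ while $\inf_{y\in S}V(\phi,y)=0$, so $\pi_S(\phi)=\emptyset$. You instead argue directly: for an arbitrary unit-norm $f$, you apply the hypothesis with $\phi=\theta$ and $C=C_f$, so that $V(\theta,\cdot)=\|\cdot\|^2$ and a minimizer on the hyperplane is forced to be a norm-attaining point for $f$. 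The two choices are in a sense dual---the paper puts the functional in the first slot of $V$ and keeps the convex set fixed as the unit ball, whereas you put the functional into the convex set and use the trivial first argument $\theta$. Your version buys a shorter computation (no need to evaluate $\inf_{y\in S}V(\phi,y)$), and it is conceptually the observation $\pi_{C_f}(\theta)=P_{C_f}(\theta)$ of Proposition~\ref{KLR-S4-P1}; the paper's version has the advantage of producing an explicit failure of $\pi_C$ on a \emph{bounded} set.
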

\begin{proof} In a reflexive Banach space $B$, for any $\phi\in B^*,$ we have $\pi_{\text{\tiny{\emph{C}}}}(\phi)\ne \emptyset$, see \cite[Theorem~2.1]{Li04}. To prove the converse, we assume that the given Banach space $B$ is non-reflexive. Let $S$ be the closed unit ball and let $\partial S$ be the unit sphere in $B$. Then, $S$ is a nonempty, closed, and convex subset of $B$. By James' theorem, there exists $\phi\in B^*$ with $\|\phi\|=1$ and
\begin{equation}\label{KLR-S2.3-T1P-E1}
\langle \phi,y\rangle<1,\quad \text{for every}\ y\in \partial S.
\end{equation}
Since $\phi\in B^*$, for any $z\in S\backslash \{\theta\}$, we have  $\frac{z}{\|z\|}\in \partial S$ and hence by \eqref{KLR-S2.3-T1P-E1}, we get $\langle \phi,\frac{z}{\|z\|}\rangle<1. $ That is,
$$\langle \phi,z\rangle\leq \|z\|,\quad \text{for every}\ z\in S\backslash \{\theta\}.$$
By the above relationship and the fact that $\|\phi\|=1$, for every $z\in S\backslash \{\theta\}$, we obtain
\begin{equation}\label{KLR-S2.3-T1P-E2}
V(\phi,z)=\|\phi\|^2-2\langle \phi,z\rangle+\|z\|^2>1-2\|z\|+\|z\|^2,\quad \text{for every}\ z\in S\backslash \{\theta\}.
\end{equation}
However, we have $V(\phi,\theta)=\|\phi\|^2=1>0$. Therefore, using \eqref{KLR-S2.3-T1P-E2}, we deduce that
\begin{equation}\label{KLR-S2.3-T1P-E2-a} V(\phi,y)=\|\phi\|^2-2\langle \phi,y\rangle+\|y\|^2>0,\quad \text{for every}\ y\in S.
\end{equation}
Since $\phi\in B^*$ and since $S$ is symmetric with respect to $\theta$, we have
\begin{equation}\label{KLR-S2.3-T1P-E3}\inf_{y\in S}V(\phi,y)=\inf_{y\in S}\left(\|\phi\|^2-2\langle \phi,y\rangle+\|y\|^2 \right)=2-2\sup_{y\in S}\langle \phi,y\rangle=2-2\|\phi\|=0.
\end{equation}
We combine \eqref{KLR-S2.3-T1P-E2-a} and \eqref{KLR-S2.3-T1P-E3} to obtain $\pi_{\text{\tiny{\emph{C}}}}(\phi)=\emptyset.$ The proof is thus complete.
\end{proof}
\subsection{A Variational Characterization of the Generalized projection $\pi$}\label{KLR-S2.4}
If a Banach space $B$ is uniformly convex and uniformly smooth (or a reflexive, strictly convex and smooth), then the normalized duality mapping $J$ and the generalized projection $\pi$ are single-valued maps. In this case, the generalized projection  $\pi$ can be characterized by the so-called basic variational principle of $\pi$ (for uniformly convex and uniformly smooth Banach spaces, see Alber~\cite[Property 6. c]{Alb96}; for reflexive strictly convex and smooth Banach spaces, see Ibarakia and Takahashi~\cite{IbaTak07}). For completeness and comparison, we recall this celebrated variational principle:
\begin{thr}\label{KLR-S2.4-T1} Let $B$ be a uniformly convex and uniformly smooth Banach space with dual $B^*$ and let $C$ be a nonempty, closed, and convex subset of $B$. Then, for any $\phi\in B^*$, we have
\begin{equation}\label{KLR-S2.4-T1-E1}
z\in \pi_{\text{\tiny{\emph{C}}}}(\phi)\quad \Leftrightarrow\quad \langle \phi -Jz,z-y\rangle\geq 0,\quad\text{for all}\ y\in C.
\end{equation}
\end{thr}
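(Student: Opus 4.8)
The plan is to reduce the characterization to a standard convex-optimization argument by exploiting that, for fixed $\phi$, the map $y\mapsto V(\phi,y)$ is convex (Proposition~\ref{KLR-S2.1-P1}(b)) and, crucially, G\^ateaux differentiable. The key observation is that in a uniformly smooth Banach space the norm is G\^ateaux (indeed Fr\'echet) differentiable away from the origin, and the derivative of $x\mapsto \tfrac12\|x\|^2$ at $x$ is exactly $Jx$ (single-valued because $B$ is smooth). Hence, with $\phi$ fixed, $g(y):=V(\phi,y)=\|\phi\|^2-2\langle\phi,y\rangle+\|y\|^2$ is a convex, continuous, G\^ateaux-differentiable function on $B$ with $g'(y)=-2\phi+2Jy$. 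Since $C$ is nonempty, closed, and convex, a point $z\in C$ minimizes $g$ over $C$ if and only if the standard first-order optimality condition holds: $\langle g'(z),y-z\rangle\ge 0$ for all $y\in C$, i.e. $\langle -2\phi+2Jz,\,y-z\rangle\ge 0$, which rearranges to $\langle \phi-Jz,\,z-y\rangle\ge 0$ for all $y\in C$. This is precisely \eqref{KLR-S2.4-T1-E1}.

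In more detail, here are the steps I would carry out. First I would record that $\pi_{\text{\tiny{\emph{C}}}}(\phi)\ne\emptyset$ here: a uniformly convex space is reflexive, so Theorem~\ref{KLR-S2.3-T1} (or the cited Alber/Li results) applies, and single-valuedness follows from strict convexity, so $\pi_{\text{\tiny{\emph{C}}}}(\phi)$ really is a single point $z$; this makes the equivalence meaningful. Second, I would establish the differentiation formula $\big(\tfrac12\|\cdot\|^2\big)'(x)=Jx$ for $x\ne\theta$, using uniform smoothness to get G\^ateaux differentiability of the norm and the chain rule, handling $x=\theta$ separately (there $J\theta=\{\theta\}$ and the derivative of $\tfrac12\|\cdot\|^2$ is $0=\theta$, so the formula persists). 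Third, for the forward implication I would fix $z\in\pi_{\text{\tiny{\emph{C}}}}(\phi)$, take any $y\in C$, set $y_t=z+t(y-z)\in C$ for $t\in[0,1]$ by convexity, and use $g(y_t)\ge g(z)$ together with $\frac{d}{dt}\big|_{t=0^+}g(y_t)=\langle g'(z),y-z\rangle\ge 0$ to obtain the variational inequality. Fourth, for the converse I would use convexity of $g$: for convex G\^ateaux-differentiable $g$, $g(y)\ge g(z)+\langle g'(z),y-z\rangle$ for all $y$, so if $\langle g'(z),y-z\rangle\ge 0$ on $C$ then $g(y)\ge g(z)$ on $C$, whence $z$ minimizes $V(\phi,\cdot)$ over $C$, i.e. $z\in\pi_{\text{\tiny{\emph{C}}}}(\phi)$.

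Alternatively, and perhaps more transparently, I would give a direct two-sided estimate argument avoiding explicit differentiation: for $z,y\in C$ and $t\in(0,1]$, expand $V(\phi,z+t(y-z))-V(\phi,z)$ and divide by $t$, using the inequality $\|z+t(y-z)\|^2\le (1-t)\|z\|^2+t\|y\|^2-\text{(correction)}$ from convexity in one direction and a supporting-functional estimate $\|z+t(y-z)\|^2\ge \|z\|^2+2t\langle Jz,y-z\rangle+o(t)$ in the other, the latter coming from $\|w\|^2\ge\|z\|^2+2\langle Jz,w-z\rangle$ (a consequence of $Jz$ being a subgradient of $\tfrac12\|\cdot\|^2$ at $z$). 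Letting $t\downarrow 0$ then pins the one-sided derivative to $2\langle Jz-\phi,y-z\rangle$ and the minimality/variational-inequality equivalence drops out.

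The main obstacle is the differentiability bookkeeping: one must be careful that in a merely uniformly smooth (not Hilbert) space the derivative of $\tfrac12\|\cdot\|^2$ is $Jx$ with $J$ single-valued, that this holds at the origin too, and that the one-sided directional derivative of $g$ along the segment into $C$ genuinely equals $\langle g'(z),y-z\rangle$ (which is where uniform smoothness, rather than mere G\^ateaux smoothness, is comfortably invoked so that the limit is clean and uniform in directions). The convex-analytic parts (first-order optimality over a convex set, subgradient inequality) are routine once differentiability is in hand, and since this theorem is explicitly stated as a recollection of a known result, I would keep the write-up brief, citing Alber~\cite{Alb96} and Ibaraki--Takahashi~\cite{IbaTak07} for the original proofs while indicating the argument above for completeness.
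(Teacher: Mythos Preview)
Your argument is correct: in a uniformly smooth space $J$ is single-valued and equals the G\^ateaux derivative of $\tfrac12\|\cdot\|^2$, so $V(\phi,\cdot)$ is convex and G\^ateaux differentiable with gradient $2(J\cdot-\phi)$, and the equivalence \eqref{KLR-S2.4-T1-E1} is exactly the first-order optimality condition for minimizing a convex differentiable function over a convex set. The forward direction via the segment $y_t=z+t(y-z)$ and the backward direction via the subgradient inequality $g(y)\ge g(z)+\langle g'(z),y-z\rangle$ are both sound; your handling of $z=\theta$ is also fine.

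However, there is nothing to compare against: the paper does \emph{not} prove Theorem~\ref{KLR-S2.4-T1}. It is presented explicitly as a recalled result (``For completeness and comparison, we recall this celebrated variational principle''), with the proof attributed to Alber~\cite[Property~6.c]{Alb96} and, in the reflexive strictly convex smooth setting, to Ibaraki and Takahashi~\cite{IbaTak07}. The only related argument the paper actually writes out is the one-sided computation in Theorem~\ref{KLR-S2.4-T2} (sufficiency in a general Banach space), which uses the inequality $\|y\|^2+\|jz\|^2\ge 2\langle jz,y\rangle$ rather than differentiability; your ``alternative direct estimate'' paragraph is essentially that computation. So your proposal is more than the paper offers, and your closing remark---keep it brief and cite \cite{Alb96,IbaTak07}---matches exactly what the paper does.
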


For general Banach spaces, for the generalized projection $\pi$ defined in \eqref{KLR-S2.1-D1-E1}, the basic variational principle fails to hold. More precisely, in general, such a variational characterization is only a sufficient condition (see Theorem~\ref{KLR-S2.4-T2} below) and not a necessary condition (see Example~\ref{KLR-S2.4-Ex1}).
\begin{thr}\label{KLR-S2.4-T2}  Let $B$ be a Banach space with dual $B^*$ and let $C$ be a nonempty closed subset of $B$. Then, for any $\phi\in B^*$, if there is $jz\in Jz$ such that
\begin{equation}\label{KLR-S2.4-T2-E1}
\langle \phi-jz,z-y\rangle \geq 0,\quad \text{for all}\ y\in C,
\end{equation}
then $z\in \pi_{\text{\tiny{\emph{C}}}}(\phi)$
\end{thr}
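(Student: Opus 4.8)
The plan is to verify the defining inequality of the generalized projection directly: assuming \eqref{KLR-S2.4-T2-E1}, and (as is tacitly understood in the statement) that $z\in C$, I want to show that $V(\phi,z)\le V(\phi,y)$ for every $y\in C$, which by Definition~\ref{KLR-S2.1-D1} is precisely the assertion $z\in\pi_{\text{\tiny{\emph{C}}}}(\phi)$. First I would unwind $V$ via \eqref{KLR-S2.1-E1}: after cancelling the term $\|\phi\|^2$ common to both sides, the inequality $V(\phi,z)\le V(\phi,y)$ is seen to be equivalent to
\begin{equation*}
2\langle\phi,y-z\rangle\le\|y\|^2-\|z\|^2,\qquad\text{for all}\ y\in C,
\end{equation*}
so it is enough to establish this last estimate.

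To do so, I would rewrite the hypothesis \eqref{KLR-S2.4-T2-E1} in the form $\langle\phi,y-z\rangle\le\langle jz,y-z\rangle$ for all $y\in C$, and then invoke the two properties that characterize $jz\in Jz$, namely $\langle jz,z\rangle=\|z\|^2$ and $\|jz\|=\|z\|$. These give, for every $y\in C$,
\begin{equation*}
\langle\phi,y-z\rangle\le\langle jz,y\rangle-\langle jz,z\rangle\le\|jz\|\,\|y\|-\|z\|^2=\|z\|\,\|y\|-\|z\|^2.
\end{equation*}
Applying the elementary bound $\|z\|\,\|y\|\le\tfrac12\big(\|y\|^2+\|z\|^2\big)$ then yields $\langle\phi,y-z\rangle\le\tfrac12\big(\|y\|^2-\|z\|^2\big)$, and multiplying by $2$ produces exactly the estimate isolated in the first step. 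Hence $V(\phi,z)\le V(\phi,y)$ for all $y\in C$, as required.

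As for difficulty, there is really no serious obstacle here: the whole argument is a short chaining of the Cauchy--Schwarz-type bound $\langle jz,y\rangle\le\|jz\|\,\|y\|$ for the duality pairing with the arithmetic--geometric mean inequality, and it uses neither convexity nor closedness of $C$ nor any structural hypothesis on $B$. The only points worth flagging are, first, the implicit requirement $z\in C$ (without which $z\in\pi_{\text{\tiny{\emph{C}}}}(\phi)$ would be meaningless), and second---of greater conceptual importance---that only this one implication survives in a general Banach space: unlike the equivalence in Theorem~\ref{KLR-S2.4-T1} available in uniformly convex and uniformly smooth spaces, the converse direction genuinely fails here, which is the role of Example~\ref{KLR-S2.4-Ex1}.
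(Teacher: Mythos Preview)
Your proof is correct and follows essentially the same route as the paper's: both arguments expand $V(\phi,y)-V(\phi,z)$, invoke the defining relations $\langle jz,z\rangle=\|z\|^2$ and $\|jz\|=\|z\|$, apply the duality bound $\langle jz,y\rangle\le\|jz\|\,\|y\|$, and finish with the AM--GM inequality $2\|z\|\,\|y\|\le\|z\|^2+\|y\|^2$. The only difference is cosmetic---the paper writes the chain as $V(\phi,y)-V(\phi,z)\ge\cdots\ge 2\langle\phi-jz,z-y\rangle\ge 0$, whereas you first isolate the equivalent target inequality $2\langle\phi,y-z\rangle\le\|y\|^2-\|z\|^2$ and then bound it---and your observations about the tacit assumption $z\in C$ and the failure of the converse are accurate.
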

\begin{proof} Assume that there is $jz\in Jz$ such that \eqref{KLR-S2.4-T2-E1} holds. Then, for all $y\in C,$ we have
\begin{align*}
V(\phi,y)-V(\phi,z)&=\left(\|\phi\|^2-2\langle \phi,y\rangle+\|y\|^2\right)-\left(\|\phi\|^2-2\langle \phi,z\rangle+\|z\|^2\right)\\
&=-2\langle \phi,y\rangle+\|y\|^2+2\langle \phi,z\rangle-\|z\|^2\\
&=-2\langle \phi,y\rangle+2\langle \phi,z\rangle-2\langle jz,z\rangle+\|jz\|^2+\|y\|^2\\
&\geq  -2\langle \phi,y\rangle+2\langle \phi,z\rangle-2\langle jz,z\rangle+2\|jz\|\|y\|\\
&\geq -2\langle \phi,y\rangle+2\langle \phi,z\rangle-2\langle jz,z\rangle+2\langle jz,y\rangle\\
&=2\langle \phi-jz,z-y \rangle\geq 0,
\end{align*}
and hence $V(\phi,z)\leq V(\phi,y)$, for all $y\in C.$ It follows that $z\in \pi_{\text{\tiny{\emph{C}}}}(\phi),$ which completes the proof.
\end{proof}

The next example shows that the converse of Theorem~\ref{KLR-S2.4-T2} is false.
\begin{ex}\label{KLR-S2.4-Ex1}  Let $\gamma=(3,1,0,0,\ldots)\in \ell_1^*=\ell_{\infty}$ and $z=(1,0,0,\ldots)\in D\subset T\subset \ell_1$, see \eqref{KLR-S1.1-E1}. Then:
\begin{description}
\item[($a$)] $z\in\pi_{\text{\tiny{\emph{T}}}}(\gamma)$.
\item[($b$)] There are some $\bar{y}\in T$ such that
$$\langle \gamma-jz,z-\bar{y}\rangle <0,\quad \text{for every}\ jz\in Jz.$$
\end{description}
\end{ex}
\begin{proof} We begin with a proof of ($a$). From $z=(1,0,0,\ldots)\in D\subset T$, we  compute
$$V(\gamma,z)=\|\gamma\|^2-2\langle \gamma,z\rangle +\|z\|^2=9-2\times 3+1=4.$$
In the following proof, we repeatedly use the property of $T$: $\|y\|\geq 1$ for any $y=(y_1,y_2,\ldots)\in T.$ For any $y=(y_1,y_2,\ldots)\in T$, we have
\begin{equation}\label{KLR-S2.4-Ex1-E1}
V(\gamma,y)=\|\gamma\|^2-2\langle \gamma,y\rangle+\|y\|^2=9-2(3y_1+y_2)+\|y\|^2.
\end{equation}
We shall now study three cases:\\
\textbf{Case 1.} $3y_1+y_2=3.$ From \eqref{KLR-S2.4-Ex1-E1}, we have
$$V(\gamma,y)=9-2(3y_1+y_2)+\|y\|^2=9-6+\|y\|^2\geq 4.$$
\textbf{Case 2.} $3y_1+y_2<3.$ Using \eqref{KLR-S2.3-T1P-E3}, we obtain
$$V(\gamma,y)=9-2(3y_1+y_2)+\|y\|^2>9-6+\|y\|^2\geq 4.$$
\textbf{Case 3.} $3y_1+y_2>3.$ In this case, we set $3y_1+y_2=a>3$. Since $\displaystyle \sum_{n=1}^{\infty}y_n=1$, we have
$$y_1+y_2=a-2y_1,\quad y_2=a-3y_1,\quad \text{and}\quad \sum_{n=3}^{\infty}y_n=1-a+2y_1.$$
By substituting the above equation into the expression for $V(\gamma,y)$, we get
\begin{align}
V(\gamma,y)&=9-2(3y_1+y_2)+\|y\|^2=9-2a+(|y_1|+|y_2|+\sum_{n=3}^{\infty}|y_n|)^2\notag\\
&\geq 9-2a+\left(|y_1|+|a-3y_1|+|1-a+2y_1|\right)^2.\label{KLR-S2.4-Ex1-E2}
\end{align}

We will now divide the case $3y_1+y_2=a>3$ into four subcases with respect to $y_1$. \\
\textbf{Subcase 3.1.} $y_1\leq 0.$ By \eqref{KLR-S2.4-Ex1-E2} and $a> 3,$ we have
\begin{align*}
V(\gamma,y)&\geq 9-2a+(|y_1|+|a-3y_1|+|1-a+2y_1|)^2= 9 - 2a + (-y_1 + a-3y_1 + a- 2y_1-1)^2\\
&= 9 - 2a + (2a-6y_1 -1)^2\geq 9 - 2a + (2a -1)^2\geq 10 - 6a + 4a^2>28
\end{align*}
\textbf{Subcase 3.2.} $0<y_1\leq \frac{a}{3}.$ In this case, by $a>3$, we have $\frac{a}{3}<\frac{a}{2}-\frac12$. It follows that $a-3y_1\geq 0$ and $a-2y_1-1>0.$ By \eqref{KLR-S2.4-Ex1-E2} and $a>3$, we have
\begin{align*}
V(\gamma,y)&\geq 9-2a+(|y_1|+|a-3y_1|+|1-a+2y_1|)^2= 9 - 2a + (y_1 + a-3y_1 + a- 2y_1-1)^2\\
&= 9 - 2a + (2a-4y_1 -1)^2\geq 9 - 2a + \left(2a -\frac{4a}{3}-1\right)^2\\
&=9 - 2a + \left(\frac{2a}{3}-1\right)^2 = 10 - \frac{10a}{3} + \frac{4a^2}{9}> 4.
\end{align*}
\textbf{Subcase 3.3.} $\displaystyle \frac{a}{3}<y_1\leq \frac{a}{2}-\frac12$. In this case, by $a>3$ and $\displaystyle \frac{a}{3}< \frac{a}{2}-\frac12,$ it follows that $a-3y_1<0$ and $a-2y_1-1\geq 0.$ By \eqref{KLR-S2.4-Ex1-E2} and $a>3$, we have
\begin{align*}
V(\gamma,y)&\geq 9-2a+(|y_1|+|a-3y_1|+|1-a+2y_1|)^2= 9 - 2a + (y_1 + 3y_1-a + a-2y_1-1)^2\\
&= 9 - 2a + (2y_1 -1)^2> 9 - 2a + \left(\frac{2a}{3}-1\right)^2= 10 - \frac{10a}{3} + \frac{4a^2}{3}>4.
\end{align*}
\textbf{Subcase 3.4.} $y_1>\frac{a}{2}-\frac12.$ We have $a-3y_1<0$ and $a-2y_1-1<0.$ By \eqref{KLR-S2.4-Ex1-E2} and $a>3$, we get
\begin{align*}
V(\gamma,y)&\geq 9-2a+(|y_1|+|a-3y_1|+|1-a+2y_1|)^2= 9 - 2a + (y_1 + 3y_1-a - a+ 2y_1+1)^2\\
&= 9 - 2a + (-2a+6y_1 +1)^2> 9 - 2a + \left(-2a+6\left(\frac{a}{2}-\frac12\right)+1\right)^2\\
&= 9 - 2a + (a -2)^2= 13 - 6a + a^2>4.
\end{align*}
Using $V(\gamma,z)=4$ and combining all the cases above, we obtain $z\in \pi_{\text{\tiny{\emph{T}}}}(\gamma)$.

Next, we show that $z=\pi_{\text{\tiny{\emph{T}}}}(\gamma)$, that is, it is a singleton. Suppose that $y=(y_1,y_2,\ldots)\in \pi_{\text{\tiny{\emph{T}}}}(\gamma)$. Then, from the proofs of the three cases above, we have that $V(\gamma,y)\geq 4.$ Since $V(\gamma,z)=4$, we must have $V(\gamma,y)=4$. Recall that in Cases~2 and 3 above, $V(\gamma,y)>4$. Hence $y$ must be in Case~1 and so it satisfies
$$3y_1+y_2=3,\quad \text{and}\quad \|y\|=\sum_{n=1}^{\infty}|y_n|=1.$$
Since $\displaystyle \sum_{n=1}^{\infty}y_n=1$, we have $0\leq y_n\leq 1$ for all $n$. Then, combining $3y_1+y_2=3$ and $y_1+y_2\leq 1$, we deduce that the only solution is $y_1=1$ and $y_n=0$ for all $n>1.$ That is, $y=z.$

Now we proceed to prove ($b$). By $z=(1,0,0,\ldots)\in \ell_1$ and $\|z\|=1$, we have
$$Jz=\{(1,\lambda_2,\lambda_3,\lambda_4,\ldots)\in \ell_{\infty}|\ -1\leq \lambda_n\leq 1,\ \quad \text{for}\ n=2,3,4,\ldots\}. $$
For any $jz=(1,\lambda_2,\lambda_3,\lambda_4,\ldots)\in Jz$ and for any $y=(y_1,y_2,\ldots)\in T$, we have
$$\langle \gamma-jz,z-y\rangle=2(1-y_1)+(1-\lambda_2)(-y_2)+(-\lambda_3)(-y_3)+(-\lambda_4)(-y_4)+\cdots$$
In particular, for any given $k>0$, by taking $\bar{y}=(1+k,0,-k,0,0,\ldots)\in T$, by $-1\leq \lambda_n\leq 1$, for $n=2,3,4,\ldots$, we have
\begin{align*}\langle \gamma-jz,z-\bar{y}\rangle&=2(1-(1+k))+(-\lambda_3)(-(-k))\leq -k<0,
\end{align*}
for any $jz=(1,\lambda_2,\lambda_3,\lambda_4,\ldots)\in Jz,$ which proves the claim.\end{proof}
\section{The Generalized Metric Projection $\Pi$}\label{KLR-S3}
\subsection{The Notion of the Generalized Metric Projection}\label{KLR-S3.1}
If $B$ is a Banach space but not a Hilbert space, the generalized projection $\pi_{\text{\tiny{\emph{C}}}}$ is a map from the dual space $B^*$ to a closed and convex set $C\subset B$. In this setting, $\pi_{\text{\tiny{\emph{C}}}}$ is entirely different from the metric projection $P_{\text{\tiny{\emph{C}}}}$, which is a map from $B$ onto $C$. Inspired by this discrepancy, in the following discussion we introduce a new generalized metric projection that maps a general Banach space $B$ onto $C$ and compare it to the metric projection $P_{\text{\tiny{\emph{C}}}}$. An analog of this notion was introduced in \cite{Alb96} for reflexive, strictly convex and smooth Banach spaces.
\begin{defn}\label{KLR-S3.1-D1} Let $B$ be a Banach space with dual $B^*$, let $C\subset B$ be nonempty, and let $J:B\to 2^{B^*}\backslash \{\theta\}$ be the  normalized duality map.  Define a set-valued map $\Pi_{\text{\tiny{\emph{C}}}}:B\to 2^C$ by
\begin{equation}\label{KLR-S3.1-D1-E1}
\Pi_{\text{\tiny{\emph{C}}}}(x)=\cup_{jx\in Jx}\pi_{\text{\tiny{\emph{C}}}}(jx),\quad \text{for any}\ x\in B.
\end{equation}
The set-valued mapping $\Pi_{\text{\tiny{\emph{C}}}}:B\to 2^C$ is called the generalized metric projection from $B$ to its subsets. If $\Pi_{\text{\tiny{\emph{C}}}}\ne \emptyset$, then every member of it is called a generalized metric projection of $x$ onto $C$.
\end{defn}

If $B$ is a Hilbert space and is identified with its dual space, $B^* = B$ with $J=I_{\text{\tiny{\emph{B}}}}$, (see the property ($J_2$) of the normalized duality mapping $J$ in the Appendix), then
$$V(Jx,y)=\|x-y\|^2,\quad \text{for any}\ x,y\in B.$$
Then, both $\pi_{\text{\tiny{\emph{C}}}}$ and $\Pi_{\text{\tiny{\emph{C}}}}$ coincide with the metric projection $P_{\text{\tiny{\emph{C}}}}$. Therefore, both the generalized projection $\pi_{\text{\tiny{\emph{C}}}}$ and the generalized metric projection $\Pi_{\text{\tiny{\emph{C}}}}$ are extensions of the metric projection  $P_{\text{\tiny{\emph{C}}}}$ from Hilbert spaces to Banach spaces. That is,
\begin{equation}\label{KLR-S3.1-E1}\pi_{\text{\tiny{\emph{C}}}}=\Pi_{\text{\tiny{\emph{C}}}}=P_{\text{\tiny{\emph{C}}}},\quad \text{if}\ B\ \text{is a Hilbert space and}\ C\subset B.
\end{equation}
Alber and Li~\cite{AlbLi07} explored the connections between the metric projection $P_{\text{\tiny{\emph{C}}}}$ and the generalized metric projection $\Pi_{\text{\tiny{\emph{C}}}}$, and by means of concrete examples in $\ell_p$, for $1<p<\infty$, showed that even in uniformly convex and uniformly smooth Banach spaces the two notions are different. Hence,
\begin{equation}\label{KLR-S3.1-E2}\pi_{\text{\tiny{\emph{C}}}} \ne \Pi_{\text{\tiny{\emph{C}}}},\quad \pi_{\text{\tiny{\emph{C}}}} \ne P_{\text{\tiny{\emph{C}}}},\quad \Pi_{\text{\tiny{\emph{C}}}} \ne P_{\text{\tiny{\emph{C}}}},\quad \text{if}\ B\ \text{is not a Hilbert space and}\ C\subset B.
\end{equation}
\begin{rem}On a smooth reflexive Banach space $B$, the generalized metric projection $\Pi$ coincides with the so-called Bregman projection, see \cite{Rei96}.
\end{rem}
\subsection{Generalized proximal sets and generalized Chebyshev sets}\label{KLR-S3.2}
In the optimization theory in Banach spaces, the metric projection  $P_{\text{\tiny{\emph{C}}}}:B\to 2^C$ has played a crucial role. One of the most critical issues in this field is finding the conditions that ensure that for the given Banach space $B$ and a set $C\subset B$, for every $x\in B,$ the set $P_{\text{\tiny{\emph{C}}}}(x)$ is nonempty and/or a singleton. The important notions of the proximal sets and the Chebyshev sets have been heavily explored to provide such conditions.

In the following discussion, to address similar questions for the generalized metric projection $\Pi_C$, we propose generalized proximal sets and generalized Chebyshev sets.
\begin{defn}\label{KLR-S3.2-D1} Let $B$ be a Banach space with dual $B^*$ and let $C$ be a nonempty subset of $B$. If
$$\Pi_{\text{\tiny{\emph{C}}}}(x)\ne \emptyset,\quad \text{for any}\ x\in B,$$
then $C$ is called a generalized proximal subset of $B$ for $\Pi_{\text{\tiny{\emph{C}}}}.$ Furthermore, if for any
$ x\in B$, $\Pi_{\text{\tiny{\emph{C}}}}(x)$ is a singleton, then $C$ is called a generalized Chebyshev subset of $B$ for $\Pi_{\text{\tiny{\emph{C}}}}.$
\end{defn}

It is of evident interest to study the structure of $\Pi_c(x)$, for $x\in B$, provided that $C$ is generalized proximal. However, before we embark on that issue, we will study the existence problem. We first construct a subset of a non-reflexive Banach space that fails to be generalized proximal.
\begin{ex}\label{KLR-S3.2-Ex1} For any positive integer $n$, we define $e_n\in \ell_1$ such that its $n$-th entry is $\frac{n+1}{n}$ and all other entries are $0$. Let $C=\overline{\text{co}}\{e_1,e_2,\ldots,e_n,\ldots\}.$ Then $C$ is a nonempty, closed, and convex subset of $\ell_1$ and $\Pi_{\text{\tiny{\emph{C}}}}(\theta)=\emptyset.$ Consequently, $C$ is not generalized proximal for $\Pi_{\text{\tiny{\emph{C}}}}.$
\end{ex}
\begin{proof} By the property ($J_3)$ of the duality map, we have $J\theta=\{\theta\}.$ By \eqref{KLR-S2.1-D1-E1} and \eqref{KLR-S3.1-D1-E1}, we obtain
\begin{align}
\Pi_{\text{\tiny{\emph{C}}}}(\theta)&=\pi_{\text{\tiny{\emph{C}}}}(\theta)=\{u\in C|\ \|\theta\|^2-2\langle \theta,u\rangle+\|u\|^2=\inf_{y\in C}\left(\|\theta\|^2-2\langle \theta,y\rangle+\|y\|^2\right)\}\notag\\
&=\{u\in C|\ \|u\|^2=\inf_{y\in C}\|y\|^2\}=\{u\in C|\ \|u\|=\inf_{y\in C}\|y\|\}.\label{KLR-S3.2-Ex1-E1}
\end{align}
For every $y\in C=\overline{\text{co}}\{e_1,e_2,\ldots,e_n,\ldots\}$, there is a sequence $\{\alpha_n\}\subset [0,1]$ with $\displaystyle \sum_{n=1}^{\infty}\alpha_n=1$ such that $\displaystyle y=\sum_{n=1}^{\infty} \alpha_ne_n$. Let $i$ be the smallest positive integer with $\alpha_i>0.$ Then, we have
\begin{align*}
\|y\|&=\sum_{n=1}^{\infty}\alpha_n\frac{n+1}{n}=\alpha_i\frac{i+1}{i}+\sum_{n\ne i}\alpha_n\frac{n+1}{n}\geq \alpha_i+\frac{\alpha_i}{i}+\sum_{n\neq i}\alpha_n=\frac{\alpha_i}{i}+\sum_{n=1}^{\infty}\alpha_n=\frac{\alpha_i}{i}+1>1,
\end{align*}
which leads to
\begin{equation}\label{KLR-S3.2-Ex1-E2}
V(\theta,y)=\|y\|>1,\quad \text{for all}\ y\in C.
\end{equation}
On the other hand, for $\{e_1,e_2,\ldots,e_n,\ldots\}\subset C$, we have
$$\lim_{n\to \infty}\|e_n\|=\lim_{n\to \infty}\frac{n+1}{n}=1,$$
which implies that
\begin{equation}\label{KLR-S3.2-Ex1-E3}
\inf_{y\in C}\left(\|\theta\|^2-2\langle \theta,y\rangle+\|y\|^2 \right)=\inf_{y\in C}\|y\|^2=1.
\end{equation}
By \eqref{KLR-S3.2-Ex1-E1}, combining \eqref{KLR-S3.2-Ex1-E2} and \eqref{KLR-S3.2-Ex1-E3}, we see that $\Pi_{\text{\tiny{\emph{C}}}}(\theta)=\emptyset.$
\end{proof}

Example~\ref{KLR-S3.2-Ex1} shows a nonempty closed and convex subset of $\ell_1$ which is not generalized proximal. This happens because of the lack of reflexivity of the Banach space $\ell_1$. The reflexivity of a Banach space is used later to prove the generalized proximal property.
\begin{thr}\label{KLR-S3.2-T1} Let $B$ be a Banach space with dual $B^*$ and let $C$ be a nonempty, closed, and convex subset of $B$. For any $x\in B$, if there exists $jx\in Jx$ such that $\pi_{\text{\tiny{\emph{C}}}}(jx)\ne \emptyset$, then $\Pi_{\text{\tiny{\emph{C}}}}(x)$ is a union of nonempty, closed, convex, and bounded subsets of $C$ and $\Pi_{\text{\tiny{\emph{C}}}}(x)$ is also bounded.
\end{thr}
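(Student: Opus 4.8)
The plan is to decompose $\Pi_{\text{\tiny{\emph{C}}}}(x)$ along the fibers of the duality map $Jx$, apply Proposition~\ref{KLR-S2.3-P1} to each fiber separately, and then extract a norm bound that is uniform across the fibers. By definition, $\Pi_{\text{\tiny{\emph{C}}}}(x)=\cup_{jx\in Jx}\pi_{\text{\tiny{\emph{C}}}}(jx)$. The hypothesis guarantees that at least one $jx_0\in Jx$ has $\pi_{\text{\tiny{\emph{C}}}}(jx_0)\ne\emptyset$, so the index family $\{jx\in Jx:\ \pi_{\text{\tiny{\emph{C}}}}(jx)\ne\emptyset\}$ is nonempty and $\Pi_{\text{\tiny{\emph{C}}}}(x)$ equals the union of $\pi_{\text{\tiny{\emph{C}}}}(jx)$ over this family. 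For each such $jx$, Proposition~\ref{KLR-S2.3-P1}, applied with $\phi=jx$ (recall $C$ is nonempty, closed, and convex), shows that $\pi_{\text{\tiny{\emph{C}}}}(jx)$ is nonempty, closed, convex, and bounded. This already establishes that $\Pi_{\text{\tiny{\emph{C}}}}(x)$ is a union of nonempty, closed, convex, and bounded subsets of $C$.

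It then remains only to show that the union itself is bounded, and for this I would produce a bound on $\|u\|$ uniform over $u\in\pi_{\text{\tiny{\emph{C}}}}(jx)$ and over $jx\in Jx$. The crucial observation is that every $jx\in Jx$ satisfies $\|jx\|_{\text{\tiny{\emph{B}}}^*}=\|x\|_{\text{\tiny{\emph{B}}}}$, directly from the definition of the normalized duality map. Fixing any $v\in C$ (which exists since $C\ne\emptyset$), for any $jx\in Jx$ with $\pi_{\text{\tiny{\emph{C}}}}(jx)\ne\emptyset$ and any $u\in\pi_{\text{\tiny{\emph{C}}}}(jx)$, I would reuse the chain of inequalities from the proof of Proposition~\ref{KLR-S2.3-P1}: the lower bound from part ($c$) of Proposition~\ref{KLR-S2.1-P1}, the minimality of $V(jx,\cdot)$ at $u$, and the upper bound from part ($c$) of Proposition~\ref{KLR-S2.1-P1}, giving $(\|jx\|-\|u\|)^2\le(\|jx\|+\|v\|)^2$, hence $\|u\|\le 2\|jx\|+\|v\|=2\|x\|+\|v\|$. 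Since this estimate depends on neither $jx$ nor $u$, it follows that $\Pi_{\text{\tiny{\emph{C}}}}(x)\subseteq\{w\in C:\ \|w\|\le 2\|x\|+\|v\|\}$, so $\Pi_{\text{\tiny{\emph{C}}}}(x)$ is bounded.

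I do not expect any serious obstacle here: the single point worth emphasizing is that the norm estimate on a generalized projection furnished by Proposition~\ref{KLR-S2.3-P1} becomes \emph{uniform} once the functional $\phi$ is restricted to the fiber $Jx$, precisely because all functionals in $Jx$ have the common norm $\|x\|$. One should also keep in mind the (vacuous) possibility that some fibers $\pi_{\text{\tiny{\emph{C}}}}(jx)$ are empty; these contribute nothing to the union and require no separate treatment, while the closedness and convexity claims are asserted only for the individual (nonempty) fibers, not for the union $\Pi_{\text{\tiny{\emph{C}}}}(x)$ itself, which in general need be neither closed nor convex.
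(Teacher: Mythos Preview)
Your proof is correct and follows essentially the same route as the paper: decompose $\Pi_{\text{\tiny{\emph{C}}}}(x)$ into the fibers $\pi_{\text{\tiny{\emph{C}}}}(jx)$, invoke Proposition~\ref{KLR-S2.3-P1} on each, and then obtain the uniform bound via $(\|jx\|-\|u\|)^2\le V(jx,u)\le V(jx,v)\le(\|jx\|+\|v\|)^2$ together with $\|jx\|=\|x\|$. Your treatment is in fact slightly more careful than the paper's in explicitly restricting the union to nonempty fibers and in extracting the concrete bound $\|u\|\le 2\|x\|+\|v\|$.
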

\begin{proof} The first part follows immediately from Theorem~\ref{KLR-S2.3-T1} and Definition~\ref{KLR-S3.1-D1}. To prove that $\Pi_{\text{\tiny{\emph{C}}}}$ is bounded, we take an arbitrary fixed point $v\in C$. Then, for any $u\in \Pi_{\text{\tiny{\emph{C}}}}(x)$ with $u\in \pi_{\text{\tiny{\emph{C}}}}(jx)$, for some $jx\in Jx$, we have
$$(\|jx\|-\|u\|)^2\leq V(jx,u)\leq V(jx,v)\leq (\|jx\|+\|v\|)^2=(\|x\|+\|v\|)^2,\quad \text{for any}\ u\in \Pi_{\text{\tiny{\emph{C}}}}(x).$$
Since $x$ and $v$ are fixed, and $\|jx\|=\|x\|$, for any $jx\in Jx$, we deduce that $\Pi_{\text{\tiny{\emph{C}}}}(x)$ is bounded. \end{proof}

In the following result, we point out an implication of the reflexivity of a Banach space.
\begin{thr}\label{KLR-S3.2-T2} Let $B$ be a Banach space.
\begin{description}
\item[($a$)] If the space $B$ is reflexive, then every nonempty, closed, and convex subset $C$ of $B$ is generalized proximal for $\Pi_{\text{\tiny{\emph{C}}}}$
\item[($b$)] If the space $B$ is not reflexive, then there may be a nonempty, closed, and convex subset $C$ of $B$ that is not generalized proximal for $\Pi_{\text{\tiny{\emph{C}}}}$.
\end{description}
\end{thr}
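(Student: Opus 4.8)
The plan is to derive both parts directly from results already in hand, so the argument is short.

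For part ($a$), I would start from the hypothesis that $B$ is reflexive and invoke Theorem~\ref{KLR-S2.3-T1}: reflexivity guarantees that for every nonempty, closed, and convex $C\subset B$ we have $\pi_{\text{\tiny{\emph{C}}}}(\phi)\ne\emptyset$ for \emph{every} $\phi\in B^*$. Now fix an arbitrary $x\in B$. Since the normalized duality map is nonempty-valued on all of $B$ (a Hahn--Banach consequence), pick any $jx\in Jx$. Then $\pi_{\text{\tiny{\emph{C}}}}(jx)\ne\emptyset$, and by Definition~\ref{KLR-S3.1-D1} we have $\pi_{\text{\tiny{\emph{C}}}}(jx)\subseteq\Pi_{\text{\tiny{\emph{C}}}}(x)$, whence $\Pi_{\text{\tiny{\emph{C}}}}(x)\ne\emptyset$. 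As $x$ was arbitrary, $C$ is generalized proximal for $\Pi_{\text{\tiny{\emph{C}}}}$.

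For part ($b$) I would simply produce a concrete witness, and Example~\ref{KLR-S3.2-Ex1} already supplies one: the space $\ell_1$ is not reflexive, and the nonempty, closed, and convex set $C=\overline{\text{co}}\{e_1,e_2,\ldots\}$ constructed there satisfies $\Pi_{\text{\tiny{\emph{C}}}}(\theta)=\emptyset$; hence $C$ is not generalized proximal for $\Pi_{\text{\tiny{\emph{C}}}}$. I would add one sentence recalling why that computation reduces $\Pi_{\text{\tiny{\emph{C}}}}(\theta)$ to $\pi_{\text{\tiny{\emph{C}}}}(\theta)$, namely $J\theta=\{\theta\}$, so the union defining $\Pi_{\text{\tiny{\emph{C}}}}$ collapses to a single term.

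A remark on why part ($b$) cannot be packaged as a clean converse of ($a$) through Theorem~\ref{KLR-S2.3-T1}: read contrapositively, that theorem only yields, for non-reflexive $B$, some closed convex $C$ and some $\phi\in B^*$ with $\pi_{\text{\tiny{\emph{C}}}}(\phi)=\emptyset$; but to conclude $\Pi_{\text{\tiny{\emph{C}}}}(x)=\emptyset$ for some $x$ one needs $\phi$ — and indeed every element of $Jx$ — to lie in the range of the duality map, and that map is surjective onto $B^*$ precisely when $B$ is reflexive (James' theorem). This gap is exactly what the explicit $\ell_1$ example closes by taking $x=\theta$, where $Jx$ is the singleton $\{\theta\}$. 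So the only real content of the proof is recalling that example; there is no computational obstacle, and I expect no difficulty beyond citing the right earlier statements.
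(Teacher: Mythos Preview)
Your proposal is correct and mirrors the paper's proof almost verbatim: for ($a$) the paper cites property $(J_1)$ to get $Jx\ne\emptyset$, applies Theorem~\ref{KLR-S2.3-T1} to obtain $\pi_{\text{\tiny{\emph{C}}}}(jx)\ne\emptyset$, and concludes via Definition~\ref{KLR-S3.1-D1}; for ($b$) it simply cites Example~\ref{KLR-S3.2-Ex1}. Your added remark on why the contrapositive of Theorem~\ref{KLR-S2.3-T1} does not directly yield ($b$) is a nice clarification the paper omits.
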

\begin{proof} ($a$) Let the Banach space $B$ be reflexive and let $C$ be an arbitrary nonempty, closed, and convex subset of $B$. For any given $x\in B$, by property ($J_1$) of the normalized duality map, $J(x)$ is a nonempty, bounded, closed and convex subset of $B^*$. Then, for any $jx\in Jx$, from the first part of Theorem~\ref{KLR-S2.3-T1}, $\pi_{\text{\tiny{\emph{C}}}}(jx)\ne \emptyset$. By Definition~\ref{KLR-S3.1-D1}, it follows that $\Pi_{\text{\tiny{\emph{C}}}}(x)\ne \emptyset$, for any $x\in B$. This proves that $C$ is generalized proximal for $\Pi_{\text{\tiny{\emph{C}}}}$. The second part follows from Example~\ref{KLR-S3.2-Ex1}.
\end{proof}
\begin{co}\label{KLR-S3.2-C1}  Let $B$ be a reflexive and smooth Banach space and let $C\subset B$ be nonempty, closed, and convex. Then, for every  $x\in B$, $\Pi_{\text{\tiny{\emph{C}}}}(x)$ is a nonempty, closed, convex, and bounded subsets of $C$.
\end{co}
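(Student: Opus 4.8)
The plan is to deduce Corollary~\ref{KLR-S3.2-C1} by combining Theorem~\ref{KLR-S3.2-T2}(a), Theorem~\ref{KLR-S3.2-T1}, and the extra hypothesis of smoothness, which forces the duality map $J$ to be single-valued. First I would observe that since $B$ is reflexive, Theorem~\ref{KLR-S3.2-T2}(a) applies directly: every nonempty, closed, and convex subset $C$ of $B$ is generalized proximal for $\Pi_{\text{\tiny{\emph{C}}}}$, so $\Pi_{\text{\tiny{\emph{C}}}}(x)\ne\emptyset$ for every $x\in B$. In particular, for each $x\in B$ there exists $jx\in Jx$ with $\pi_{\text{\tiny{\emph{C}}}}(jx)\ne\emptyset$ (indeed, by the reflexivity argument inside the proof of Theorem~\ref{KLR-S3.2-T2}(a), this holds for \emph{every} $jx\in Jx$). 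Hence the hypothesis of Theorem~\ref{KLR-S3.2-T1} is met, and that theorem tells us $\Pi_{\text{\tiny{\emph{C}}}}(x)$ is a union of nonempty, closed, convex, and bounded subsets of $C$, and is itself bounded.

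The remaining point is to upgrade ``union of nonempty, closed, convex, bounded sets'' to ``a single nonempty, closed, convex, bounded set.'' This is precisely where smoothness enters. I would recall the standard fact (property $(J_2)$ / the characterization of smoothness in the Appendix, or simply that smoothness of $B$ is equivalent to Gâteaux differentiability of the norm, which makes $J$ single-valued) that on a smooth Banach space the normalized duality map $J$ is single-valued. Consequently, for each $x\in B$ the set $Jx$ is a singleton, say $Jx=\{jx\}$, and the union defining $\Pi_{\text{\tiny{\emph{C}}}}(x)$ in \eqref{KLR-S3.1-D1-E1} collapses to $\Pi_{\text{\tiny{\emph{C}}}}(x)=\pi_{\text{\tiny{\emph{C}}}}(jx)$. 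Then Proposition~\ref{KLR-S2.3-P1} (or equivalently the per-piece assertion of Theorem~\ref{KLR-S3.2-T1}) shows $\pi_{\text{\tiny{\emph{C}}}}(jx)$ is closed, convex, and bounded, and it is nonempty by the reflexivity step above. Therefore $\Pi_{\text{\tiny{\emph{C}}}}(x)$ is a nonempty, closed, convex, and bounded subset of $C$ for every $x\in B$, which is the claim.

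I do not anticipate a genuine obstacle here; the corollary is essentially an assembly of already-proved facts, and the only substantive ingredient beyond Theorems~\ref{KLR-S3.2-T2} and~\ref{KLR-S3.2-T1} is the single-valuedness of $J$ on smooth spaces, which should be cited from the Appendix rather than reproved. The one place to be slightly careful is to make sure the nonemptiness of $\pi_{\text{\tiny{\emph{C}}}}(jx)$ (not merely of the union) is invoked correctly — but since $B$ is reflexive this follows from the first part of Theorem~\ref{KLR-S2.3-T1} applied to the unique $jx\in Jx$, exactly as in the proof of Theorem~\ref{KLR-S3.2-T2}(a).
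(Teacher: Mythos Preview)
Your argument is correct and matches the paper's own proof almost exactly: smoothness makes $J$ single-valued, so $\Pi_{\text{\tiny{\emph{C}}}}(x)=\pi_{\text{\tiny{\emph{C}}}}(jx)$, and then reflexivity (via Theorem~\ref{KLR-S2.3-T1} / Theorem~\ref{KLR-S3.2-T2}) together with Proposition~\ref{KLR-S2.3-P1} gives nonemptiness, closedness, convexity, and boundedness. One small correction: the Appendix property you want is $(J_{11})$, not $(J_2)$ (which is the Hilbert-space identity $J=I_H$).
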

\begin{proof} From property ($J_{11}$) in the Appendix, the smoothness of $B$ implies that $J$ is a single-valued mapping. Thus, this corollary follows from Theorem~\ref{KLR-S2.3-T1} and Theorem~\ref{KLR-S3.2-T2}.
\end{proof}

If $B$ is a reflexive strictly convex and smooth Banach space, then $J$ and $\pi_{\text{\tiny{\emph{C}}}}$ are both single-valued. By Alber~\cite{Alb96}, and Ibarakia and Takahashi~\cite{IbaTak07}, we obtain the following result.
\begin{thr}\label{KLR-S3.2-T3} Let $B$ be a reflexive, strictly convex, and smooth Banach space. Then every nonempty, closed, and convex subset $C$ of $B$ is generalized Chebyshev for $\Pi_{\text{\tiny{\emph{C}}}}$.
\end{thr}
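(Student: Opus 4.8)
The plan is to reduce the statement to the single-valuedness of the generalized projection $\pi_{\text{\tiny{\emph{C}}}}$ on all of $B^*$, and then to transport this to $\Pi_{\text{\tiny{\emph{C}}}}$ by using smoothness. Since $B$ is smooth, property ($J_{11}$) in the Appendix shows that the normalized duality map $J$ is single-valued; writing $Jx$ for the unique element of $J(x)$, Definition~\ref{KLR-S3.1-D1} gives $\Pi_{\text{\tiny{\emph{C}}}}(x)=\pi_{\text{\tiny{\emph{C}}}}(Jx)$ for every $x\in B$. Hence it suffices to prove that $\pi_{\text{\tiny{\emph{C}}}}(\phi)$ is a singleton for every $\phi\in B^*$.

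First, existence: because $B$ is reflexive, the forward implication of Theorem~\ref{KLR-S2.3-T1} yields $\pi_{\text{\tiny{\emph{C}}}}(\phi)\ne\emptyset$, and by Proposition~\ref{KLR-S2.3-P1} this set is closed, convex, and bounded. It remains to exclude two distinct minimizers. Suppose $u_1,u_2\in\pi_{\text{\tiny{\emph{C}}}}(\phi)$ and set $m:=\inf_{y\in C}V(\phi,y)=V(\phi,u_1)=V(\phi,u_2)$. Since $C$ is convex, $\tfrac12(u_1+u_2)\in C$, so $m\le V(\phi,\tfrac12(u_1+u_2))$; on the other hand, convexity of $x\mapsto V(\phi,x)$ (Proposition~\ref{KLR-S2.1-P1}($b$)) gives $V(\phi,\tfrac12(u_1+u_2))\le\tfrac12\bigl(V(\phi,u_1)+V(\phi,u_2)\bigr)=m$. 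Therefore equality holds, and since the summands $\|\phi\|^2$ and $-2\langle\phi,\cdot\rangle$ in \eqref{KLR-S2.1-E1} are affine along the segment and cancel in this identity, we are left with
\[\bigl\|\tfrac12(u_1+u_2)\bigr\|^2=\tfrac12\bigl(\|u_1\|^2+\|u_2\|^2\bigr).\]
So the proof will be finished once we show that, in a strictly convex Banach space, this equality forces $u_1=u_2$; equivalently, that $y\mapsto\|y\|^2$ is strictly convex.

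For the last point I would distinguish two cases. If $u_1$ and $u_2$ lie on a common ray through the origin, say $u_2=tu_1$ with $t\ge 0$, then necessarily $t\ne 1$ and $u_1\ne\theta$, and the displayed identity reads $\tfrac14(1+t)^2\|u_1\|^2=\tfrac12(1+t^2)\|u_1\|^2$, i.e. $(1-t)^2=0$, a contradiction. Otherwise $u_1$ and $u_2$ are not positively proportional, so the defining inequality of strict convexity gives $\|\tfrac12(u_1+u_2)\|<\tfrac12(\|u_1\|+\|u_2\|)$; squaring and using $(a+b)^2\le 2(a^2+b^2)$ we obtain $\|\tfrac12(u_1+u_2)\|^2<\tfrac12(\|u_1\|^2+\|u_2\|^2)$, again contradicting the displayed identity. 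Hence $\pi_{\text{\tiny{\emph{C}}}}(\phi)$ is a singleton for every $\phi\in B^*$, and consequently $\Pi_{\text{\tiny{\emph{C}}}}(x)=\pi_{\text{\tiny{\emph{C}}}}(Jx)$ is a singleton for every $x\in B$, i.e. $C$ is generalized Chebyshev for $\Pi_{\text{\tiny{\emph{C}}}}$.

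The only slightly delicate step is the strict convexity of $\|\cdot\|^2$, and in particular the degenerate case in which the two candidate minimizers are positively proportional: there the norm itself is merely affine along the segment joining them, so one must invoke the strict convexity of $s\mapsto s^2$ rather than of the norm. Everything else is routine bookkeeping with the Lyapunov functional $V$ and the structural properties of $\pi_{\text{\tiny{\emph{C}}}}$ already established in Section~\ref{KLR-S2}. (Alternatively, as indicated before the statement, one may simply invoke Alber~\cite{Alb96} and Ibarakia and Takahashi~\cite{IbaTak07}.)
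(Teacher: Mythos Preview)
Your argument is correct and follows the same outline as the paper: smoothness makes $J$ single-valued so that $\Pi_{\text{\tiny{\emph{C}}}}(x)=\pi_{\text{\tiny{\emph{C}}}}(Jx)$, reflexivity gives existence, and strict convexity gives uniqueness of $\pi_{\text{\tiny{\emph{C}}}}(\phi)$. The paper itself does not spell out the uniqueness step but simply invokes Alber~\cite{Alb96} and Ibaraki--Takahashi~\cite{IbaTak07}; you have supplied that detail via the strict convexity of $\|\cdot\|^2$, which is a welcome addition (your case split is fine once one reads ``say $u_2=tu_1$'' as allowing a relabeling so that $u_1\ne\theta$).
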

\subsection{Variational Properties of the generalized metric projection $\Pi_{\text{\tiny{\emph{C}}}}$}\label{KLR-S3.3}
This subsection studies the monotonic and variational properties of the generalized metric projection $\Pi_{\text{\tiny{\emph{C}}}}$ on general Banach spaces.
\begin{pr}\label{KLR-S3.3-P1} Let $B$ be a Banach space and let $C$ be a nonemtpy subset of $B$. Then, $\Pi_C$ is monotone in $B$. That is, for any $x,y\in B$, we have
\begin{equation}\label{KLR-S3.3-P1-EQ1}\langle jx-jy,u-v\rangle\geq 0,\quad \text{for any}\ u\in \pi_{\text{\tiny{\emph{C}}}}(jx)\subset \Pi_{\text{\tiny{\emph{C}}}}(x),\ \text{and}\ v\in \pi_{\text{\tiny{\emph{C}}}}(jy)\subset \Pi_{\text{\tiny{\emph{C}}}}(y).
\end{equation}
\end{pr}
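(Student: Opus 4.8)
The plan is to obtain \eqref{KLR-S3.3-P1-EQ1} directly from the defining minimality of the generalized projection $\pi_{\text{\tiny{\emph{C}}}}$; in fact, taking $\phi_1=jx$ and $\phi_2=jy$ in Proposition~\ref{KLR-S2.1-P1}($f$) already gives the claim, but since the proof of ($f$) was only sketched there (it was said to be similar to \cite[Property 6.b]{Alb96}), I would write the short argument out in full here.

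First I would fix $x,y\in B$, select $jx\in Jx$ and $jy\in Jy$, and take $u\in \pi_{\text{\tiny{\emph{C}}}}(jx)$ and $v\in \pi_{\text{\tiny{\emph{C}}}}(jy)$. Since $v\in C$ and $u$ minimizes $V(jx,\cdot)$ over $C$, we have $V(jx,u)\le V(jx,v)$; symmetrically, since $u\in C$ and $v$ minimizes $V(jy,\cdot)$ over $C$, we have $V(jy,v)\le V(jy,u)$. Adding these two inequalities yields $V(jx,u)+V(jy,v)\le V(jx,v)+V(jy,u)$.

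Next I would expand every term using the definition \eqref{KLR-S2.1-E1} of the Lyapunov functional $V$. On both sides of this inequality each of the quantities $\|jx\|^2_{\text{\tiny{\emph{B}}}^*}$, $\|jy\|^2_{\text{\tiny{\emph{B}}}^*}$, $\|u\|^2_{\text{\tiny{\emph{B}}}}$, and $\|v\|^2_{\text{\tiny{\emph{B}}}}$ appears exactly once, so they cancel, and the inequality collapses to $-\langle jx,u\rangle-\langle jy,v\rangle\le -\langle jx,v\rangle-\langle jy,u\rangle$, that is, $\langle jx,u-v\rangle+\langle jy,v-u\rangle\ge 0$, which is precisely $\langle jx-jy,u-v\rangle\ge 0$. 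Because $u$ ranges over $\pi_{\text{\tiny{\emph{C}}}}(jx)\subset \Pi_{\text{\tiny{\emph{C}}}}(x)$ and $v$ over $\pi_{\text{\tiny{\emph{C}}}}(jy)\subset \Pi_{\text{\tiny{\emph{C}}}}(y)$ with $jx\in Jx$ and $jy\in Jy$ arbitrary, this establishes \eqref{KLR-S3.3-P1-EQ1}.

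There is no genuine obstacle here: the argument is the standard ``add the two minimality inequalities'' trick used for projection-type monotonicity, and the only point requiring a moment's care is the complete cancellation of the four norm-squared terms, which is immediate from the symmetric way they enter $V(jx,u)+V(jy,v)$ versus $V(jx,v)+V(jy,u)$. Alternatively, one may simply invoke Proposition~\ref{KLR-S2.1-P1}($f$) with $\phi_1=jx$ and $\phi_2=jy$ and conclude in one line.
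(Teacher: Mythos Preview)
Your proof is correct and takes essentially the same approach as the paper: write down the two minimality inequalities $V(jx,u)\le V(jx,v)$ and $V(jy,v)\le V(jy,u)$, add them, and cancel the norm-squared terms to obtain $\langle jx-jy,u-v\rangle\ge 0$. The paper's proof is identical in substance, only presenting the two inequalities in already-expanded form before adding.
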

\begin{proof} From $u\in \pi_{\text{\tiny{\emph{C}}}}(jx)$ and $v\in \pi_{\text{\tiny{\emph{C}}}}(jy)$, for $jx\in Jx$ and $jy\in Jy$, we have
\begin{align*}
\|jx\|^2-2\langle jx,u\rangle +\|u\|^2&\leq \|jx\|^2-2\langle jx,v\rangle +\|v\|^2,\\
\|jy\|^2-2\langle jy,v\rangle +\|v\|^2&\leq \|jy\|^2-2\langle jy,u\rangle +\|u\|^2,
\end{align*}
and by combining the above inequalities, we obtain
$$-\langle jy,u\rangle-\langle jx,v\rangle+\langle jy,v\rangle+\langle jx,u\rangle \geq 0, $$
which completes the proof.
\end{proof}
\begin{rem} Operators satisfying \eqref{KLR-S3.3-P1-EQ1} are called $d$-accretive in Alber and Reich~\cite{AlbRei94}.
\end{rem}
The generalized metric projection $\Pi$ is characterized by the following basic variational principle in uniformly convex and uniformly smooth Banach spaces (see Alber~\cite{Alb96}) and in reflexive, strictly convex and smooth Banach spaces (see \cite{IbaTak07}).
\begin{thr}\label{KLR-S3.3-T1} Let $B$ be a reflexive, strictly convex, and smooth Banach space, and let $C$ be a nonempty, closed, and convex subset of $B$. Then for any $x\in B$,
\begin{equation}\label{KLR-S3.3-T1-E1}
z\in \Pi_{\text{\tiny{\emph{C}}}}(x)\quad \Leftrightarrow \quad \langle Jx-Jz,z-y\rangle,\quad \text{for all}\ y\in C.
\end{equation}
\end{thr}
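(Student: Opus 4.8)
The plan is to derive \eqref{KLR-S3.3-T1-E1} by combining the characterization of $\pi_{\text{\tiny{\emph{C}}}}$ given in Theorem~\ref{KLR-S2.4-T1} with the definition $\Pi_{\text{\tiny{\emph{C}}}}(x)=\cup_{jx\in Jx}\pi_{\text{\tiny{\emph{C}}}}(jx)$ and the fact that, since $B$ is smooth, $J$ is single-valued (property $(J_{11})$). Because $J$ is single-valued, the union in \eqref{KLR-S3.1-D1-E1} collapses to a single term, so $\Pi_{\text{\tiny{\emph{C}}}}(x)=\pi_{\text{\tiny{\emph{C}}}}(Jx)$ for every $x\in B$. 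Likewise, for the point $z$ appearing on the right-hand side, $Jz$ is the unique duality functional at $z$, so the expression $\langle Jx-Jz,z-y\rangle$ is unambiguous. (I note in passing that \eqref{KLR-S3.3-T1-E1} as typeset is missing the ``$\geq 0$''; I will read the intended statement as $z\in\Pi_{\text{\tiny{\emph{C}}}}(x)\Leftrightarrow \langle Jx-Jz,z-y\rangle\geq 0$ for all $y\in C$.)

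The forward implication is then immediate: if $z\in\Pi_{\text{\tiny{\emph{C}}}}(x)=\pi_{\text{\tiny{\emph{C}}}}(Jx)$, then applying Theorem~\ref{KLR-S2.4-T1} with $\phi=Jx$ gives $\langle Jx-Jz,z-y\rangle\geq 0$ for all $y\in C$, since $Jz$ is the (unique) value of the duality map at $z$. For the converse, suppose $\langle Jx-Jz,z-y\rangle\geq 0$ for all $y\in C$. Here I would invoke Theorem~\ref{KLR-S2.4-T2} (the sufficiency direction valid in arbitrary Banach spaces): with $\phi=Jx$ and $jz=Jz\in Jz$, the hypothesis \eqref{KLR-S2.4-T2-E1} holds, hence $z\in\pi_{\text{\tiny{\emph{C}}}}(Jx)=\Pi_{\text{\tiny{\emph{C}}}}(x)$. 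Alternatively, one could cite Theorem~\ref{KLR-S2.4-T1} directly in both directions, since under the reflexive, strictly convex, smooth hypotheses its equivalence applies verbatim with $\phi=Jx$; I would use whichever keeps the chain of references cleanest, probably Theorem~\ref{KLR-S2.4-T1} for both directions with a one-line remark that smoothness makes $J$ single-valued so $\Pi_{\text{\tiny{\emph{C}}}}(x)=\pi_{\text{\tiny{\emph{C}}}}(Jx)$.

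The only real work is the bookkeeping that turns the set-valued $\Pi_{\text{\tiny{\emph{C}}}}$ into the single-valued $\pi_{\text{\tiny{\emph{C}}}}\circ J$, and confirming that the well-definedness needed for Theorem~\ref{KLR-S2.4-T1} is in place — namely that $\pi_{\text{\tiny{\emph{C}}}}(Jx)$ is nonempty and single-valued, which is exactly Theorem~\ref{KLR-S3.2-T3} (every nonempty closed convex $C$ is generalized Chebyshev for $\Pi_{\text{\tiny{\emph{C}}}}$ in this class of spaces). So there is no genuine obstacle; the statement is essentially a transcription of the known variational principle for $\pi_{\text{\tiny{\emph{C}}}}$ through the single-valued duality map. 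If one wanted the result in greater generality (dropping strict convexity, so that $\Pi_{\text{\tiny{\emph{C}}}}$ becomes genuinely set-valued), the subtle point would be that Theorem~\ref{KLR-S2.4-T1}'s equivalence is no longer available and only the sufficiency of Theorem~\ref{KLR-S2.4-T2} survives, as Example~\ref{KLR-S2.4-Ex1} shows; but under the stated hypotheses this issue does not arise.
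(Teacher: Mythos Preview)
Your argument is correct, but there is no proof in the paper to compare it against: the authors do not prove Theorem~\ref{KLR-S3.3-T1} themselves but simply state it as a known result, citing Alber~\cite{Alb96} for the uniformly convex and uniformly smooth case and Ibaraki--Takahashi~\cite{IbaTak07} for the reflexive, strictly convex, and smooth case.

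Your reduction to Theorem~\ref{KLR-S2.4-T1} via the observation $\Pi_{\text{\tiny{\emph{C}}}}(x)=\pi_{\text{\tiny{\emph{C}}}}(Jx)$ (using single-valuedness of $J$ from smoothness) is exactly the natural route and would serve as a self-contained proof from the paper's own ingredients. One small bookkeeping point: Theorem~\ref{KLR-S2.4-T1} as literally stated carries the stronger hypothesis ``uniformly convex and uniformly smooth,'' whereas Theorem~\ref{KLR-S3.3-T1} assumes only ``reflexive, strictly convex, and smooth.'' You implicitly rely on the paragraph preceding Theorem~\ref{KLR-S2.4-T1}, which notes (with a reference to~\cite{IbaTak07}) that the same equivalence holds under the weaker hypotheses; it would be worth making that reliance explicit, or else using Theorem~\ref{KLR-S2.4-T2} for the sufficiency direction (as you suggest) together with the Ibaraki--Takahashi reference for necessity.
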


However, the generalized metric projection $\Pi$ for general Banach spaces is not characterized by the variational principle. More precisely, in general, \eqref{KLR-S3.3-T1-E1} is a sufficient condition for $z\in \Pi_{\text{\tiny{\emph{C}}}}(x)$ (see Theorem~\ref{KLR-S3.3-T2}), but it is not a necessary condition (see Example~\ref{KLR-S3.3-Ex1}).
 \begin{thr}\label{KLR-S3.3-T2} Let $B$ be a Banach space and let $C$ be a nonempty subset of $B$. For any given $x\in B$ and $jx\in Jx$, if there is $jz\in Jz$ such that
 \begin{equation}\label{KLR-S3.3-T2-E1}
 \langle jx-jz,z-y\rangle\geq 0,\quad \text{for all} \ y\in C,
 \end{equation}
 then $z\in \pi_{\text{\tiny{\emph{C}}}}(jx)\subset \Pi_{\text{\tiny{\emph{C}}}}(x).$
 \end{thr}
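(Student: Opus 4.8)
The plan is to reduce Theorem~\ref{KLR-S3.3-T2} directly to Theorem~\ref{KLR-S2.4-T2}, which was already proved for the generalized projection $\pi$. Observe that the hypothesis \eqref{KLR-S3.3-T2-E1} is, with $\phi$ replaced by the fixed functional $jx\in Jx$, \emph{exactly} the hypothesis \eqref{KLR-S2.4-T2-E1} of Theorem~\ref{KLR-S2.4-T2}: there exists $jz\in Jz$ with $\langle jx - jz, z-y\rangle\ge 0$ for all $y\in C$. Since $jx\in B^*$, Theorem~\ref{KLR-S2.4-T2} applies with $\phi=jx$ and yields $z\in \pi_{\text{\tiny{\emph{C}}}}(jx)$.

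First I would fix $x\in B$ and $jx\in Jx$ as in the statement, and take the $jz\in Jz$ provided by the hypothesis. Then I would invoke Theorem~\ref{KLR-S2.4-T2} with the functional $\phi := jx$: the inequality \eqref{KLR-S3.3-T2-E1} is precisely \eqref{KLR-S2.4-T2-E1}, so the conclusion of that theorem gives $z\in \pi_{\text{\tiny{\emph{C}}}}(jx)$. Finally, by Definition~\ref{KLR-S3.1-D1}, $\Pi_{\text{\tiny{\emph{C}}}}(x)=\bigcup_{jx'\in Jx}\pi_{\text{\tiny{\emph{C}}}}(jx')$, and since the particular $jx$ we used lies in $Jx$, we get $\pi_{\text{\tiny{\emph{C}}}}(jx)\subseteq \Pi_{\text{\tiny{\emph{C}}}}(x)$, hence $z\in \Pi_{\text{\tiny{\emph{C}}}}(x)$. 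This completes the argument.

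There is essentially no obstacle here: the result is a formal corollary of the already-established Theorem~\ref{KLR-S2.4-T2} together with the defining formula \eqref{KLR-S3.1-D1-E1} for $\Pi_{\text{\tiny{\emph{C}}}}$. The only points worth stating carefully are (i) that no convexity or closedness of $C$ is needed — Theorem~\ref{KLR-S2.4-T2} was stated for a merely nonempty closed subset, and in fact the computation there uses only nonemptiness, consistent with the hypothesis of the present theorem being just ``$C$ a nonempty subset of $B$''; and (ii) that the chosen $jx$ must be the same functional in both the hypothesis \eqref{KLR-S3.3-T2-E1} and the membership $z\in\pi_{\text{\tiny{\emph{C}}}}(jx)$, which is automatic since we never change it. If one prefers a self-contained proof, one can simply repeat the short chain of inequalities from the proof of Theorem~\ref{KLR-S2.4-T2}: for all $y\in C$,
\[
V(jx,y)-V(jx,z)=-2\langle jx,y\rangle+\|y\|^2+2\langle jx,z\rangle-\|z\|^2
=2\langle jx,z\rangle-2\langle jx,y\rangle-2\langle jz,z\rangle+\|jz\|^2+\|y\|^2,
\]
and then bound $\|jz\|^2+\|y\|^2\ge 2\|jz\|\,\|y\|\ge 2\langle jz,y\rangle$ to get $V(jx,y)-V(jx,z)\ge 2\langle jx-jz,z-y\rangle\ge 0$, so $V(jx,z)=\inf_{y\in C}V(jx,y)$ and $z\in\pi_{\text{\tiny{\emph{C}}}}(jx)\subseteq\Pi_{\text{\tiny{\emph{C}}}}(x)$.
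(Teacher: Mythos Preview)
Your proposal is correct and is essentially the paper's own argument: the paper simply repeats the chain of inequalities from Theorem~\ref{KLR-S2.4-T2} with $jx$ in place of $\phi$, which is exactly the self-contained computation you wrote out at the end. Your observation that the closedness hypothesis in Theorem~\ref{KLR-S2.4-T2} is never used in its proof (so the reduction goes through for merely nonempty $C$) is accurate and worth keeping, since otherwise the direct appeal to Theorem~\ref{KLR-S2.4-T2} would not formally cover the present hypotheses.
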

 \begin{proof} Assume that there are $jx\in Jx$ and $jz\in Jz$ satisfying \eqref{KLR-S3.3-T2-E1}. Then, for all $y\in C$, we have
 \begin{align*}
 V(jx,y)-V(jx,z)&=(\|jx\|^2-2\langle jx,y\rangle+\|y\|^2 )-(\|jx\|^2-2\langle jx,z\rangle+\|z\|^2 )\\
 &=-2\langle jx,y\rangle+\|y\|^2+2\langle jx,z\rangle-\|z\|^2\\
  &=-2\langle jx,y\rangle+2\langle jx,z\rangle-2\langle jz,z\rangle+\|jz\|^2+\|y\|^2\\
  &\geq -2\langle jx,y\rangle+2\langle jx,z\rangle-2\langle jz,z\rangle+2\|jz\|\|y\|\\
  &\geq -2\langle jx,y\rangle+2\langle jx,z\rangle-2\langle jz,z\rangle+2\langle jz,y\rangle\\
  &\geq 2\langle jx-jz,z-y\rangle\geq 0.
 \end{align*}
 That is, $V(jx,z)\leq V(jx,y)$ for all $y\in C.$ It follows that $z\in \pi_{\text{\tiny{\emph{C}}}}(jx)$ and hence $z\in \Pi_{\text{\tiny{\emph{C}}}}(x).$
 \end{proof}

 The following example, a modification of Example~\ref{KLR-S2.4-Ex1}, shows that \eqref{KLR-S3.3-T2-E1} is not a necessary condition for $z\in \pi_{\text{\tiny{\emph{C}}}}$, for any $jx\in Jx.$
 \begin{ex}\label{KLR-S3.3-Ex1} Let $u=(3,0,0,0,\ldots)\in \ell_1\backslash T.$ Let $\gamma=(3,1,0,0,\ldots)\in \ell_{\infty}$. Let $z=(1,0,0,0,\ldots)\in D\subset T.$ Then:
 \begin{description}
\item[($a$)] $\gamma\in Ju.$
\item[($b$)] $z=\pi_{\text{\tiny{\emph{T}}}} (\gamma)\subset \Pi_{\text{\tiny{\emph{T}}}} (u).$
\item[($c$)] There exists $\bar{y}\in T$ such that
$$\langle \gamma-jz,z-\bar{y}\rangle<0,\quad \text{for every}\ jz\in Jz. $$
\end{description}
 \end{ex}
 \begin{proof} The proof of part ($a$) is straightforward, and hence it is omitted here. The proofs of parts ($b$) and ($c$) are the same as the proofs of parts ($a$) and ($b$) in Example 2.9.
 \end{proof}

\section{Relating The generalized metric projection and the metric projection}\label{KLR-S4}
This section explores relationships between the generalized metric projection and the standard metric projection onto a nonempty subset of Banach spaces. We begin with the following fact.
\begin{pr}\label{KLR-S4-P1}  Let $B$ be a Banach space and let $C$ a nonempty subset of $B$. Then: $\Pi_{\text{\tiny{\emph{C}}}}(\theta)=P_{\text{\tiny{\emph{C}}}}(\theta).$
\end{pr}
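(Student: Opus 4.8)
The plan is to unwind both sides of the claimed identity directly from their definitions and observe that at the origin everything collapses to the norm-minimization problem. The key fact is property $(J_3)$ of the normalized duality map, namely $J\theta=\{\theta_{B^*}\}$, which makes the union in Definition~\ref{KLR-S3.1-D1} trivial.

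First I would compute $\Pi_{\text{\tiny{\emph{C}}}}(\theta)$. By \eqref{KLR-S3.1-D1-E1} we have $\Pi_{\text{\tiny{\emph{C}}}}(\theta)=\cup_{j\theta\in J\theta}\pi_{\text{\tiny{\emph{C}}}}(j\theta)$, and since $J\theta=\{\theta_{B^*}\}$ this reduces to $\Pi_{\text{\tiny{\emph{C}}}}(\theta)=\pi_{\text{\tiny{\emph{C}}}}(\theta_{B^*})$. Next I would expand the Lyapunov functional at $\phi=\theta_{B^*}$: for any $y\in B$, $V(\theta_{B^*},y)=\|\theta_{B^*}\|^2-2\langle\theta_{B^*},y\rangle+\|y\|^2=\|y\|^2$. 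Hence
$$\pi_{\text{\tiny{\emph{C}}}}(\theta_{B^*})=\Big\{u\in C:\ \|u\|^2=\inf_{y\in C}\|y\|^2\Big\}=\Big\{u\in C:\ \|u\|=\inf_{y\in C}\|y\|\Big\}.$$

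On the other hand, the metric projection of $\theta$ onto $C$ is by definition $P_{\text{\tiny{\emph{C}}}}(\theta)=\{u\in C:\ \|\theta-u\|\le\|\theta-z\|\ \text{for all}\ z\in C\}=\{u\in C:\ \|u\|\le\|z\|\ \text{for all}\ z\in C\}$, which is exactly the set of norm-minimizers of $C$, i.e. $\{u\in C:\ \|u\|=\inf_{y\in C}\|y\|\}$. Comparing the two displayed sets gives $\Pi_{\text{\tiny{\emph{C}}}}(\theta)=P_{\text{\tiny{\emph{C}}}}(\theta)$, which completes the argument. There is essentially no obstacle here; the only point worth stating carefully is the appeal to $J\theta=\{\theta_{B^*}\}$, since without it the right-hand union would not obviously collapse. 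No assumption on $C$ beyond nonemptiness is needed, and the identity holds even when both sides are empty (both equal the empty set when $C$ has no norm-minimizer).
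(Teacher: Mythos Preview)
Your proposal is correct and follows essentially the same route as the paper: the paper simply observes that $V(\theta,y)=\|\theta\|^2-2\langle\theta,y\rangle+\|y\|^2=\|\theta-y\|^2$ for all $y\in C$, whence the claim is immediate. Your version is just a more explicit unpacking of the same computation, making the appeal to $J\theta=\{\theta_{B^*}\}$ (property $(J_3)$) explicit where the paper leaves it implicit.
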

\begin{proof} Since
$$V(\theta,y)=\|\theta\|^2-2\langle \theta,y\rangle+\|y\|^2=\|\theta -y\|^2,\quad \text{for any}\ y\in C, $$
the claim follows at once. \end{proof}
\subsection{Generalized identical points}\label{KLR-S4.1}
For any Banach space $B$ and an arbitrary given nonempty subset $C$ of $B$, the standard metric projection $P_{\text{\tiny{\emph{C}}}}:B\to 2^C$ always satisfies
\begin{equation}\label{KLR-S4.1-E1}
P_{\text{\tiny{\emph{C}}}} x=x,\quad \text{for every}\ x\in C.
\end{equation}
It is a significant property under the common sense of distance. However, in general, the generalized metric projection $\Pi_C:B\to 2^C$ does not satisfy \eqref{KLR-S4.1-E1}, which brings out the significant difference between $P_{\text{\tiny{\emph{C}}}}$ and $\Pi_{\text{\tiny{\emph{C}}}}$ on Banach spaces. Before further investigating the differences between the two notions, we introduce the notion of identical generalized points for the normalized duality mapping on Banach spaces.
\begin{defn}\label{KLR-S4.1-D1} Let $B$ be a Banach space with dual $B^*.$ For $x,y\in B$, if $Jx\cap Jy\ne \emptyset$, then $x$ and $y$ are called \textbf{generalized identical points}. For any $x\in B$, the set of all its generalized identical points is denoted by $\mathcal{J}(x)$. That is,
$$\mathcal{J}(x)=\{y\in B|\ Jx\cap Jy\ne \emptyset\}.$$
\end{defn}

It is clear that $x\in \mathcal{J}(x)$, for any $x\in B$.  Notice that, for any $x\in B$, we have $\|jx\|=\|x\|$, for any $jx\in Jx$. This implies that, $\|y\|=\|x\|$, for any $y\in \mathcal{J}(x)$. That is,
$$x \ \text{and}\ y\ \text{are generalized identical}\ \quad \Rightarrow\quad \|x\|=\|y\|,\quad \text{for}\ x,y\in B. $$

We have the following characterization of  strictly convex Banach spaces:
\begin{pr}\label{KLR-S4.1-P1} A Banach space B is strictly convex, if and only if,
$$\mathcal{J}(x)=x,\quad \text{for every}\ x\in B.$$
\end{pr}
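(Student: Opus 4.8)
The plan is to prove both implications directly from the definition of the normalized duality map $J$, exploiting the elementary observation that the relation $f \in Jx$ simultaneously records the three equalities $\|f\|_{B^*} = \|x\|_B$ and $\langle f,x\rangle = \|x\|^2 = \|f\|^2$, i.e. that $f$ attains its norm at $x/\|x\|$ whenever $x \neq \theta$. No reflexivity or smoothness is needed; the only external tool is the Hahn--Banach theorem.

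First I would handle the forward implication. Assume $B$ is strictly convex and fix $x \in B$; since always $x \in \mathcal{J}(x)$, it suffices to show $\mathcal{J}(x) \subseteq \{x\}$. Let $y \in \mathcal{J}(x)$, so there is $f \in Jx \cap Jy$. Then $\langle f,x\rangle = \|x\|^2 = \|f\|^2$ and $\langle f,y\rangle = \|y\|^2 = \|f\|^2$, whence $\|x\| = \|y\| = \|f\| =: r$. If $r = 0$ then $x = y = \theta$ and we are done, so suppose $r > 0$. Adding the two pairing identities gives $\langle f, \tfrac{x+y}{2}\rangle = r^2 = \|f\|\,r$; combined with $\langle f,\tfrac{x+y}{2}\rangle \le \|f\|\,\|\tfrac{x+y}{2}\| \le \|f\|\,r$, this forces $\|\tfrac{x+y}{2}\| = r$. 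Thus $\tfrac{1}{2}\bigl(\tfrac{x}{r} + \tfrac{y}{r}\bigr)$ lies on the unit sphere, as do $\tfrac{x}{r}$ and $\tfrac{y}{r}$, so strict convexity yields $\tfrac{x}{r} = \tfrac{y}{r}$, i.e. $y = x$.

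For the converse I would argue by contraposition. Assume $B$ is not strictly convex, so there exist $u \neq v$ with $\|u\| = \|v\| = 1$ and $\|\tfrac{u+v}{2}\| = 1$. A short symmetrization using only the triangle inequality (writing $\tfrac{u+v}{2}$ as the average of $\lambda u + (1-\lambda)v$ and $(1-\lambda)u + \lambda v$) propagates this single midpoint relation to the whole chord, so $\|w\| = 1$ for $w := \tfrac{u+v}{2}$. By Hahn--Banach pick $f \in B^*$ with $\|f\| = 1$ and $\langle f,w\rangle = 1$. Since $\langle f,u\rangle \le 1$ and $\langle f,v\rangle \le 1$ while their average equals $1$, both equal $1$; hence $\langle f,u\rangle = \|u\|^2 = \|f\|^2$ and likewise for $v$, so $f \in Ju \cap Jv$. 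Therefore $v \in \mathcal{J}(u)$ with $v \neq u$, i.e. $\mathcal{J}(u) \neq \{u\}$.

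I do not expect a genuine obstacle here: the argument is essentially routine. The only points needing (minor) care are the degenerate case $x = \theta$ in the first direction — dispatched at once because $J\theta = \{\theta\}$ by property $(J_3)$ — and, in the converse, the verification that $\|\tfrac{u+v}{2}\| = 1$ already forces the entire segment $[u,v]$ onto the unit sphere, which is the one place where one must be slightly careful to get a matching lower bound from convexity of the norm.
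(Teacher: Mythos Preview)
Your proof is correct. The paper's own proof is a one-line appeal to property $(J_7)$ in the Appendix (the standard fact that $B$ is strictly convex if and only if $J$ is injective in the sense $x\neq y\Rightarrow Jx\cap Jy=\emptyset$), whereas you prove exactly this equivalence from scratch via Hahn--Banach and the definition of strict convexity. So the two arguments have the same mathematical content; yours is self-contained, the paper's delegates to the literature.

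One small remark: the paragraph about ``propagating the midpoint relation to the whole chord'' is unnecessary here, since you only ever use $\|w\|=1$ for $w=\tfrac{u+v}{2}$, which is already your hypothesis. You can safely delete that sentence without affecting the argument.
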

\begin{proof} By the property $(J_7)$ of the normalized duality mapping (see the Appendix), $B$ is strictly convex, if and only if, $x\ne y$ implies that $J(x)\cap J(y)=\emptyset$, for every $x,y\in B.$
\end{proof}

\begin{lm} \label{KLR-S4.1-Ex4.3} For any given $r>0$ and for $b_r=(r,r,\ldots)\in \ell_{\infty}$, we have
$$x\in D^+(r)\quad \Rightarrow\quad \mathcal{J}x=\beta_r\quad \text{and}\quad x\in D(r)\backslash D^+(r)\quad \Rightarrow\quad \mathcal{J}x \varsupsetneq \{\beta_r\}.$$
\end{lm}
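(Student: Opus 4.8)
The plan is to reduce the lemma to an explicit description of the normalized duality map $J$ on the simplex $D(r)\subset\ell_1$ (recall $\ell_1^*=\ell_\infty$) and then read the conclusion off from Definition~\ref{KLR-S4.1-D1}. First I would compute $Jx$ for an arbitrary $x=(x_1,x_2,\ldots)\in D(r)$: since $x_n\ge 0$ and $\sum_n x_n=\|x\|_{\ell_1}=r$ by Lemma~\ref{KLR-S1.1-L1}($a$), I claim that $jx\in\ell_\infty$ lies in $Jx$ if and only if $|(jx)_n|\le r$ for all $n$ and $(jx)_n=r$ for every $n$ with $x_n>0$. For the forward direction, if $jx\in Jx$ then $\|jx\|_\infty=r$ and, by H\"older, $\langle jx,x\rangle=\sum_n(jx)_nx_n\le\sum_n|(jx)_n|x_n\le r\sum_n x_n=r^2$; since $jx\in Jx$ also forces $\langle jx,x\rangle=\|x\|^2=r^2$, both inequalities are equalities, whence $(jx)_n\ge 0$ and $|(jx)_n|=r$ at every $n$ with $x_n>0$, i.e.\ $(jx)_n=r$ there. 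The converse is a one-line check ($\|jx\|_\infty=r$ because $x$ has a positive entry, and $\langle jx,x\rangle=r\sum_{x_n>0}x_n=r^2=\|x\|^2=\|jx\|^2$). Two facts then follow: $\beta_r\in Jx$ for \emph{every} $x\in D(r)$; and $Jx=\{\beta_r\}$ precisely when $x$ has no vanishing entry, i.e.\ when $x\in D^+(r)$, while $Jx\varsupsetneq\{\beta_r\}$ when $x\in D(r)\setminus D^+(r)$, since at a zero entry of $x$ the corresponding coordinate of $jx$ is free in $[-r,r]$.

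Next I would pass to $\mathcal{J}x$. If $x\in D^+(r)$ then $Jx=\{\beta_r\}$, so by Definition~\ref{KLR-S4.1-D1} a point $y\in\ell_1$ lies in $\mathcal{J}x$ iff $\beta_r\in Jy$; and $\beta_r\in Jy$ means exactly $\|y\|_{\ell_1}=\|\beta_r\|_\infty=r$ together with $r\sum_n y_n=\langle\beta_r,y\rangle=\|y\|^2=r^2$, i.e.\ $\sum_n y_n=r=\sum_n|y_n|$, which forces $y_n\ge 0$ for all $n$; thus $\beta_r\in Jy\Leftrightarrow y\in D(r)$, and hence $\mathcal{J}x=D(r)$. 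If instead $x\in D(r)\setminus D^+(r)$, then $D(r)\subseteq\mathcal{J}x$ is immediate, since $\beta_r\in Jx\cap Jy$ for every $y\in D(r)$; and to make the inclusion strict I would produce a witness outside $D(r)$: choose an index $k$ with $x_k=0$, let $jx$ coincide with $\beta_r$ except that $(jx)_k=-r$ (this $jx$ belongs to $Jx$ by the first step), fix some $j\ne k$, and set $y_j=r/2$, $y_k=-r/2$, $y_n=0$ otherwise. Then $\|y\|_{\ell_1}=r=\|jx\|_\infty$ and $\langle jx,y\rangle=r\cdot(r/2)+(-r)\cdot(-r/2)=r^2=\|y\|^2=\|jx\|^2$, so $jx\in Jx\cap Jy$, i.e.\ $y\in\mathcal{J}x$, while $y\notin D(r)$ because $y_k<0$; hence $\mathcal{J}x\varsupsetneq D(r)$.

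The only step that needs genuine care is the equality analysis in the computation of $Jx$: one must use the two distinct equality cases — nonnegativity of $(jx)_n$ on the support of $x$ and saturation $|(jx)_n|=r$ there — to see that $Jx$ collapses to the singleton $\{\beta_r\}$ exactly on $D^+(r)$, and this dichotomy then propagates to $\mathcal{J}x$ by the bookkeeping above; everything else is routine. (Since $\mathcal{J}x\subseteq\ell_1$ while $\beta_r\in\ell_\infty\setminus\ell_1$, I would record the two conclusions as $\mathcal{J}x=D(r)$ and $\mathcal{J}x\varsupsetneq D(r)$ respectively; equivalently, for the first case, $Jx=\{\beta_r\}$.)
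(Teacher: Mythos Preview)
The paper states this lemma without proof, so there is no argument in the paper to compare against directly. The nearest thing is Example~\ref{KLR-S4.2-Ex1}, where the paper shows that $Jy=\{\beta_a\}$ for $y\in\ell_1$ with all positive entries and $\|y\|=a$, via exactly the equality-case analysis you carry out (from $\langle jy,y\rangle=\|y\|^2$ and $\|jy\|_\infty=\|y\|$ deduce that each coordinate of $jy$ equals $a$). Your computation of $Jx$ on $D(r)$ is correct and matches that argument; your further step to $\mathcal{J}x$ is also correct.

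You are right to flag the type mismatch in the printed statement: $\mathcal{J}x\subseteq\ell_1$ by Definition~\ref{KLR-S4.1-D1}, whereas $\beta_r\in\ell_\infty\setminus\ell_1$, so the conclusion ``$\mathcal{J}x=\beta_r$'' cannot be read literally. The paper itself records the $J$-version separately as Lemma~\ref{KLR-S1.1-L2} (with $Jx$ in place of $\mathcal{J}x$), which is consistent; the present lemma is almost certainly either a duplicate of that or is meant to assert $\mathcal{J}x=D(r)$ versus $\mathcal{J}x\varsupsetneq D(r)$. Your write-up establishes both readings, and your explicit witness $y$ with a negative entry showing $\mathcal{J}x\varsupsetneq D(r)$ when $x\in D(r)\setminus D^+(r)$ is a detail the paper does not supply anywhere.
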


The following example demonstrates that if a Banach space is not strictly convex, there may be some points with non-singleton generalized identical sets.
\begin{ex}\label{KLR-S4.1-Ex1} Consider the infinite-dimensional simplex $D$ in $\ell_1,$ which is a nonempty, closed, and convex subset of $\ell_1.$ We recall that $D =\{x\in \ell_1|\ \|x\|=1,\ \text{and}\ x\ \text{has nonnegative entries}\}$. We let $\phi=(1,1,1,\ldots)\in \ell_{\infty}.$ Then,
\begin{description}
\item[($a$)] $\displaystyle \phi\in \cap_{x\in D}Jx.$
\item[($b$)] $\mathcal{J}(x)\supset D,$ for every $x\in D.$
\item[($c$)] $\Pi_{\text{\tiny{\emph{D}}}}(x)=D,$ for every $x\in D.$
\end{description}
\end{ex}
\begin{proof} For every $x\in D$, we have $\|\phi\|^2=\langle \phi,x\rangle=\|x\|^2=1,$ ensuring that $\phi\in Jx,$ for every $x\in D.$ Thus, for any $x,y\in D$, $x$ and $y$ are  generalized identical points. Let $x\in D$ be arbitrarily fixed. For every $y\in D$, from part ($a$), $V(\phi,y)=0$, which implies that $y\in \Pi_{\text{\tiny{\emph{D}}}}(x)$.
\end{proof}

Example~\ref{KLR-S4.1-Ex1} shows that, for a nonempty, closed, and convex subset $D$ of $\ell_1$, the generalized metric projection $\Pi_{\text{\tiny{\emph{D}}}}:D\to 2^{D}$ does not satisfy \eqref{KLR-S4.1-E1} for the standard metric projection $P_{\text{\tiny{\emph{D}}}}.$

The following theorem connects the generalized metric projection and the generalized identical points in Banach spaces. In view of Example~\ref{KLR-S4.1-Ex1}, it also shows the significant difference between $P_{\text{\tiny{\emph{C}}}}$ and $\Pi_{\text{\tiny{\emph{C}}}}$ on general Banach spaces.
\begin{thr}\label{KLR-S4.1-T1} Let $B$ be a Banach space and let $C$ be a nonempty subset of $B$. Then:
\begin{description}
\item[($a$)] $\Pi_{\text{\tiny{\emph{C}}}}(x)\supseteq \mathcal{J}(x)\cap C,$ for every $x\in B.$
\item[($b$)] $\Pi_{\text{\tiny{\emph{C}}}}(x)= \mathcal{J}(x)\cap C\ni x,$ for every $x\in C.$
\end{description}
\end{thr}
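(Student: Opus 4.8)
The plan is to unwind the three definitions in play --- the generalized projection $\pi_{\text{\tiny{\emph{C}}}}$ from \eqref{KLR-S2.1-D1-E1}, the generalized metric projection $\Pi_{\text{\tiny{\emph{C}}}}$ from \eqref{KLR-S3.1-D1-E1}, and the set $\mathcal{J}(x)$ from Definition~\ref{KLR-S4.1-D1} --- and to lean on parts ($d$) and ($e$) of Proposition~\ref{KLR-S2.1-P1}. For part ($a$), I would fix $x\in B$ and take any $y\in\mathcal{J}(x)\cap C$. By definition of $\mathcal{J}(x)$ there is a common element $j\in Jx\cap Jy$. Since $j\in Jy$, Proposition~\ref{KLR-S2.1-P1}($d$) gives $V(j,y)=0$; and by Proposition~\ref{KLR-S2.1-P1}($c$), $V(j,u)\ge 0$ for every $u\in C$, so $y$ is a global minimizer of $V(j,\cdot)$ over $C$, i.e. $y\in\pi_{\text{\tiny{\emph{C}}}}(j)$. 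As $j\in Jx$, the union in \eqref{KLR-S3.1-D1-E1} then yields $y\in\pi_{\text{\tiny{\emph{C}}}}(j)\subseteq\Pi_{\text{\tiny{\emph{C}}}}(x)$. This proves $\Pi_{\text{\tiny{\emph{C}}}}(x)\supseteq\mathcal{J}(x)\cap C$.

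For part ($b$), let $x\in C$. The inclusion $\supseteq$ is immediate from ($a$), and $x\in\mathcal{J}(x)\cap C$ since $x\in C$ and $Jx\cap Jx=Jx\ne\emptyset$; in particular $x\in\Pi_{\text{\tiny{\emph{C}}}}(x)$, which accounts for the ``$\ni x$'' assertion. So the work is the reverse inclusion $\Pi_{\text{\tiny{\emph{C}}}}(x)\subseteq\mathcal{J}(x)\cap C$. Take $z\in\Pi_{\text{\tiny{\emph{C}}}}(x)$; then $z\in C$, and by \eqref{KLR-S3.1-D1-E1} there is some $jx\in Jx$ with $z\in\pi_{\text{\tiny{\emph{C}}}}(jx)$. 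Because $x\in C$, we have $V(jx,x)=0$ by Proposition~\ref{KLR-S2.1-P1}($d$), so $\inf_{y\in C}V(jx,y)=0$ (it is nonnegative by part ($c$) and attained at $x$). Hence $V(jx,z)=0$ as well, and applying Proposition~\ref{KLR-S2.1-P1}($d$) once more gives $jx\in Jz$. Therefore $jx\in Jx\cap Jz$, so $z\in\mathcal{J}(x)$, and $z\in\mathcal{J}(x)\cap C$. Note this is essentially the mechanism behind the ``$\pi$ is fixed on $C$'' statement Proposition~\ref{KLR-S2.1-P1}($e$), applied with the common functional $jx$.

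The argument is short and the main (mild) obstacle is bookkeeping: making sure that when $x\in C$ the infimum of $V(jx,\cdot)$ over $C$ is genuinely $0$ --- which needs both the lower bound $V\ge 0$ from Proposition~\ref{KLR-S2.1-P1}($c$) and the vanishing $V(jx,x)=0$ from part ($d$) --- and keeping straight which functional from $Jx$ is being used in the union \eqref{KLR-S3.1-D1-E1}. No reflexivity, convexity, or closedness of $C$ is required, which is consistent with the generality of the statement. I would present parts ($a$) and ($b$) in that order, reusing the computation from ($a$) inside ($b$).
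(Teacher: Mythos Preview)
Your proposal is correct and follows essentially the same route as the paper: for ($a$) you pick a common functional $j\in Jx\cap Jy$ and use $V(j,y)=0\le V(j,\cdot)$ to place $y$ in $\pi_{\text{\tiny{\emph{C}}}}(j)\subseteq\Pi_{\text{\tiny{\emph{C}}}}(x)$; for ($b$) you use $x\in C$ to force $V(jx,z)=0$ and hence $jx\in Jz$. The only cosmetic difference is that the paper writes out the computation $(\|jx\|-\|z\|)^2\le V(jx,z)=0$ explicitly, whereas you invoke Proposition~\ref{KLR-S2.1-P1}($c$),($d$) by name.
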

\begin{proof} ($a$) Since $\displaystyle \Pi_{\text{\tiny{\emph{C}}}}(x)=\cup_{jx\in Jx}\pi_{\text{\tiny{\emph{C}}}}(jx)$, for any fixed $x\in B$, we only need to prove that
$$\cup_{jx\in Jx}\pi_{\text{\tiny{\emph{C}}}}(jx)\supseteq \mathcal{J}(x)\cap C,\quad \text{for every}\ x\in B.$$
For any $z\in \mathcal{J}(x)\cap C$, there is $jx \in Jx\cap Jz$. Therefore,
$$V(jx,z)=\|jx\|^2-2\langle jx,z\rangle+\|z\|^2 =0\leq V(jx,y),\quad \text{for every}\ y\in C,$$
which confirms that $z\in \pi_{\text{\tiny{\emph{C}}}}(jx)$ and subsequently proving the desired containment:
$$\mathcal{J}(x)\cap C\subset \cup_{jx\in Jx}\pi_{\text{\tiny{\emph{C}}}}(jx).$$

As a particular implication of ($a$), we proved $\Pi_{\text{\tiny{\emph{C}}}}(x)\supseteq \mathcal{J}(x)\cap C,$ for every $x\in C.$ On the other hand, for every given $x\in C$ and for any $z\in \cup_{jx\in Jx}\pi_{\text{\tiny{\emph{C}}}}(jx)\subseteq C$, there exists $jx\in Jx$ such that $z\in \pi_{\text{\tiny{\emph{C}}}}(jx)$. Since $x\in C$, by \eqref{KLR-S2.1-D1-E1}, it follows that
$$V(jx,z)=\|jx\|^2-2\langle jx,z\rangle+\|z\|^2 \leq V(jx,x)=\|jx\|^2-2\langle jx,x\rangle+\|x\|^2=0,$$
and hence
$$0\leq (\|jx\|-\|z\|)^2\leq \|jx\|^2-2\langle jx,z\rangle+\|z\|^2=0,$$
which  implies that $\|jx\|=\|z\|$ and $\langle jx,z\rangle=\|jx\|^2=\|z\|^2. $ Consequently, $jx\in Jz$, ensuring that $jx\in Jx\cap Jz$ and hence $z\in \mathcal{J}(x)\cap C.$ Thus: $\mathcal{J}(x)\cap C\supset \cup_{jx\in Jx}\pi_{\text{\tiny{\emph{C}}}}(jx)=\Pi_{\text{\tiny{\emph{C}}}}(x).$ \end{proof}

We next give examples to show that, in a non-strictly convex Banach space $B$, there are $C\subset B$ and $x\in C$ such that $\Pi_{\text{\tiny{\emph{C}}}}(x)$ is not a singleton. That is, in general, $\{x\}\subset \Pi_{\text{\tiny{\emph{C}}}}(x),$ for $x\in C.$
\begin{ex}\label{KLR-S4.1-Ex2} Let $S$ be the closed unit ball of $\ell_1$ and let $D$ be the infinite dimensional simplex in $\ell_1$, which is a nonempty, closed, and convex subset of $S$. Then:
\begin{description}
\item[($a$)] $\Pi_{\text{\tiny{\emph{S}}}}(x)= \mathcal{J}(x)\cap S=D,$ for every $x\in D$ with all positive entries.
\item[($b$)] $\Pi_{\text{\tiny{\emph{S}}}}(x)=\mathcal{J}(x)\cap S\supset D,$ for every $x\in D$ with at least one zero entry.
\end{description}
\end{ex}
\begin{proof} ($a$). By Part ($b$) of Example~\ref{KLR-S4.1-Ex1} and Theorem~\ref{KLR-S4.1-T1}, we have
\begin{equation}\label{KL-P1-S4-SS1-Ex2-E1}
\Pi_{\text{\tiny{\emph{S}}}}(x)=\mathcal{J}(x)\cap S\supseteq D,\quad \text{for every}\ x\in D.
\end{equation}
On the other hand, let $x=(t_1,t_2,\ldots)\in D$ be an arbitrary fixed point such that all its coordinates are positive. For any $y\in S\backslash D$, and for any $jx\in Jx$, we have
$$0\leq V(jx,y)=\|jx\|^2-2\langle jx,y\rangle+\|y\|^2 =1-2\langle jx,y\rangle+\|y\|^2.$$
We will now study two cases for $\|y\|:$\\
\textbf{Case 1.} $\|y\|<1$. Then $V(jx,y)\geq 1-2\|y\|+\|y\|^2>0.$ Since $x\in D$ and $V(jx,x)=0,$ we get that
$$y\notin \Pi_{\text{\tiny{\emph{S}}}}(x),\quad \text{for}\ y\in S\backslash D,\ \text{with}\ \|y\|<1.$$
\textbf{Case 2.} $\|y\|=1.$ Let $y=(s_1,s_2,\ldots)$. Then $-1\leq s_n\leq 1$, for $n\in \mathds{N},$ and $0\leq t_n\leq 1$, for $n\in \mathds{N}.$ They satisfy $\displaystyle \|x\|=\sum_{n=0}^{\infty}t_n=1$ and $\displaystyle \|y\|=\sum_{n=1}^{\infty}|s_n|=1.$ For the given $jx\in Jx$, let $jx=(u_1,u_2,\ldots)\in \ell_1^*$ with $\|jx\|=1.$ Then $-1\leq u_n\leq 1$ for $n\in \mathds{N}$. From $\displaystyle 1=\langle jx,x\rangle=\sum_{n=0}^{\infty}t_nu_n$
 and $\displaystyle \sum_{n=1}^{\infty}t_n=1,$ $t_n>0$ for $n\in \mathds{N},$ we deduce that $u_n=1$, for $n\in \mathds{N}.$

 To obtain a contradiction, assume that $y\in \Pi_{\text{\tiny{\emph{S}}}}(x)$. From $V(jx,x)=0$, we have
 $$V(jx,y)=1-2\langle jx,y\rangle +1=0.$$
 From $y\in S\backslash D$, we have $\{n|\ s_n<0\}\ne \emptyset$. From the above equation and $u_n=1$, for $n\in \mathds{N}$, we have
 $$1=\langle jx,y\rangle=\sum_{n=0}^{\infty}s_nu_n=-\sum_{s_n<0} |s_n|+\sum_{s_n>0}s_n<\|y\|=1. $$
This contradiction shows that $y\notin \Pi_{\text{\tiny{\emph{C}}}}(x),$ for $y\in S\backslash D\ \text{with}\ \|y\|=1.$
Combining these two cases, we obtain $\Pi_{\text{\tiny{\emph{C}}}}(x)\cap S\backslash D=\emptyset.$  In view of \eqref{KL-P1-S4-SS1-Ex2-E1} and the above equation, part (a) is proved.

(b) Suppose $x=(t_1,t_2,\dots)\in D$ and $x$ has at least one zero entry. Define $j=(v_1,v_2,\ldots)\in \ell_1^*=\ell_{\infty}$ with $\|j\|=1$ by
$$v_n=\left\{\begin{array}{lll} 1 & \text{if} &t_n>0\\ -1 & \text{if} & t_n=0.\end{array}\right.$$
It is clear that $j\in Jx.$ Take $y=(s_1,s_2,\ldots)\in S$ with $\displaystyle \|y\|=\sum_{n=0}^{\infty}|s_n|=1$ such that
$$s_n=\left\{\begin{array}{lll} =0 & \text{if} & v_n=1\\ <0 & \text{if} & v_n=-1.\end{array}\right.$$
Then,
$$1=\|y\|=\sum_{n=0}^{\infty}|s_n|=-\sum_{v_n=-1}s_n+\sum_{v_n=1}0=\sum_{n=0}^{\infty}s_nv_n=\langle j,y\rangle=1=\|j\|,$$
confirming that $j\in Jx\cap Jy.$ So $y\in \mathcal{J}(x)\cap S.$ It is clear that $y\notin D.$ Thus, part (b) is proved.
 \end{proof}
\begin{ex}\label{KLR-S4.1-Ex3} Let $\phi=(1,1,1,\ldots)\in \ell_{\infty}$. Take three elements in $\ell_1$:
$$u=\left(\frac12,\frac{1}{2^2},\frac{1}{2^3},\ldots\right),\quad v=\left(\frac23,\frac{2}{3^2},\frac{2}{3^3},\ldots\right), \quad \text{and}\quad w=(8,0,0,\ldots).$$
Let $C=\overline{\text{co}}\{u,v,w\}$. Clearly, the set $C$ is a nonempty, closed, and convex subset of $\ell_1.$ Then:
\begin{description}
\item[($a$)] $\phi \in J(x)$, for every $x\in \overline{\text{co}}\{u,v\}.$
\item[($b$)] $\mathcal{J}(x)\cap C\supseteq \overline{\text{co}}\{u,v\}$, for every $x\in \overline{\text{co}}\{u,v\}.$
\item[($c$)] $\Pi_{\text{\tiny{\emph{C}}}}(x)\supseteq \overline{co}\{u,v\}$, for every $x\in \overline{\text{co}}\{u,v\}.$
\end{description}
\end{ex}
\begin{proof} The proof follows from Example~\ref{KLR-S4.1-Ex2} and Theorem~\ref{KLR-S4.1-T1}.
\end{proof}

The next result gives a sufficient condition to validate \eqref{KLR-S4.1-E1}.
\begin{co}\label{KLR-S4.1-C1} Let $B$ be a strictly convex Banach space and let $C$ be a nonempty subset of $B$. Then,
$$\Pi_{\text{\tiny{\emph{C}}}}(x)=x,\quad \text{for every}\ x\in C.$$
\end{co}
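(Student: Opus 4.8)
The plan is to deduce Corollary~\ref{KLR-S4.1-C1} directly from Theorem~\ref{KLR-S4.1-T1}(b) together with the characterization of strictly convex spaces via generalized identical points in Proposition~\ref{KLR-S4.1-P1}. First I would fix an arbitrary $x\in C$ and invoke part (b) of Theorem~\ref{KLR-S4.1-T1}, which gives $\Pi_{\text{\tiny{\emph{C}}}}(x)=\mathcal{J}(x)\cap C$. The only remaining task is to identify $\mathcal{J}(x)\cap C$ with the singleton $\{x\}$.

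Second I would use the hypothesis that $B$ is strictly convex. By Proposition~\ref{KLR-S4.1-P1}, strict convexity of $B$ is equivalent to $\mathcal{J}(x)=\{x\}$ for every $x\in B$. Hence $\mathcal{J}(x)\cap C=\{x\}\cap C=\{x\}$, where the last equality holds precisely because $x\in C$. Combining this with the identity from the first step yields $\Pi_{\text{\tiny{\emph{C}}}}(x)=\{x\}$, which is the desired conclusion. No estimate on the Lyapunov functional is needed here since all the work has already been packaged into Theorem~\ref{KLR-S4.1-T1}.

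There is essentially no obstacle: the statement is a one-line corollary, and the only thing to be careful about is the direction in which strict convexity is used. Theorem~\ref{KLR-S4.1-T1}(b) holds for an arbitrary Banach space and an arbitrary nonempty subset $C$, so the equality $\Pi_{\text{\tiny{\emph{C}}}}(x)=\mathcal{J}(x)\cap C$ is available unconditionally; it is only in collapsing $\mathcal{J}(x)$ to $\{x\}$ that strict convexity (via property $(J_7)$ of the normalized duality map, as recorded in Proposition~\ref{KLR-S4.1-P1}) is invoked. One should also note, for clarity of exposition, that the examples preceding the corollary (Examples~\ref{KLR-S4.1-Ex1}--\ref{KLR-S4.1-Ex3}) show this conclusion genuinely fails without strict convexity, so the hypothesis cannot be dropped.

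Thus the proof reduces to: apply Theorem~\ref{KLR-S4.1-T1}(b) to get $\Pi_{\text{\tiny{\emph{C}}}}(x)=\mathcal{J}(x)\cap C$; apply Proposition~\ref{KLR-S4.1-P1} to get $\mathcal{J}(x)=\{x\}$; intersect with $C$ and use $x\in C$ to conclude $\Pi_{\text{\tiny{\emph{C}}}}(x)=\{x\}$ for every $x\in C$.
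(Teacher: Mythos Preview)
Your proposal is correct and matches the paper's own argument essentially line for line: the paper invokes property $(J_7)$ (equivalently, Proposition~\ref{KLR-S4.1-P1}) to get $\mathcal{J}(x)=\{x\}$ under strict convexity and then applies Theorem~\ref{KLR-S4.1-T1}. The only cosmetic difference is that the paper cites $(J_7)$ directly rather than the repackaged Proposition~\ref{KLR-S4.1-P1}.
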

\begin{proof}By property $(J_7)$ of the normalized duality map, the strict convexity of $B$ implies that $J$ is one-to-one, that is, $J(x)\cap J(y)=\emptyset$, for any $x,y\in B$ with $x\ne y. $ The claim then follows at once from Theorem~\ref{KLR-S4.1-T1}.
\end{proof}
\begin{rem}\label{KLR-S4.1-R1}
By Corollary~\ref{KLR-S4.1-C1}, for any strictly convex Banach space $B$ and an arbitrary nonempty subset $C$ of $B$, the generalized metric projection $\Pi_{\text{\tiny{\emph{C}}}}$ coincides with the standard metric projection $P_{\text{\tiny{\emph{C}}}}$ on $C$. That is, $\Pi_{\text{\tiny{\emph{C}}}}(z)=P_C(z)$, for every $z\in C$ and if $B$ is strictly convex. Theorem~\ref{KLR-S4.1-T1} and Examples~\ref{KLR-S4.1-Ex1} and \ref{KLR-S4.1-Ex2} show that if $B$ is a non-strictly convex Banach space and $C$ is an arbitrary subset of $B$, the generalized metric projection $\Pi_{\text{\tiny{\emph{C}}}}$ may be different from the metric projection $P_{\text{\tiny{\emph{C}}}}$ on $C$. In general, $\Pi_{\text{\tiny{\emph{C}}}}(z)\ni P_C(z)=z$, for every $z\in C$ and $B$ is non-strictly convex. As studied by Alber and Li~\cite{AlbLi07}, even in uniformly convex and uniformly smooth Banach spaces,  $\Pi_{\text{\tiny{\emph{C}}}}$ is different from $P_{\text{\tiny{\emph{C}}}}$. From the above examples and theorems, for a general Banach space $B$ and a nonempty subset $C$ of $B$, we showed that, in general, $\Pi_{\text{\tiny{\emph{C}}}}\ne P_{\text{\tiny{\emph{C}}}}.$
\end{rem}

We will need the following result shortly:
\begin{lm}\label{KLR-S1.1-L2} For any $r>0$ and for $\beta_r :=(r,r,\ldots)\in \ell_{\infty},$ we have
$$x\in D^+(r)\quad \Rightarrow\quad Jx=\beta_r\quad \text{and}\quad x\in D(r)\backslash D^+(r)\quad \Rightarrow\quad Jx \varsupsetneq \{\beta_r\}.$$
\end{lm}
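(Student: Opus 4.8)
The plan is to compute $Jx$ directly from the definition of the normalized duality map in the space $\ell_1$, using the explicit description of the dual pairing between $\ell_1$ and $\ell_\infty$. Fix $r>0$ and $x=(x_1,x_2,\ldots)\in D(r)$, so that $x_n\ge 0$ for all $n$ and $\sum_{n=1}^\infty x_n=\|x\|=r$ (here I use Lemma~\ref{KLR-S1.1-L1}(a), which gives $D(r)\subseteq T(r)$, to identify $\|x\|_{\ell_1}$ with $\sum x_n$). An element $\phi=(\phi_1,\phi_2,\ldots)\in\ell_\infty$ belongs to $Jx$ precisely when $\|\phi\|_\infty=\|x\|_{\ell_1}=r$ and $\langle\phi,x\rangle=\|x\|_{\ell_1}^2=r^2$. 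First I would observe that $\beta_r=(r,r,\ldots)$ indeed lies in $Jx$: $\|\beta_r\|_\infty=r$ and $\langle\beta_r,x\rangle=\sum_n r\,x_n=r\cdot r=r^2$, so $\beta_r\in Jx$ always holds for $x\in D(r)$.

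Next I would prove the first implication: if $x\in D^+(r)$, i.e. $x_n>0$ for every $n$, then $Jx=\{\beta_r\}$. Suppose $\phi\in Jx$, so $\phi_n\le r$ for all $n$ (since $\|\phi\|_\infty=r$) and $\sum_n\phi_n x_n=r^2=\sum_n r\,x_n$. Rearranging, $\sum_n (r-\phi_n)x_n=0$, a sum of nonnegative terms $(r-\phi_n)x_n\ge 0$. Since each $x_n>0$, every term must vanish, forcing $\phi_n=r$ for all $n$; hence $\phi=\beta_r$. This establishes $Jx=\{\beta_r\}$ when $x\in D^+(r)$.

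For the second implication, suppose $x\in D(r)\setminus D^+(r)$, so $x_k=0$ for at least one index $k$. Then I would exhibit an element of $Jx$ other than $\beta_r$: take $\phi$ with $\phi_n=r$ when $x_n>0$ and $\phi_k=-r$ (or more generally any value in $[-r,r]$) at an index $k$ with $x_k=0$. Then $\|\phi\|_\infty=r$, and $\langle\phi,x\rangle=\sum_{n}\phi_n x_n=\sum_{x_n>0} r\,x_n = r^2$, since the terms with $x_n=0$ contribute nothing regardless of the value of $\phi_n$ there. Thus $\phi\in Jx$ and $\phi\ne\beta_r$, so $Jx\varsupsetneq\{\beta_r\}$.

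The main subtlety to handle carefully — really the only obstacle — is the bookkeeping with the pairing and the norm on $\ell_1$: one must be sure that for $x\in D(r)$ the $\ell_1$-norm equals $\sum_n x_n$ (clear from nonnegativity of the entries together with $x\in T(r)$), that $\|x\|_{\ell_1}^2=r^2$, and that the supremum norm constraint $\|\phi\|_\infty=r$ is exactly what is needed — not merely $\le r$. One should note that the condition $\langle\phi,x\rangle=\|\phi\|_\infty\|x\|_{\ell_1}$ in the definition of $J$ is automatically consistent here since $\|\phi\|_\infty\|x\|_{\ell_1}=r^2$. Everything else is a short direct verification; no delicate limiting or topological argument is required, in contrast to several of the earlier examples in the paper. (Note also that this lemma supersedes the earlier computation needed for Lemma~\ref{KLR-S4.1-Ex4.3}, since $\mathcal{J}x$ is governed by $Jx$.)
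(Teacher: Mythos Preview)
Your proof is correct. The paper actually states Lemma~\ref{KLR-S1.1-L2} without proof, but the same computations appear elsewhere: the last paragraph of the proof of Example~\ref{KLR-S4.2-Ex1} establishes the first implication via exactly your argument (from $\sum_n \lambda_n t_n = a^2$, $\sum_n t_n = a$, $|\lambda_n|\le a$, and $t_n>0$ one forces $\lambda_n=a$ for all $n$), and part~(b) of Example~\ref{KLR-S4.1-Ex2} constructs, for $x\in D$ with a zero entry, the element $j\in Jx$ given by $v_n=1$ when $t_n>0$ and $v_n=-1$ when $t_n=0$, which is precisely your witness $\phi\ne\beta_r$. So your approach coincides with what the paper does in those nearby places.
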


The next example, which follows from Theorem~\ref{KLR-S4.1-T1}, highlights, once again, the most significant differences between the generalized metric projection and the standard metric projection.
\begin{ex}\label{KLR-S4.1-Ex4} Let $r>0$ be arbitrary. Then:
\begin{description}
\item[($a$)] $\Pi_{\text{\tiny{\emph{S(r)}}}}(x)=\mathcal{J}(x)\cap S(r)=D(r)\ni x,$ for $x\in D(r)$.
\item[($b$)] $P_{\text{\tiny{\emph{S(r)}}}}(x)= x,$ for $x\in D(r)$.
\item[($c$)] $\Pi_{\text{\tiny{\emph{S(r)}}}}(y)= D(r),$ for any $y\in D^+(h)$ with $h>r$.
\item[($d$)] $P_{\text{\tiny{\emph{S(r)}}}}(y)\subset D(r),$ for any $y\in D^+(h)$ with $h>r$. That is, $\Pi_{\text{\tiny{\emph{S(r)}}}}(y)\ne \emptyset$ is closed, and convex subset of $D(r).$
\end{description}
\end{ex}
\begin{proof} Part ($a$) follows from Theorem~\ref{KLR-S4.1-T1}, whereas ($b$) is trivial. To prove ($c$), we pick $y\in D^+(h)$ and note that due to Lemma~\ref{KLR-S1.1-L2}, we have $Jx=\beta_h=(h,h,\ldots)\in \ell_{\infty}$, which is a singleton. It follows that
$$\Pi_{\text{\tiny{\emph{S(r)}}}}(y)=\pi_{\text{\tiny{\emph{S(r)}}}}(\beta_h),\quad \text{for any}\ y\in D^+(h).$$
For any $z=(z_1,z_2,\ldots)\in D(r)$, we have
\begin{equation}\label{KLR-S4.1-Ex4-E1}
V(\beta_h,z)=\|\beta_h\|^2-2\langle \beta_h,z\rangle+\|z\|^2=(h-r)^2.
\end{equation}
Clearly, $V(\beta_h,\theta)=h^2$. For any $w=(w_1,w_2,\ldots)\in S(r)\backslash \left(D(r)\cup \{\theta\}\right)$, we have
\begin{equation}\label{KLR-S4.1-Ex4-E2}
V(\beta_h,z)=\|\beta_h\|^2-2\langle \beta_h,w\rangle+\|w\|^2> \|\beta_h\|^2-2h\|w\|+\|w\|^2=(h-\|w\|)^2\geq (h-r)^2.
\end{equation}
By combining \eqref{KLR-S4.1-Ex4-E1} and \eqref{KLR-S4.1-Ex4-E2}, we establish ($c$).

Finally, we proceed to prove ($d$). For any $w=(w_1,w_2,\ldots)\in D(r)$, we have
\begin{equation}\label{KLR-S4.1-Ex4-E3}
\|y-w\|\geq \|y\|-\|w\|\geq h-r.
\end{equation}
We set $v=\frac{r}{h}y$. Since $y\in D^+(h)$ and $\frac{r}{h}<1$, we have $v\in D^+(r)$ and
\begin{equation}\label{KLR-S4.1-Ex4-E4}
\|y\|-\|v\|=\left\|y-\frac{r}{h}y\right\|=\left(1-\frac{r}{h}\right)h=h-r.
\end{equation}
Combining \eqref{KLR-S4.1-Ex4-E3} and \eqref{KLR-S4.1-Ex4-E4}, we get $\frac{r}{h}y\in P_{\text{\tiny{\emph{S(r)}}}}(x).$ Thus, $P_{\text{\tiny{\emph{S(r)}}}}(y)$ is a nonempty subset of $D(r)$. The space $\ell_1$ is not strictly convex, and it is straightforward to show that $P_{\text{\tiny{\emph{S(r)}}}}(x)$ is closed and convex.

Next, we show that $P_{\text{\tiny{\emph{S(r)}}}}(y)$ is a proper subset of $D(r)$. Since $y\in D^+(h)$, taking $y=(y_1,y_2,\ldots)$, it follows that $\displaystyle \lim_{n\to \infty}y_n=0.$ Then, there are positive integers $m<k$ such that $y_m>y_k>0.$ We define $e_i\in D(r)$ with $i$-th entry $r$ and all other entries $0$, for $i=m,k$. By \eqref{KLR-S4.1-Ex4-E3}, it follows that
$$h-r\leq \|y-e_m\|=\sum_{n\ne m,k}y_n+r-y_m+y_k<\sum_{n\ne m,k}y_n+r-y_k+y_m=\|y-e_k\|.$$
Then, $e_n\notin P_{\text{\tiny{\emph{S(r)}}}}(y)$ (even though we don't know if $e_m\in P_{\text{\tiny{\emph{S(r)}}}}(y)$). The proof is complete.
\end{proof}

For any non-strictly convex Banach space $B$ and nonempty subset of $B$, as a consequence of Theorem~\ref{KLR-S4.1-T1} and Example~\ref{KLR-S4.1-Ex4}, we have
$$P_{\text{\tiny{\emph{C}}}}(z)=z\in \Pi_{\text{\tiny{\emph{C}}}}(z),\quad \text{for every}\ z\in C.$$
\subsection{The inverse of the normalized duality map and the generalized metric projection $\Pi$ }\label{KLR-S4.2}
Let $B$ be a Banach space with dual $B^*$. For any $\phi \in B^*,$ we let $J^{-1}(\phi )=\{x\in B| \ \phi \in Jx\}.$ Then, for any $x\in J^{-1}\phi $, $\|x\|=\|\phi \|.$ $J^{-1}\phi $ is called the normalized duality inverse  of $\phi \in B^*.$

The following example illustrates this notion of the inverse of the normalized duality map.
\begin{ex}\label{KLR-S4.2-Ex1} For any $a>0$, take $\beta_a =(a,a,\ldots)\in \ell_{\infty}.$ Then,
$$J^{-1}\beta_a =\{y=(t_1,t_2,\ldots)\in \ell_1:\ \ t_n\geq 0,\quad \text{for all}\ n\ \text{and}\ \|y\|=a\}$$
For $y=(y_1,y_2,\ldots)\in \ell_1$, if $y_n>0$, for all $n$ and $\|y\|=a$, then $Jy$ is a singleton and $Jy=\beta_a .$
\end{ex}
\begin{proof} Let $y=(t_1,t_2,\ldots)\in \ell_1$. Suppose that $y\in J^{-1}\beta_a $. Then, $\beta_a \in Jy$, that is,
$$\|y\|^2=\langle \beta_a ,y\rangle=\|\beta_a \|^2=a^2.$$
Since $\displaystyle \langle \beta_a ,y\rangle=a\sum_{n=1}^{\infty} t_n=a^2$, we get $\displaystyle \sum_{n=1}^{\infty} t_n=a.$ By $\displaystyle \|y\|=\sum_{n=1}^{\infty} |t_n|=a=\sum_{n=1}^{\infty} t_n$, we get $t_n\geq 0$, for $n\in \mathds{N}.$

On the other hand, if $y=(t_1,t_2,\ldots)\in \ell_1$ satisfy $t_n\geq 0$, for all $n$ and $\|y\|=a$, then, we have $\|y\|^2=\langle \beta_a,y\rangle=\|\beta_a \|^2=a^2$. That is, $\beta_a \in Jy.$

Now let $y=(t_1,t_2,\ldots)\in \ell_1$ such that $t_n>0,$ for all $n$ and $\|y\|=a.$ Take an arbitrary $jy=(\lambda_1,\lambda_2,\ldots)\in Jy$. Then $\|jy\|^2=\langle jy,y\rangle=\|y\|^2=a^2$. It follows that $\|jy\|=\sup_{n\in N}|\lambda_n|=a.$ Then, using $\displaystyle \langle jy,y\rangle= \sum_{n=1}^{\infty} t_n\lambda_n=a^2,$ $\displaystyle \sum_{n=1}^{\infty}t_n=a$, we get $\lambda_n=a$, for all $n\in \mathds{N}.$ Thus, $jy=\beta_a .$ \end{proof}

We have the following simple observations.
\begin{pr}\label{KLR-S4.2-P0} Let $B$ be a Banach space and let $C$ be a nonempty subset of $B$. Then, for any $\phi\in B^*$, and for any $x,y\in J^{-1}\phi$, we have
$$\pi_{\text{\tiny{\emph{C}}}}(\phi)\ne \emptyset\quad \Rightarrow\quad \Pi_{\text{\tiny{\emph{C}}}}(x)\cap \Pi_{\text{\tiny{\emph{C}}}}(y)\ne \emptyset.$$
\end{pr}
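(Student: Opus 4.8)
The plan is to observe that the statement is an immediate consequence of the definition of the generalized metric projection in \eqref{KLR-S3.1-D1-E1}. First I would unpack the hypothesis $x,y\in J^{-1}\phi$: by the definition of the normalized duality inverse, this means precisely $\phi\in Jx$ and $\phi\in Jy$, so $\phi\in Jx\cap Jy$.

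Next, since $\phi\in Jx$, the set $\pi_{\text{\tiny{\emph{C}}}}(\phi)$ is one of the sets appearing in the union $\Pi_{\text{\tiny{\emph{C}}}}(x)=\cup_{jx\in Jx}\pi_{\text{\tiny{\emph{C}}}}(jx)$, hence $\pi_{\text{\tiny{\emph{C}}}}(\phi)\subseteq \Pi_{\text{\tiny{\emph{C}}}}(x)$; by the same reasoning applied to $Jy$, we also have $\pi_{\text{\tiny{\emph{C}}}}(\phi)\subseteq \Pi_{\text{\tiny{\emph{C}}}}(y)$. Combining these two inclusions yields $\pi_{\text{\tiny{\emph{C}}}}(\phi)\subseteq \Pi_{\text{\tiny{\emph{C}}}}(x)\cap \Pi_{\text{\tiny{\emph{C}}}}(y)$.

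Finally, under the assumption $\pi_{\text{\tiny{\emph{C}}}}(\phi)\ne\emptyset$, the displayed inclusion forces $\Pi_{\text{\tiny{\emph{C}}}}(x)\cap\Pi_{\text{\tiny{\emph{C}}}}(y)\ne\emptyset$, which is the desired conclusion. There is no genuine obstacle here: the result is purely set-theoretic bookkeeping against Definition~\ref{KLR-S3.1-D1}, and requires no appeal to the Lyapunov functional, reflexivity, or any structural property of $B$; the only thing to be careful about is correctly translating "$x\in J^{-1}\phi$" into "$\phi\in Jx$".
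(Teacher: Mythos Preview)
Your proof is correct and follows exactly the approach the paper has in mind: the paper actually states this proposition as a ``simple observation'' without proof, and the very next result (Proposition~\ref{KLR-S4.2-P1}) is proved by the same one-line definition unpacking you give, namely $x\in J^{-1}\phi\Rightarrow\phi\in Jx\Rightarrow\pi_{\text{\tiny{\emph{C}}}}(\phi)\subseteq\Pi_{\text{\tiny{\emph{C}}}}(x)$.
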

\begin{pr}\label{KLR-S4.2-P1} Let $B$ be a Banach space with dual $B^*$ and let $\emptyset \ne C\subset B$. Then, for any $\phi \in B^*$,
$$\pi_{\text{\tiny{\emph{C}}}}(\phi )\subset \cap \{\Pi_{\text{\tiny{\emph{C}}}}(x):\ x\in J^{-1}\phi \}.$$
\end{pr}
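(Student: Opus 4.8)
The plan is to unwind the definitions; no analytic machinery is needed here, since the inclusion is a direct consequence of the way $\Pi_{\text{\tiny{\emph{C}}}}$ is assembled from $\pi_{\text{\tiny{\emph{C}}}}$ in Definition~\ref{KLR-S3.1-D1}. First I would dispose of the degenerate situations. If $\pi_{\text{\tiny{\emph{C}}}}(\phi)=\emptyset$, the claimed containment holds vacuously. If $J^{-1}\phi=\emptyset$, then the intersection on the right-hand side is taken over the empty index set, hence equals the ambient set $C$ (recall that $\Pi_{\text{\tiny{\emph{C}}}}$ takes values in $2^C$), and again there is nothing to prove since $\pi_{\text{\tiny{\emph{C}}}}(\phi)\subseteq C$. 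Thus I may assume both $\pi_{\text{\tiny{\emph{C}}}}(\phi)\ne\emptyset$ and $J^{-1}\phi\ne\emptyset$.

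Next I would fix an arbitrary $z\in \pi_{\text{\tiny{\emph{C}}}}(\phi)$ and an arbitrary $x\in J^{-1}\phi$. By the definition of the normalized duality inverse, $x\in J^{-1}\phi$ means precisely that $\phi\in Jx$. Consequently $\phi$ is one of the functionals over which the union in \eqref{KLR-S3.1-D1-E1} ranges, so that
\[
\pi_{\text{\tiny{\emph{C}}}}(\phi)\ \subseteq\ \bigcup_{jx\in Jx}\pi_{\text{\tiny{\emph{C}}}}(jx)\ =\ \Pi_{\text{\tiny{\emph{C}}}}(x),
\]
and in particular $z\in \Pi_{\text{\tiny{\emph{C}}}}(x)$. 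Since $x$ was an arbitrary element of $J^{-1}\phi$, it follows that $z\in \Pi_{\text{\tiny{\emph{C}}}}(x)$ for every $x\in J^{-1}\phi$, i.e. $z\in \bigcap\{\Pi_{\text{\tiny{\emph{C}}}}(x):x\in J^{-1}\phi\}$. As $z\in\pi_{\text{\tiny{\emph{C}}}}(\phi)$ was arbitrary, this establishes the inclusion.

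There is essentially no hard step: the only point that merits care is the bookkeeping around the degenerate cases (empty generalized projection, or $\phi$ lying outside the range of $J$, which can genuinely occur when $B$ fails to be reflexive), together with keeping in mind that $\|x\|=\|\phi\|$ for every $x\in J^{-1}\phi$ — a remark already recorded just before the statement — so that all the sets under consideration sit at the same norm level. One may also note in passing that Proposition~\ref{KLR-S4.2-P0} is the immediate consequence obtained by specializing to two points $x,y\in J^{-1}\phi$ and intersecting, which serves as a useful consistency check on the argument.
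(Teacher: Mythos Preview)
Your argument is correct and follows essentially the same route as the paper: for any $x\in J^{-1}\phi$ one has $\phi\in Jx$, hence $\pi_{\text{\tiny{\emph{C}}}}(\phi)\subseteq \bigcup_{jx\in Jx}\pi_{\text{\tiny{\emph{C}}}}(jx)=\Pi_{\text{\tiny{\emph{C}}}}(x)$, and intersecting over $x$ gives the claim. Your additional handling of the degenerate cases ($\pi_{\text{\tiny{\emph{C}}}}(\phi)=\emptyset$ or $J^{-1}\phi=\emptyset$) is a harmless elaboration that the paper omits.
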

\begin{proof}  For any $x\in J^{-1}\phi $, it follows that $\phi \in Jz$, implying that  $\pi_{\text{\tiny{\emph{C}}}}(\phi )\subset \Pi_C(x)$, for any $x\in J^{-1}\phi.$
\end{proof}

The following example is an application of Proposition~\ref{KLR-S4.2-P1}.
\begin{ex}\label{KLR-S4.2-Ex2} Let $L_1(\mathds{R})$ be the Banach space of absolutely integrable real functions on $\mathds{R}$ with dual space $L_1^*(\mathds{R})=L_{\infty}(\mathds{R})$. For any $x\in L_1(\mathds{R})$, we have
$$\|x\|=\int_{-\infty}^{\infty}|x(t)|\,dt.$$ Let $G$ be the closed unit ball in $L_1(\mathds{R})$. For $i=1,2$, let
$$H_i=\{x\in L_1(\mathds{R}): x(t)\geq 0,\ \text{for a.e.}\ t\ \text{and}\ \|x\|=i\}$$
Let $\phi $ be the constant function on $\mathds{R}$ with value $2$ satisfying $\phi \in L_1^*(\mathds{R})$. Then,
\begin{description}
\item[($a$)] $J^{-1}\phi =H_2.$
\item[($b$)] $H_1\subseteq G$ and $\pi_{\text{\tiny{\emph{G}}}}(\phi )=H_1.$
\item[($c$)] $H_1=\cap \{\Pi_{\text{\tiny{\emph{G}}}}(x)|x\in H_2\}.$
\end{description}
\end{ex}
\begin{proof} ($a$). For any $x\in H_2$, we have
$$\|x\|^2=\left(\int_{-\infty}^{\infty}|x(t)|\,dt\right)^2=\left(\int_{-\infty}^{\infty}x(t)\,dt\right)^2=\int_{-\infty}^{\infty}2x(t)\,dt=\langle \phi ,x\rangle=4=\|\phi \|^2,$$
implying that $\phi \in Jx$. Hence,
\begin{equation}\label{KLR-S4.2-Ex2-E1}
H_2\subset J^{-1}\phi .
\end{equation}
On the other hand, for any $y\in J^{-1}\phi $, it follows that $\phi \in Jy$. Then,
$$4=\|\phi \|^2=\langle \phi ,y\rangle=\int_{-\infty}^{\infty}2y(t)\,dt =\|y\|^2=\left(\int_{-\infty}^{\infty}|y(t)|\,dt\right)^2,$$
which implies that
$$\int_{-\infty}^{\infty}|y(t)|\,dt=\int_{-\infty}^{\infty}y(t)\,dt=2.$$
We have,
\begin{align*}
\int_{y(t)<0}y(t)\,dt+\int_{y(t)\geq 0}y(t)\,dt=\int_{-\infty}^{\infty}y(t)\,dt=\int_{-\infty}^{\infty}|y(t)|\,dt=-\int_{y(t)<0}y(t)\,dt+\int_{y(t)\geq 0}|y(t)|\,dt,
\end{align*}
and hence
$$\int_{y(t)<0}y(t)\,dt=-\int_{y(t)<0}y(t)\,dt.$$
That is, $y(t)\geq 0$, for a.e. $t$ and $\|y\|=2$. Consequently, $y\in H_2,$ and furthermore, $H_2\supset J^{-1}\phi$, which when combined with \eqref{KLR-S4.2-Ex2-E1} proves ($a$)

We now proceed to prove ($b$). For any $z\in H_1\subset G,$ we have
\begin{equation}\label{KLR-S4.2-Ex2-E3}V(\phi ,z)=\|\phi \|^2-2\langle \phi ,z\rangle+\|z\|^2=1,\quad \text{for any}\ z\in H_1.
\end{equation}
On the other hand, for any $y\in G\backslash H_1$, we have $\|y\|\leq 1,$ for which we will analyze two cases:\\
\textbf{Case 1:} $\|y\|=1.$ Then, from $y\in G\backslash H_1$, we have
$$\int_{-\infty}^{\infty}y(t)\,dt <\int_{-\infty}^{\infty}|y(t)|\,dt =1,$$
which implies that
\begin{align}
V(\phi ,y)&=\|\phi\|^2-2\langle \phi,y\rangle+\|y\|^2=4-2\int_{-\infty}^{\infty}2y(t)\,dt+1>4-4\int_{-\infty}^{\infty}|y(t)|\,dt+1=1.\label{KLR-S4.2-Ex2-E4}
\end{align}
\textbf{Case 2:} $\|y\|<1.$ Then,
 \begin{align}
V(\phi ,y)&=\|\phi\|^2-2\langle \phi,y\rangle+\|y\|^2=4-2\int_{-\infty}^{\infty}2y(t)\,dt+1\geq 4-4\|y\|+\|y\|^2=(2-\|y\|)^2>1.\label{KL-S4-SS2-Ex2-E5}
\end{align}
Combining \eqref{KLR-S4.2-Ex2-E3}, \eqref{KLR-S4.2-Ex2-E4}, and \eqref{KL-S4-SS2-Ex2-E5}, we get ($b$). Part ($c$) follows from ($a$),  ($b$), and Proposition 4.9. \end{proof}

Proposition~\ref{KLR-S4.2-P1} and Example~\ref{KLR-S4.2-Ex2} offer the following insight: For any non-zero integer $m$, define a function $u_m$ as follows:
$$u_m(t)=\left\{\begin{array}{rll} |m|,&\text{if} & m-\frac{1}{m}\leq t\leq m+\frac{1}{m},\\
0& \text{otherwise}. & \end{array}\right.$$
Then, $u_m\in H_2$, for $m=\pm1,\pm2,\ldots.$ By Example~\ref{KLR-S4.2-Ex2} or Proposition~\ref{KLR-S4.2-P1}, we have
$$\Pi_{\text{\tiny{\emph{G}}}}(u_m)=H_1,\quad \text{for}\ m=\pm1,\pm2,\ldots$$
As $|m|\to \infty$, the graphs of $u_m$ will be very far apart from each other. However, all of them have exactly the same generalized metric projection to the closed unit ball $G$ in $L_1(\mathds{R})$.
\subsection{The inverse of the normalized duality map on $\ell_1$ }\label{KLR-S4.3}
\begin{lm}\label{KLR-S4.3-L1} For any $r>0$, let $\beta_r=(r,r,\ldots)\in \ell_{\infty}=\ell_1^{*}$. Then: $J^{-1}\beta_r=D(r).$
\end{lm}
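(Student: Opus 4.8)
The plan is to establish the set equality $J^{-1}\beta_r=D(r)$ by a direct double inclusion, unwinding the definition $J^{-1}\beta_r=\{x\in\ell_1:\ \beta_r\in Jx\}$ and using the explicit form of the pairing between $\ell_{\infty}$ and $\ell_1$. Indeed, this is precisely the first assertion of Example~\ref{KLR-S4.2-Ex1} specialized to $a=r$, so the argument is short; I record it for completeness.

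First I would note that $\|\beta_r\|_{\ell_{\infty}}=r$, so by the definition of the normalized duality map, for $x=(x_1,x_2,\ldots)\in\ell_1$ we have $\beta_r\in Jx$ if and only if $\|x\|_{\ell_1}=\|\beta_r\|_{\ell_{\infty}}=r$ and $\langle\beta_r,x\rangle=\|\beta_r\|_{\ell_{\infty}}^2=r^2$. Since $\langle\beta_r,x\rangle=\sum_{n=1}^{\infty}r\,x_n=r\sum_{n=1}^{\infty}x_n$, the second condition reduces (given $r>0$) to $\sum_{n=1}^{\infty}x_n=r$. Hence $\beta_r\in Jx$ if and only if
$$\sum_{n=1}^{\infty}x_n=r=\sum_{n=1}^{\infty}|x_n|.$$

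The remaining step is the elementary observation that, for an absolutely summable sequence, $\sum_{n}x_n=\sum_{n}|x_n|$ forces $x_n\ge 0$ for every $n$: the series $\sum_{n}(|x_n|-x_n)$ has nonnegative terms and sums to $0$, so each term vanishes. Combining this with $\|x\|_{\ell_1}=r$ shows that $\beta_r\in Jx$ is equivalent to $x$ having all nonnegative entries and $\|x\|=r$, i.e. to $x\in D(r)$ (recall \eqref{KLR-S1.1-E1}); conversely, if $x\in D(r)$ then $\sum_n x_n=\sum_n|x_n|=r$, so $\langle\beta_r,x\rangle=r^2=\|\beta_r\|^2=\|x\|^2$ and $\beta_r\in Jx$. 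This proves $J^{-1}\beta_r=D(r)$. There is no real obstacle here; the only point requiring a word of justification is the sign argument just mentioned, everything else being a direct evaluation of the $\ell_{\infty}$–$\ell_1$ duality pairing.
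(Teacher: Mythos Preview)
Your proof is correct and follows essentially the same approach as the paper's: both unwind the condition $\beta_r\in Jx$ to $\|x\|^2=\langle\beta_r,x\rangle=\|\beta_r\|^2=r^2$, reduce this to $\sum_n x_n=r=\sum_n|x_n|$, and conclude $x_n\ge 0$ for all $n$. Your reference to Example~\ref{KLR-S4.2-Ex1} with $a=r$ is also apt, as the paper's argument is indeed a repetition of that computation.
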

\begin{proof} Let $x=(t_1,t_2,\ldots)\in \ell_1$. Suppose that $x\in J^{-1}\beta_r$. It follows that $\beta_r\in Jx$. That is, $\|x\|^2=\langle \beta_r,x\rangle=\|\beta_r\|^2=r^2.$ Since
$\displaystyle \langle \beta_r,x\rangle=r\sum_{n=1}^{\infty}t_n=r^2$, we have $\displaystyle \sum_{n=1}^{\infty}t_n=r$. By $\displaystyle \|x\|=\sum_{n=1}^{\infty}|t_n|=r=\sum_{n=1}^{\infty}t_n$, we get $t_n\geq 0$, for $n\in \mathds{N}.$

On the other hand, if $x=(t_1,t_2,\ldots)\in \ell_1$ such that $t_n\geq 0,$ for all $n$, and $\displaystyle \sum_{n=1}^{\infty}t_n=r$, then we have $\|x\|^2=\langle \beta_r,x \rangle=\|\beta_r\|^2=r^2 $. That is, $\beta_r\in Jx.$ The proof is complete.
\end{proof}

By the definition of $J^{-1}$, we have the following property immediately, which are some connections between the generalized metric projection and the metric projection on the hyperplanes in $\ell_1$.
\begin{ex}\label{KLR-S4.3-Ex1}  Let $u=(3,0,0,0,\ldots)\in \ell_1\backslash T$, and $\gamma=(3,0,0,0,\ldots)\in \ell_{\infty}$. For an integer $m>1$, let
\begin{align*}
v_m&=(2,0,0,\ldots,-1,0,0,\ldots)\in T\subset \ell_1,\ \text{in which}\ -1\ \text{is the}\ m\text{th entry}.\\
\gamma_m&=(3,0,0,\ldots,-3,0,0,\ldots)\in \ell_{\infty},\ \text{in which}\ -3\ \text{is the}\ m\text{th entry}.
\end{align*}
Then:
\begin{description}
\item[($a$)] $\gamma_m\in Ju$, for every $m>1.$
\item[($b$)] $\left(J^{-1}\gamma_m \right)\cap \Gamma =v_m$, for every $m>1.$
\item[($c$)] $v_m=\pi_{\text{\tiny{\emph{T}}}}(\gamma)\subset \Pi_{\text{\tiny{\emph{T}}}}(u)$, for $m=2,3,\ldots$
\item[($d$)] $\Pi_{\text{\tiny{\emph{T}}}}(u)\supset \{v_m:\ m=2,3,\ldots\}.$
\item[($e$)] $P_{\text{\tiny{\emph{T}}}}(u)=\{ y=(y_1,y_2,\ldots)\in T|\ 1\leq y_1\leq 3,\ y_m\leq 0,\ m=2,3,\ldots \}\supset \{v_m|\ m=2,3,\ldots\}$
\item[($f$)] $\Pi_{\text{\tiny{\emph{T}}}}(u)\cap P_{\text{\tiny{\emph{T}}}}(u)\supset \{v_m|\ m=2,3,\ldots\}\cup \{(1,0,0,\ldots)\}.$
\end{description}
\end{ex}
\begin{proof} The proof of part ($a$) being straightforward, we proceed to prove ($b$). Note that
$$J^{-1}\gamma_2\supseteq \{u=(3,0,0,\ldots),(0,-3,0,0,\ldots),(2,-1,0,0,\ldots),(2.5,-0.5,0,0,\ldots),(1.8,-1.2,0,0,\ldots)\}.$$
We will only prove ($b$) for $m=2.$ All other cases are similarly proved. Let $x=(x_1,x_2,\ldots)\in J^{-1}\gamma_2$. Then, $\|\gamma_2\|^2=\langle \gamma_2,x\rangle=\|x\|^2=9.$ It follows that $$\|x\|=\sum_{n=1}^{\infty}|x_n|=3,\quad
|x_n|\leq 3,\quad n=1,2,\ldots,\quad \text{and}\quad  3x_1-3x_2=9.$$
We note that $|x_1|+|x_2|\leq \|x\|=3.$ If $|x_1|+|x_2|<3,$ then $3x_1-3x_2\leq 3(|x_1|+|x_2|)<9$, which is a contradiction. Therefore, we must have $|x_1|+|x_2|=3$, implying that $x_n=0$ for $n\geq 3.$ Summarizing, for $x\in J^{-1}\gamma_2$, for $x$ we must have
\begin{equation}\label{KLR-S4.3-Ex1-E1}
|x_1|+|x_2|=3,\quad  x_1-x_2=3,\quad |x_n|\leq 3,\ \text{for}\ n=1,2,\ldots\quad \text{and}\quad x_n=0,\ \text{for}\ n\geq 3.\end{equation}
However, for any $x=(x_1,x_2,\ldots)\in J^{-1}\gamma_2\cap T$, then, in addition to the above equations, $x$ must satisfy $x_1+x_2=1$. By solving the system of linear equations $x_1+x_2=1$, and $x_1-x_2=3$, we obtain $x_1=2$ and $x_2=-1$. That is, $x=v_2.$

We now proceed to prove part ($c$). We will only prove for $m=2.$ From part ($b$), we have $v_2=(2,-1,0,0,\ldots)\in J^{-1}\gamma_2$, which implies that $V(\gamma_2,v_2)=0$. It follows at once that $v_2\in \pi_{\text{\tiny{\emph{T}}}}(\gamma_2)$. Next, we show that $\pi_{\text{\tiny{\emph{T}}}}(\gamma_2)$ is a singleton. Suppose that $x=(x_1,x_2,\ldots)\in \pi_{\text{\tiny{\emph{T}}}}(\gamma_2)$. Since  $V(\gamma_2,v_2)=0$, we must have $V(\gamma_2,x)=0$, which implies that $x=(x_1,x_2,\ldots)\in \pi_{\text{\tiny{\emph{T}}}}\cap \left(J^{-1}\gamma_2\right).$ By \eqref{KLR-S4.3-Ex1-E1}, we have $x_1-x_2=3$ and $x_1+x_2=1$, $x_n=0$ for $n\geq 3.$ Solving for $x_1,x_2$ from the above equation, we obtain $x=v_2.$

Since ($d$) follows at once from ($a$) and ($c$), we shall now prove ($e$). For any $y=(y_1,y_2,\ldots)\in T$, for $u=(3,0,\ldots)\in \ell_1\backslash T$, we have
$$\|y-u\|=|3-y_1|+\sum_{n=2}^{\infty}|y_n|\geq |3-y_1|+\left|1-\sum_{n=2}^{\infty}y_n\right|=|3-y_1|+|1-y_1|\geq 2,$$
for any real $y_1$. One can check that for any $y=(y_1,y_2,\ldots)\in T$,
$$\|y-u\|=2\quad\Leftrightarrow\quad 1\leq y_1\leq 3,\ y_m\leq 0,\quad \text{for all}\ m=2,3,\ldots$$
It is easy to see that for every $m=2,3,\ldots,$ the elements $v_m$ satisfy the above conditions. Part ($f$) follows at once from ($d$) and ($e$). The proof is thus complete.
\end{proof}
\subsection{The generalized metric projection and the  metric projection in the Banach space $c$}\label{KLR-S4.4}
We will now study the connections between the generalized metric projection $\Pi$ and the standard metric projection $P$ on the Banach space $c$ of convergent sequences, see Section~\ref{KLR-S1.1}.

For an arbitrary given $r>0$, let $\beta_r=(r,r,\ldots)$. \textcolor[rgb]{0.00,0.00,0.00}{Then $\beta_r\in \ell_{\infty}\cap (c\backslash c_0)$.}
\begin{lm}\label{KLR-S4.4-L1} In the Banach space $c$, for any $r>0$, let $\beta_r=(r,r,\ldots)\in \ell_{\infty}\in c\backslash c_0$. Then
$$J\beta_r=D(r)\subseteq \ell_1.$$
\end{lm}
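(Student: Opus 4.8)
The plan is to unwind the definition of the normalized duality map $J$ on the Banach space $c$, using the identification $c^{*}=\ell_{1}$ and the explicit pairing formula from Section~\ref{KLR-S1.1}, and to reduce the claim to the elementary fact that a real summable sequence which equals its own sum of absolute values must be nonnegative. First I would record that $\|\beta_{r}\|_{c}=\sup_{n}|r|=r$, so that, directly from the definition of $J$, an element $\phi\in c^{*}=\ell_{1}$ belongs to $J\beta_{r}$ if and only if $\|\phi\|_{\ell_{1}}=r$ and $\langle\phi,\beta_{r}\rangle=r^{2}$; indeed, once $\|\phi\|_{\ell_{1}}=\|\beta_{r}\|_{c}=r$, the remaining equalities $\|\phi\|^{2}=\|\beta_{r}\|^{2}=\|\phi\|\,\|\beta_{r}\|=r^{2}$ in the definition of $J$ hold automatically.

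Next I would compute the pairing. Writing $\phi=(\phi_{0},\phi_{1},\phi_{2},\ldots)\in\ell_{1}$ and using that $\beta_{r}$ is the constant sequence with value $r$, the pairing formula gives
$$\langle\phi,\beta_{r}\rangle=\phi_{0}\lim_{n\to\infty}r+\sum_{n=1}^{\infty}\phi_{n}r=r\sum_{n=0}^{\infty}\phi_{n},$$
while $\|\phi\|_{\ell_{1}}=\sum_{n=0}^{\infty}|\phi_{n}|$. For the inclusion $J\beta_{r}\subseteq D(r)$: if $\phi\in J\beta_{r}$, then $r\sum_{n\ge0}\phi_{n}=\langle\phi,\beta_{r}\rangle=r^{2}$, hence $\sum_{n\ge0}\phi_{n}=r=\sum_{n\ge0}|\phi_{n}|$, and equality in the triangle inequality forces $\phi_{n}\ge0$ for every $n$; thus $\phi$ is a nonnegative element of $\ell_{1}$ with $\|\phi\|_{\ell_{1}}=r$, i.e.\ $\phi\in D(r)$ (see \eqref{KLR-S1.1-E1}). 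For the reverse inclusion $D(r)\subseteq J\beta_{r}$: if $z=(z_{0},z_{1},\ldots)\in D(r)$, then $z_{n}\ge0$ for all $n$ and $\|z\|_{\ell_{1}}=\sum_{n\ge0}z_{n}=r=\|\beta_{r}\|_{c}$, so $\langle z,\beta_{r}\rangle=r\sum_{n\ge0}z_{n}=r^{2}=\|z\|_{\ell_{1}}^{2}=\|\beta_{r}\|_{c}^{2}=\|z\|_{\ell_{1}}\,\|\beta_{r}\|_{c}$, which is exactly the condition defining $z\in J\beta_{r}$. Combining the two inclusions yields $J\beta_{r}=D(r)$.

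I do not expect a genuine obstacle here; the only points that require care are the bookkeeping with the indexing convention of $c^{*}=\ell_{1}$ (the coordinate $\phi_{0}$ is paired with the limit of the sequence, not with any of its terms) and the elementary observation that $\sum_{n\ge0}\phi_{n}=\sum_{n\ge0}|\phi_{n}|$ forces each $\phi_{n}$ to be nonnegative.
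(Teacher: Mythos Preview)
Your proposal is correct and follows essentially the same approach as the paper: both directions are handled by computing the pairing $\langle\phi,\beta_r\rangle=r\sum_{n\ge0}\phi_n$, equating it to $r^2$, and combining this with $\|\phi\|_{\ell_1}=\sum_{n\ge0}|\phi_n|=r$ to force nonnegativity of the entries (and conversely). Your explicit remark about the indexing convention for $c^{*}=\ell_1$ and your clean reduction of the $J$-condition to the two equalities $\|\phi\|_{\ell_1}=r$ and $\langle\phi,\beta_r\rangle=r^2$ are in fact slightly more polished than the paper's presentation, but the argument is the same.
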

\begin{proof} For any $y=(y_0,y_1,y_2,\ldots)\in J\beta_r\subseteq \ell_1$, we must have
$$r^2=\|\beta_r\|^2=\langle y,\beta_r\rangle=\|y\|^2=r^2,$$
which implies that
$$r^2=\langle y,\beta_r\rangle=y_0\lim_{n\to \infty}r+\sum_{n=1}^{\infty}y_nr=ry_0+r\sum_{n=1}^{\infty}y_n=r^2,$$
or $\displaystyle \sum_{n=0}^{\infty}y_n=r$. From $y\in J\beta_r\subseteq \ell_1$, we have $\displaystyle r=\|\beta_r\|=\|y\|=\sum_{n=0}^{\infty}|y_n|$, which implies that
$0\leq y_n\leq r$, for $n=1,2,\ldots.$ Hence, $y=(y_0,y_1,y_2,\ldots)\in D(r)\subseteq \ell_1.$

For the converse, for any $y=(y_0,y_1,y_2,\ldots)\in D(r)\subseteq \ell_1,$ we evaluate,
$$\langle y,\beta_r\rangle=y_0\lim_{n\to \infty}r+\sum_{n=1}^{\infty}y_nr=ry_0+r\sum_{n=1}^{\infty}y_n=r\sum_{n=0}^{\infty}y_n=r^2=\|y\|^2=\|\beta_r\|^2.$$
Then, $y=(y_0,y_1,y_2,\ldots)\in J\beta_r\subseteq \ell_1$. Hence, for $\beta_r=(r,r,\ldots)\in c\backslash c_0$, we get $J\beta_r=D(r)\subseteq \ell_1.$
\end{proof}
\begin{lm}\label{KLR-S4.4-L2} Let $r>0$. If $g\in D(r)\subseteq \ell_1=c^*$ has infinitely many positive entries, then $\pi_{c_0}(g)=\emptyset.$
\end{lm}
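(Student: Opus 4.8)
Fix $r>0$ and suppose $g=(g_0,g_1,g_2,\ldots)\in D(r)\subseteq\ell_1=c^*$ has infinitely many strictly positive entries. I want to show $\pi_{c_0}(g)=\emptyset$, i.e. that $\inf_{t\in c_0}V(g,t)$ is not attained. Recall the pairing on $c$: for $y=(y_0,y_1,\ldots)\in\ell_1$ and $t=\{t_n\}\in c$ one has $\langle y,t\rangle=y_0\lim_n t_n+\sum_{n=1}^\infty y_n t_n$; so for $t\in c_0$ the first term vanishes and $\langle g,t\rangle=\sum_{n=1}^\infty g_n t_n$. Since $g\in D(r)$ has nonnegative entries with $\sum_{n=0}^\infty g_n=r=\|g\|$, the coordinate $g_0$ plays no role in the pairing against $c_0$, which is exactly the source of the obstruction.

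**Computing the infimum.** The plan is first to identify $m:=\inf_{t\in c_0}V(g,t)$. For any $t\in c_0$ with $\|t\|\le r$ we have, using $\sum_{n\ge 1}g_n\le r$ and $|t_n|\le r$,
$$V(g,t)=\|g\|^2-2\langle g,t\rangle+\|t\|^2=r^2-2\sum_{n=1}^\infty g_n t_n+\|t\|^2\ge r^2-2r\sum_{n=1}^\infty g_n+0\ge r^2-2r\cdot r=-r^2,$$
and a symmetric bound handles $\|t\|$ large via part (c) of Proposition~\ref{KLR-S2.1-P1}. More carefully, I would show $m=(\,r-\sum_{n\ge1}g_n\,)^2-(\sum_{n\ge1}g_n)^2+\text{(correction)}$ — but the clean route is to mimic Example~\ref{KLR-S2.2-Ex3}: build an explicit minimizing sequence. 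Let $\sigma:=\sum_{n=1}^\infty g_n=r-g_0\le r$, and note $\sigma>0$ because $g$ has infinitely many positive entries (in particular at least one with index $\ge1$). For each $k$ define $w^{(k)}\in c_0$ by $w^{(k)}_n=r$ for $n\le k$ and $w^{(k)}_n=0$ for $n>k$; then $\|w^{(k)}\|=r$, $\langle g,w^{(k)}\rangle=r\sum_{n=1}^k g_n\to r\sigma$, so $V(g,w^{(k)})\to r^2-2r\sigma+r^2=2r^2-2r\sigma=2r(r-\sigma)=2rg_0$. Hence $m\le 2rg_0$. A matching lower bound $V(g,t)\ge 2rg_0$ for every $t\in c_0$ should follow from the chain $V(g,t)=r^2-2\sum g_n t_n+\|t\|^2\ge r^2-2r\sum_{n\ge1}g_n+0=r^2-2r\sigma+r^2$ once one also checks the case $\|t\|>r$ does no better (if $\|t\|=:\rho$, then $V(g,t)\ge r^2-2\rho\sigma+\rho^2$, minimized over $\rho\ge0$ at $\rho=\sigma\le r$ giving value $r^2-\sigma^2\ge r^2-2r\sigma+r^2$ precisely when... — I will just split into $\rho\le r$ and $\rho\ge r$ and verify the bound $2rg_0$ holds in both, which is elementary). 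So $m=2rg_0$.

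**Non-attainment.** Now suppose for contradiction that some $t=(t_n)\in c_0$ achieves $V(g,t)=2rg_0$. Tracing the inequalities above to equality forces, first, $\|t\|=\sigma=r-g_0$ if $g_0>0$, or more robustly: equality in $\sum_{n\ge1}g_n t_n\le r\sum_{n\ge1}g_n$ — wait, the sharp inequality I used was $\sum g_n t_n\le \|t\|\sum g_n$ together with $\|t\|\le r$. Equality in the whole chain requires $\sum_{n\ge1}g_n t_n=r\sigma$, which (since $g_n\ge0$, $|t_n|\le\|t\|\le r$) forces $t_n=r$ for every $n$ with $g_n>0$. But $g$ has \emph{infinitely many} positive entries, so $t_n=r$ for infinitely many $n$, contradicting $t\in c_0$ (which demands $t_n\to0$), provided $r>0$. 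This is the crux. I should double-check the edge case $g_0=0$: then $m=0$, $\sigma=r$, and the argument still says $t_n=r$ infinitely often, contradiction; so non-attainment holds regardless of whether $g_0$ is zero or positive.

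**Main obstacle.** The only genuinely delicate point is pinning down $m=2rg_0$ and making the equality analysis airtight — in particular handling the two regimes $\|t\|\le r$ and $\|t\|\ge r$ and confirming that in neither can the value $2rg_0$ be beaten, so that any minimizer must lie in the equality case that produces the contradiction. Everything else (the explicit sequence $w^{(k)}$, the pairing formula, the use of $g\in D(r)$) is routine. I would present it in the order: (1) reduce the pairing to $\sum_{n\ge1}g_nt_n$; (2) exhibit $w^{(k)}$ and get $m\le 2rg_0$; (3) prove $V(g,t)\ge 2rg_0$ for all $t\in c_0$, so $m=2rg_0$; (4) assume attainment, extract the equality conditions, derive $t_n=r$ for infinitely many $n$, contradict $t\in c_0$.
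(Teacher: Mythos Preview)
Your overall architecture matches the paper's: bound $V(g,\cdot)$ strictly from below on $c_0$, exhibit a sequence in $c_0$ driving $V$ down to that bound, and conclude the infimum is not attained. The paper executes the strict lower bound directly (for each $s\ne\theta$ it chooses $m$ with $|s_n|<\|s\|$ for $n>m$ and uses that infinitely many $g_n$ with $n>m$ are positive to force a strict inequality in the chain), so it never needs to name the value of the infimum explicitly.

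There is, however, a genuine gap in your computation of $m$. The claim $m=2rg_0$ is false when $g_0>0$: take $v^{(k)}\in c_0$ with $v^{(k)}_n=\sigma$ for $n\le k$ and $0$ otherwise (height $\sigma$, not $r$); then $\|v^{(k)}\|=\sigma$ and $\langle g,v^{(k)}\rangle\to\sigma^2$, so $V(g,v^{(k)})\to r^2-\sigma^2=2rg_0-g_0^2<2rg_0$. Your displayed chain ``$V(g,t)\ge r^2-2r\sigma+0=r^2-2r\sigma+r^2$'' is internally inconsistent, and the hand-wave ``verify the bound $2rg_0$ holds in both regimes, which is elementary'' cannot succeed for exactly this reason---you even noticed the trouble when you wrote ``$r^2-\sigma^2\ge r^2-2r\sigma+r^2$ precisely when\ldots'' and trailed off (that inequality holds only if $g_0=0$). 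The repair is immediate: the correct uniform lower bound is $V(g,t)>r^2-2\|t\|\sigma+\|t\|^2\ge r^2-\sigma^2$ for every $t\in c_0\setminus\{\theta\}$ (the strict inequality coming from $\sum_{n\ge1}g_nt_n<\|t\|\,\sigma$, since $t_n\to0$ while infinitely many $g_n>0$), and together with the sequence $v^{(k)}$ this gives $m=r^2-\sigma^2$. Your non-attainment argument then goes through verbatim with $\sigma$ in place of $r$: any minimizer would need $\|t\|=\sigma$ and $t_n=\sigma$ for every $n\ge1$ with $g_n>0$, contradicting $t\in c_0$. (For what it is worth, the paper's own write-up tacitly assumes $g_0=0$ at the step where it sets $\sum_{n\ge1}u_n=r$, so your attention to the general case is a good instinct; it just needs the correct value of $m$.)
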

\begin{proof} For any given $g\in D(r)\subseteq \ell_1=c^*$, let $g=(u_0,u_1,u_2,\ldots)$. Assume that $g$ has infinitely many positive entries. For any $s=(s_1,s_2,\ldots)\in c_0,$ if $s=\theta$, then $V(g,\theta)=r^2>0$. Next, we suppose $\|s\|>0$. Since $s\in c_0,$ there is $m>1$  such that $-\|s\|<s_n<\|s_n\|$, for all $n\geq m$. Then
\begin{align*}
V(g,s)&=\|g\|^2-2\langle g,s\rangle+\|s\|^2=r^2-2\left(u_0\lim_{n\to \infty}s_n+\sum_{n=1}^{\infty}u_ns_n\right)+\|s\|^2=r^2-2\sum_{n=1}^{\infty}u_ns_n+\|s\|^2\\
&=r^2-2\sum_{n=1}^m u_ns_n-2\sum_{n=m+1}^{\infty}u_ns_n+\|s\|^2\geq r^2-2\|s\|\sum_{n=1}^mu_n-2\sum_{m+1}^{\infty}u_n s_n+\|s\|^2\\
&> r^2-2\|s\|\sum_{n=1}^mu_n-2\|s_n\|\sum_{m+1}^{\infty}u_n+\|s\|^2=r^2-2\|s\|\sum_{n=1}^{\infty}u_n+\|s\|^2\\
&=r^2-2r\|s\|+\|s\|^2=(r-\|s\|)^2\geq 0,
\end{align*}
where the strict inequality is due to the hypothesis that $\displaystyle \sum_{n=m+1}^{\infty}u_n>0$ and $\|s\|>0.$ Hence,
\begin{equation}\label{-E1}
V(g,s)>0,\quad \text{for any}\ s\in c_0.
\end{equation}
For this fixed $g\in D(r)\subseteq \ell_1=c^*$ with infinitely many positive entries, for any positive integer $n$, define a point $e_n\in c_0$ as follows:
\begin{equation}\label{-E2}
\text{The }m\text{-th entry of}\ e_n=\left\{\begin{array}{lllll}r,&\text{if} & m\leq n&\text{and}&u_m>0,\\ 0&&\text{otherwise.}&&\end{array}\right.
\end{equation}
Then, as $n$ is large enough such that $\|e_n\|^2=r^2$ (in the $c$-norm), we have
\begin{equation}\label{-E3}
V(g,e_n)=\|g\|^2-2\langle g,e_n\rangle+\|e_n\|^2=r^2-2\left( u_00+r\sum_{m=1}^nu_n\right)+r^2=r^2-2r\sum_{m=1}^nu_n+r^2\to 0\ \text{as}\ n\to \infty.
\end{equation}
By combining \eqref{-E1} and \eqref{-E3}, we get $\pi_{c_0}(g)=\emptyset.$
\end{proof}
\begin{pr}\label{KLR-S4.4-P1} For any $r>0$, let $\beta_r=(r,r,\ldots)\in \ell_{\infty}\in c\backslash c_0$. We define two subsets of $c_0$ by:
\begin{align*}
S(\beta_r)&=\left\{ s\in c_0|\ \|\beta_r-s\|\leq r\right\}=\left\{ s\in c_0|\ \|\beta_r-s\|= r\right\}\\
Z(r)&=\{s=(s_1,s_2,\ldots)\in c_0|\ \{n\in \mathds{N}| s_n=r\}\ne \emptyset\}.
\end{align*}
Then:
\begin{description}
\item[($a$)] $P_{c_0}(\beta_r)=S(\beta_r).$
\item[($b$)] $\Pi_{c_0}(\beta_r)=Z(r).$
\end{description}
\end{pr}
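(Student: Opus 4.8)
The plan is to handle the two parts separately, since they rely on different machinery: part ($a$) is a direct metric-projection computation in the Banach space $c$, while part ($b$) uses the definition $\Pi_{c_0}(\beta_r)=\cup_{g\in J\beta_r}\pi_{c_0}(g)$ together with Lemmas~\ref{KLR-S4.4-L1} and \ref{KLR-S4.4-L2}. For part ($a$), I would first verify that the two displayed descriptions of $S(\beta_r)$ coincide, i.e.\ that $\|\beta_r-s\|\le r$ with $s\in c_0$ forces $\|\beta_r-s\|=r$: writing $s=(s_1,s_2,\ldots)$ with $s_n\to 0$, we have $\|\beta_r-s\|=\sup_n|r-s_n|\ge \lim_n|r-s_n|=r$, so the inequality is automatically an equality. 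Then, since $\operatorname{dist}(\beta_r,c_0)\le\|\beta_r-\theta\|=r$ and, by the same estimate, $\|\beta_r-s\|\ge r$ for \emph{every} $s\in c_0$, the distance is exactly $r$, and the set of minimizers is precisely $\{s\in c_0:\|\beta_r-s\|=r\}=S(\beta_r)$. This gives ($a$).

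For part ($b$), I would start from Definition~\ref{KLR-S3.1-D1}, which gives $\Pi_{c_0}(\beta_r)=\cup_{g\in J\beta_r}\pi_{c_0}(g)$. By Lemma~\ref{KLR-S4.4-L1}, $J\beta_r=D(r)\subseteq\ell_1$, so we must compute $\cup_{g\in D(r)}\pi_{c_0}(g)$. Split $D(r)$ according to whether $g$ has finitely or infinitely many positive entries. If $g\in D(r)$ has infinitely many positive entries, then by Lemma~\ref{KLR-S4.4-L2} $\pi_{c_0}(g)=\emptyset$, so such $g$ contribute nothing. Hence it suffices to take the union over $g\in D(r)$ with only finitely many positive entries, say with support $\{n_1,\ldots,n_k\}$ and $\sum_j g_{n_j}=r$. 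For such $g$, the key claim is $\pi_{c_0}(g)=\{s\in c_0:\ s_{n_j}=r\text{ for }j=1,\ldots,k,\ \|s\|=r\}$ — or, more carefully, that the minimizing set consists of those $s\in c_0$ attaining the infimum of $V(g,s)$. I would show $\inf_{s\in c_0}V(g,s)=0$ by exhibiting finitely-supported $s$ with $s_{n_j}=r$ and all other entries $0$ (so $\langle g,s\rangle = r\sum_j g_{n_j} = r^2$ and $\|s\|=r$, giving $V(g,s)=r^2-2r^2+r^2=0$); and conversely that $V(g,s)=0$ forces, via $(\|g\|-\|s\|)^2\le V(g,s)$, that $\|s\|=r$ and then $\langle g,s\rangle=r^2=\sum_j g_{n_j}s_{n_j}$, which with $|s_{n_j}|\le\|s\|=r$ and $g_{n_j}>0$ forces $s_{n_j}=r$ for each $j$. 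Therefore $s\in\pi_{c_0}(g)$ iff $s\in c_0$, $\|s\|=r$, and $s_{n_j}=r$ on the (finite) support of $g$. Taking the union over all finitely-supported $g\in D(r)$ — equivalently, over all nonempty finite subsets $\{n_1,\ldots,n_k\}\subseteq\mathbb{N}$, since for any such set the point $g=(r/k)(e_{n_1}+\cdots+e_{n_k})$ lies in $D(r)$ — yields exactly $\{s\in c_0:\ \|s\|=r\text{ and }s_n=r\text{ for some }n\}=Z(r)$, where the norm constraint $\|s\|=r$ is automatic once $s_n=r$ for some $n$ and $s\in c_0$ (indeed $\|s\|\ge|s_n|=r$, and to be a minimizer we cannot afford $\|s\|>r$). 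This establishes ($b$).

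The main obstacle I anticipate is the bookkeeping in part ($b$): one must be careful that $\pi_{c_0}(g)$ for a single finitely-supported $g$ is not all of $Z(r)$ but only the slice where $s$ agrees with the full support of $g$, and that it is the \emph{union} over all such $g$ — hence over all finite index sets, including singletons — that sweeps out $Z(r)$. A secondary subtlety is confirming that the infimum $\inf_{s\in c_0}V(g,s)$ is actually \emph{attained} (so that $\pi_{c_0}(g)\ne\emptyset$ for finitely-supported $g$), which is why exhibiting an explicit finitely-supported minimizer is essential rather than merely an approximating sequence. Everything else — the continuity and lower-bound properties of $V$ used here — is already available from Proposition~\ref{KLR-S2.1-P1}.
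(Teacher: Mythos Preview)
Your approach matches the paper's on both parts: for (a) you bound $\|\beta_r-s\|\ge\lim_n|r-s_n|=r$ with witness $\theta$; for (b) you write $\Pi_{c_0}(\beta_r)=\bigcup_{g\in D(r)}\pi_{c_0}(g)$ via Lemma~\ref{KLR-S4.4-L1}, discard infinitely-supported $g$ by Lemma~\ref{KLR-S4.4-L2}, and compute $\pi_{c_0}(g)$ for the rest. The paper does the same, phrased as two inclusions rather than a slice-by-slice description.

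Two genuine gaps remain in (b), and they affect the paper's own proof as well. First, your final step ``$\{s\in c_0:\|s\|=r,\ s_n=r\text{ for some }n\}=Z(r)$'' is false: $Z(r)$ carries no norm constraint, so for instance $(r,2r,0,\ldots)\in Z(r)$, and your justification (``to be a minimizer we cannot afford $\|s\|>r$'') is circular---it presupposes that $s$ is a minimizer, which is precisely what is at stake. In fact any $s$ with $\|s\|>r$ satisfies $V(g,s)\ge(\|g\|-\|s\|)^2>0$ for every $g\in D(r)$, so $Z(r)\not\subseteq\Pi_{c_0}(\beta_r)$. Second, since here $\ell_1=c^*$ carries a zeroth coordinate acting as $u_0\lim_n t_n$, Lemma~\ref{KLR-S4.4-L1} allows $g=(u_0,u_1,\ldots)\in J\beta_r$ with $u_0>0$, and your calculation ``$\langle g,s\rangle=r\sum_j g_{n_j}=r^2$'' silently assumes $u_0=0$. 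When $u_0>0$ the pairing on $c_0$ sees only $\sum_{n\ge1}u_ns_n$, and one computes $\inf_{s\in c_0}V(g,s)=u_0(2r-u_0)>0$, attained at $s$ with $\|s\|=r-u_0<r$; the extreme case $g=(r,0,0,\ldots)$ gives $\pi_{c_0}(g)=\{\theta\}\not\subset Z(r)$. Hence $\Pi_{c_0}(\beta_r)\not\subseteq Z(r)$ either, and the stated identity in (b) is in fact false as written; your argument (and the paper's) is valid only over the subfamily of $g$ with $u_0=0$.
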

\begin{proof} ($a$) We first consider $P_{c_0}(\beta_r)$. For any $t=(t_1,t_2,\ldots)\in c_0$, by $\displaystyle \lim_{n\to \infty}t_n=0$, it follows that:
\begin{equation}\label{KLR-S4.4-P1-E1}
\|\beta_r-t\|=\sup_{1\leq n<\infty}|r-t_n|\geq r,\quad \text{and}\quad \lim_{n\to \infty}|r-t_n|=r,\quad \text{for any}\ t\in c_0.
\end{equation}
By \eqref{KLR-S4.4-P1-E1}, we get that $\theta \in P_{c_0}(\beta_r).$ On the other hand, for any $s=(s_1,s_2,\ldots)\in S(\beta_r)$, we have $\|\beta_r-s\|\leq r.$ From \eqref{KLR-S4.4-P1-E1}, it follows that
\begin{equation}\label{KLR-S4.4-P1-E2}
\|\beta_r-s\|=r,\quad \text{for any}\ s\in S(\beta_r),
\end{equation}
which implies that $s\in P_{c_0}(\beta_r)$, for any $s\in S(\beta_r)$. Hence, $S(\beta_r)\subseteq P_{c_0}(\beta_r).$

For the converse, for any $u=(u_1,u_2,\ldots)\in P_{c_0}(\beta_r)$, by \eqref{KLR-S4.4-P1-E1} and \eqref{KLR-S4.4-P1-E2}, we must have $\|\beta_r-u\|=r.$ It follows that $u\in S(\beta_r)$. Thus, $S(\beta_r)\supseteq P_{c_0}(\beta_r),$ as required.

For ($b$), we consider $\Pi_{c_0}(\beta_r)$. Let
$$D_0(r)=\{z=(z_0,z_1,z_2,\ldots)\in D(r)|\ z\ \text{has finite number of positive entries}\}.$$
We see that $D_0(r)\subset D(r)\backslash D^+(r)$. For any $j=(y_0,y_1,y_2,\ldots)\in J\beta_r\subseteq \ell_1$, from Lemma~\ref{KLR-S4.4-L1}, we have $j\in D(r)\subseteq \ell_1.$ Then, for any $t=(t_1,t_2,\ldots)\in c_0$, by $\displaystyle \lim_{n\to \infty}t_n=0$, it follows that
$$\langle j,t\rangle=y_0\lim_{n\to \infty} t_n+\sum_{n=1}^{\infty}t_ny_n=\sum_{n=1}^{\infty}t_ny_n,$$
implying that
$$(r-\|t\|)^2=(\|j\|-\|t\|)^2\leq V(j,t)=\|j\|^2-2\langle j,t\rangle+\|t\|^2=\|j\|^2-2\sum_{n=1}^{\infty}t_ny_n+\|t\|^2. $$
It follows that $V(j,t)=0$ implies that $\|t\|=r$ and $\|t\|\ne r$ implies that $V(j,t)>0$, for $t=(t_1,t_2,\ldots)\in c_0.$ Now, for any $s=(s_1,s_2,\ldots)\in Z(r)$, there is at least one point $z=(0,z_1,z_2,\ldots)\in D_0(r)$ such that
$$z_n=\left\{\begin{array}{lll}>0 &\text{if} & s_n=r,\\ =0,&\text{if}& s_n\ne r,\end{array}\right.$$
and $\displaystyle \|z\|=\sum_{s_n=r}z_n=r$. Then
$$V(z,s)=\|z\|^2-2\langle z,s\rangle+\|s\|^2=r^2-2\sum_{s_n=r}rz_n+r^2 =0,$$
confirming that $s\in \pi_{c_0}(z)$. By Lemma~\ref{KLR-S4.4-L1}, $z=(0,z_1,z_2,\ldots)\in D_0(r)\subseteq D(r)=J\beta_r\subseteq \ell_1$. We obtain that $s\in \Pi_{c_0}(\beta_r).$ Hence,
\begin{equation}\label{KLR-S4.4-P1-E3}
Z(r)\subseteq \Pi_{c_0}(\beta_r).
\end{equation}
On the other hand, for any $s=(s_1,s_2,\ldots)\in \Pi_{c_0}(\beta_r)$, there is $g=(u_0,u_1,u_2,\ldots)\in D(r)=J\beta_r$ such that $s\in \pi_{c_0}(g)$, similar to (3.8) in the proof of Lemma~\ref{KLR-S4.4-L1}, we get  $V(g,s)\leq \inf\{V(g,t)|\ t\in Z(r)\}=0$. Since $Z(r)\subseteq c_0$, it follows that $V(g,s)=0.$ By $V(g,s)\geq (\|g\|-\|s\|)^2$, we get
\begin{equation}\label{KLR-S4.4-P1-E4}
\|g\|=\|s\|=r.
\end{equation}
By the assumption that $s\in \pi_{c_0}(g)$, from Lemma~\ref{KLR-S4.4-L1}, we deduce $g\in D_0(r)$ because $g\in D(r)\notin D_0(r)$, if and only if, $g$ has infinitely many positive entries. Then there is a positive integer $k$ such that $\displaystyle r=\|g\|=\sum_{n=0}^ku_n$ and $u_n=0$, for all $n>k$. Thus,
$$0=V(g,s)=r^2-2\left(u_0\lim_{n\to \infty}s_n+\sum_{n=1}^{\infty}s_nu_n\right)+r^2=r^2-2\sum_{n=1}^ks_nu_n+r^2,$$
implying that
\begin{equation}\label{KLR-S4.4-P1-E5}
\sum_{n=1}^ks_nu_n=r^2.
\end{equation}
By \eqref{KLR-S4.4-P1-E4} and $g\in D_0(r)$,we have $0\leq u_n\leq r$, $-r\leq s_n\leq r$ and $\displaystyle \sum_{n=0}^ku_n=r$. From \eqref{KLR-S4.4-P1-E5}, we obtain $u_n>0$ giving $s_n=r$. By $\displaystyle \sum_{n=0}^ku_n=r$, we get $\{n\in \mathds{N}| \ s_n=r\ne \emptyset \}$. That is, $s\in Z(r)$. Hence
\begin{equation}\label{KLR-S4.4-P1-E6}
\Pi_{c_0}(\beta_r)\subseteq Z(r).
\end{equation}
Part ($b$) follows from \eqref{KLR-S4.4-P1-E3} and \eqref{KLR-S4.4-P1-E6}. The proof is complete.
\end{proof}

Due to \eqref{KLR-S4.4-P1-E3} we make the following observation: We examine $S(\beta_r)$ and $Z(r)$. For any $s=(s_1,s_2,\ldots)\in c_0$, $s\in S(\beta_r)$ is equivalent to $0\leq s_n\leq 2r,$ for $n=1,2,\ldots$. This implies that $S(\beta_r)\not\subseteq Z(r)$, $Z(r)\not\subseteq S(\beta_r) $ and $S(\beta_r)\cap Z(r)\ne \emptyset$. Therefore, from Proposition~\ref{KLR-S4.4-P1}, we obtain the following connections between the projections:
$$P_{c_0}(\beta_r)\nsubseteq \Pi_{c_0}(\beta_r),\quad \Pi_{c_0}(\beta_r)\nsubseteq P_{c_0}(\beta_r),\quad P_{c_0}(\beta_r)\cap \Pi_{c_0}(\beta_r)\ne \emptyset.$$
\section{Generalized Projections in $C[0, 1]$}\label{KLR-S5}
Let $C[0, 1]$ be the Banach space of all bounded continuous real-valued maps on $[0, 1]$ with norm
$$\|f\|=\max_{0\leq t\leq 1}|f(t)|,\ \ \text{for any}\ f\in C[0,1].$$
For any given positive integer $n$, let $\mathcal{P}_n$ be the subspace of $C[0, 1]$ that consists of all real coefficients polynomials of degree less than or equal to $n$.

For the metric projection $P_{\mathcal{P}_n}:C[0,1]\to \mathcal{P}_n$, in 1859, P. L. Chebyshev proved the following celebrated Chebyshev theorem that provides the criterion formulating the necessary and sufficient conditions for the metric projection from $C[0, 1]$ to $\mathcal{P}_n$.
\begin{thr}\label{KLR-S5-S1} If a function $f(t)$ is continuous on $[0,1]$ and if for $\displaystyle \sum_{k=0}^n a_kt^k\in \mathcal{P}_n$, we have
$$A=\max_{0\leq t\leq 1}\left|f(t)-\sum_{k=0}^n a_kt^k\right|,$$
then $\displaystyle \sum_{k=0}^na_kt^k$ is the polynomial of best uniform approximation for $f(t)$, that is,
$$\max_{0\leq t\leq 1}\left|f(t)-\sum_{k=0}^na_kt^k\right|=\min\left\{\max_{0\leq t\leq 1}\left|f(t)-\sum_{k=0}^nc_kt^k\right|:\ \sum_{k=0}^nc_kt^k\in \mathcal{P}_n\right\},$$
if and only if, there are $n+2$ points $0\leq t_0<t_1<\cdots t_{n+1}\leq 1$ such that
$$f(t_i)-\sum_{k=0}^n a_nt_i^k=\epsilon A(-1)^i,\ i=0,1,2,\ldots,n+1,\quad \epsilon\in \{-1,1\}.$$
\end{thr}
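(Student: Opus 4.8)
The plan is to establish the two implications of the equivalence separately, using the elementary fact that a nonzero element of $\mathcal{P}_n$ has at most $n$ zeros in $[0,1]$.

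\emph{Sufficiency of the alternation condition.} Write $p(t)=\sum_{k=0}^n a_kt^k$ and suppose there are points $0\le t_0<t_1<\cdots<t_{n+1}\le 1$ with $f(t_i)-p(t_i)=\epsilon A(-1)^i$. If the degenerate case $A=0$ occurs then $f=p\in\mathcal{P}_n$ and there is nothing to prove, so assume $A>0$ and argue by contradiction: suppose some $q=\sum_{k=0}^n c_kt^k\in\mathcal{P}_n$ satisfies $\|f-q\|<A$. Then at each $t_i$ the quantity $p(t_i)-q(t_i)=\bigl(f(t_i)-q(t_i)\bigr)-\bigl(f(t_i)-p(t_i)\bigr)$ has the same sign as $-\bigl(f(t_i)-p(t_i)\bigr)=-\epsilon A(-1)^i$, because $|f(t_i)-q(t_i)|<A=|f(t_i)-p(t_i)|$. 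Hence $p-q$ changes sign on each of the $n+1$ intervals $[t_i,t_{i+1}]$ and so has at least $n+1$ zeros in $[0,1]$; being of degree at most $n$ it vanishes identically, forcing $\|f-q\|=\|f-p\|=A$, a contradiction. Therefore $p$ is a polynomial of best uniform approximation.

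\emph{Necessity of the alternation condition.} Assume $p$ is a best approximation and let $\ell$ be the largest integer for which there exist $0\le s_0<\cdots<s_{\ell-1}\le1$ with $f(s_i)-p(s_i)=\eta(-1)^iA$ for a fixed sign $\eta$; the goal is to show $\ell\ge n+2$. Suppose instead $\ell\le n+1$. The sets $E^{\pm}=\{t\in[0,1]:f(t)-p(t)=\pm A\}$ are disjoint and compact, hence separated by a positive distance, so $[0,1]$ splits at finitely many points into consecutive closed subintervals, no one of which meets both $E^+$ and $E^-$. Collecting these subintervals into maximal runs that meet $E^+\cup E^-$, one picks one extremal point from each run and observes that these points alternate in sign; maximality of $\ell$ then gives that the number $m$ of such runs satisfies $m\le\ell$. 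Choose nodes $z_1<\cdots<z_{m-1}$ in $(0,1)$, one strictly between each pair of consecutive runs. Since $m-1\le\ell-1\le n$, the polynomial $r(t)=\pm\prod_{i=1}^{m-1}(z_i-t)$ lies in $\mathcal{P}_n$, and the overall sign can be fixed so that $r$ has the same sign as $f-p$ at every point of $E^+\cup E^-$, whence $r(t)\bigl(f(t)-p(t)\bigr)>0$ on the compact set $E^+\cup E^-$.

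\emph{Conclusion by perturbation.} Since $r\cdot(f-p)>0$ on $E^+\cup E^-$ while $|f-p|\le A-\varepsilon_0$ off a suitable open neighborhood of that set for some $\varepsilon_0>0$, a routine uniform-continuity estimate yields $\delta_0>0$ such that $\|f-p-\delta r\|<A$ for all $0<\delta<\delta_0$. Then $p+\delta r\in\mathcal{P}_n$ is a strictly better approximation to $f$, contradicting the optimality of $p$; hence $\ell\ge n+2$, which is exactly the claimed alternation at $n+2$ points. I expect the only genuinely delicate step to be the middle one: making precise the partition into maximal sign‑constant ``blocks'' of the extremal set, verifying that there are at most $\ell$ of them (this is where the definition of $\ell$ is used decisively), and selecting the separating nodes $z_i$ together with the uniform lower bound $\varepsilon_0$ needed to choose $\delta$; the sign‑counting and the contradiction arguments are then immediate.
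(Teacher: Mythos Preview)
The paper does not prove this theorem at all: it is introduced as ``the celebrated Chebyshev theorem'' dating to 1859 and is simply stated as background for Section~\ref{KLR-S5}, with no argument given. So there is nothing in the paper to compare your proof against.

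That said, your proposal is the standard textbook proof of the equioscillation theorem and is correct in outline. The sufficiency half is clean and complete as written. For necessity, the only places to be careful are exactly the ones you flagged: in defining the ``maximal runs'' you should make explicit that two adjacent runs carry opposite extremal signs (otherwise they would merge), which is what forces $m\le\ell$; and in the perturbation step you need to split $[0,1]$ into a neighborhood $U$ of $E^+\cup E^-$ on which $r\cdot(f-p)\ge c>0$ and its complement on which $|f-p|\le A-\varepsilon_0$, then bound $|f-p-\delta r|$ separately on each piece. Both are routine once stated carefully.
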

\subsection{The normalized duality map on $C[0,1]$}\label{KLR-S5.1}
We will now study the generalized metric projection $\Pi_{\mathcal{P}_n}:C[0,1]\to \mathcal{P}_n$. We recall that the dual space $C^*[0,1]$ of $C[0,1]$
 is $\text{rca}[0,1]$, in which the considered $\sigma$-field is the standard $\sigma$-field $\sum$ on $[0,1]$, including all closed and open sets of $[0,1]$. By the Riesz representation theorem, for any $\phi\in C^*[0,1]$, there is a real-valued and countable additive function $\mu\in rca[0,1]$ define on the given $\sigma$-field $\sum$ on $[0,1]$ such that
\begin{equation}\label{KLR-S5.1-E1}
\langle \phi,f\rangle =\int_0^1f(t)\mu (dt),\quad \text{for any}\ f\in C[0,1].
\end{equation}
The norm of $\phi$ in $C^*[0,1]$ is
$$\|\phi\|=\|\mu\|=\nu(\mu,[0,1]),$$
where $\nu(\mu,[0,1])$ is the total variation of $\mu$ on $[0,1]$. Throughout this subsection, without any special mention, we shall identify $\phi$ and $\mu$ via \eqref{KLR-S5.1-E1}.
We say that $\mu\in C^*[0,1]$ and \eqref{KLR-S5.1-E1} is rewritten as
\begin{equation}\label{KLR-S5.1-E2}
\langle \mu,f\rangle =\int_0^1f(t)\mu (dt),\quad \text{for any}\ f\in C[0,1].
\end{equation}
For any $f\in C[0,1]$, we define
$$M(f)=\{t\in [0,1]:\ |f(t)|=\|f\|\}.$$
The nonempty, closed subset $M(f)$ of $[0,1]$ is called the maximizing set of $f$.
\begin{lm}\label{KLR-S5.1-L1} For any $f\in C[0,1]$ with $\|f\|>0,$ the following implication holds:
$$\mu\in Jf\quad \Rightarrow\quad \nu(\mu,[0,1]\backslash M(f))=0.$$
\end{lm}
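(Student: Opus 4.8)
The plan is to unpack the definition of the normalized duality map and then exploit the equality case in the elementary estimate $\left|\int_0^1 f\,d\mu\right|\le \int_0^1 |f|\,d|\mu|$, where $|\mu|$ denotes the total variation (Jordan) measure of $\mu$, so that $\nu(\mu,E)=|\mu|(E)$ for every measurable $E$.

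First I would record what the hypothesis says: $\mu\in Jf$ means exactly that $\langle\mu,f\rangle=\|\mu\|^2=\|f\|^2=\|\mu\|\,\|f\|$; in particular $\|\mu\|=\|f\|$ and $\int_0^1 f(t)\,\mu(dt)=\|f\|^2$. Since $f$ is continuous, the maximizing set $M(f)$ is closed, hence lies in the $\sigma$-field $\sum$, so $\nu(\mu,[0,1]\backslash M(f))=|\mu|([0,1]\backslash M(f))$ is well defined.

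Next I would run the chain of inequalities
\begin{align*}
\|f\|^2=\int_0^1 f\,d\mu &\le \left|\int_0^1 f\,d\mu\right| \le \int_0^1 |f|\,d|\mu|\\
&\le \|f\|\,|\mu|([0,1]) = \|f\|\,\nu(\mu,[0,1]) = \|f\|\,\|\mu\| = \|f\|^2,
\end{align*}
so that every inequality is in fact an equality. In particular $\int_0^1\bigl(\|f\|-|f|\bigr)\,d|\mu|=0$. The integrand $\|f\|-|f(\cdot)|$ is continuous and nonnegative, and it vanishes precisely on $M(f)$, i.e. $\{t:\|f\|-|f(t)|>0\}=[0,1]\backslash M(f)$. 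A nonnegative measurable function whose integral against $|\mu|$ is zero must vanish $|\mu|$-a.e., whence $|\mu|\bigl([0,1]\backslash M(f)\bigr)=0$, that is, $\nu(\mu,[0,1]\backslash M(f))=0$, as claimed.

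There is no genuine obstacle here; the only points requiring care are the standard facts, already recalled in the text via the Riesz representation of $C^*[0,1]$, that $\|\mu\|=\nu(\mu,[0,1])=|\mu|([0,1])$ and that $\left|\int_0^1 f\,d\mu\right|\le\int_0^1|f|\,d|\mu|$. The argument is essentially the equality statement of a Hölder-type inequality, and the hypothesis $\|f\|>0$ is used only to make the equality chain meaningful (when $\|f\|=0$ the conclusion is vacuous since $M(f)=[0,1]$).
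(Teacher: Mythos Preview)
Your proof is correct and follows essentially the same route as the paper's: both hinge on the inequality $\int_0^1 f\,d\mu\le\|f\|\,\nu(\mu,[0,1])$ together with $\|\mu\|=\|f\|$, and then read off that the mass of $|\mu|$ on the set where $|f|<\|f\|$ must vanish. The only cosmetic difference is that the paper argues by contradiction (splitting the integral over $M(f)$ and its complement and obtaining a strict inequality when $\nu(\mu,[0,1]\backslash M(f))>0$), whereas you phrase the same computation as an equality chain and invoke the vanishing of $\int(\|f\|-|f|)\,d|\mu|$ directly.
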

\begin{proof} Assume that $\mu\in Jf$ but $\nu(\mu,[0,1]\backslash M(f))>0.$ Then
$$\|f\|^2=\langle \mu,f\rangle =\|\mu\|^2=\nu(\mu,[0,1])^2,$$
implying that
\begin{align*}
\|f\|^2&=\langle \mu,f\rangle=\int_0^1f(t)\mu(dt)=\int_{M(f)}f(t)\mu(dt)+\int_{[0,1]\backslash M(f)} f(t)\mu(dt)\\
&\leq \|f\|\nu(\mu,M(f))+\int_{[0,1]\backslash M(f)} f(t)\mu(dt)\\
&<\|f\|\nu(\mu,M(f))+\|f\|\nu(\mu,[0,1]\backslash M(f))=\|f\|\nu(\mu,[0,1])=\|f\|^2,
\end{align*}
which is a contradiction. The proof is complete.
\end{proof}
\begin{lm}\label{KLR-S5.1-L2} Let $f\in C[0,1]$ be such that $\|f\|>0$. Assume either ($a$) or ($b$) below hold:
\begin{description}
\item[($a$)] For a positive integer $m$, let $\{t_1,t_2,\ldots,t_m\}\subset M(f)$ be distinct, and let $\alpha_1,\alpha_2,\ldots,\alpha_m$ be positive reals such that $\sum_{i=1}^m\alpha_i=1$. Let $\mu$ be a real function on $\sum$ such that for $i\in \{1,2,\ldots,m\}$,
    $$\mu(t_i)=\alpha_i\text{sign} f(t_i)\|f\|,\quad \text{and}\quad \nu(\mu,[0,1]\backslash \{t_1,t_2,\ldots,t_m\})=0.$$
\item[($b$)] For $[a,b]\subset M(f)$ with $0\leq a<b\leq 1$, we define $\mu$ on $\sum$ by
$$\mu[a,b]=\text{sign} f(a)\|f\|,\quad \text{and}\quad \nu(\mu,[0,1]\backslash [a,b])=0.$$
\end{description}
Then, $\mu\in rca[0,1]$ and $\mu\in Jf.$
\end{lm}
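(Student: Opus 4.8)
The plan is to check directly that, in each of the two cases, the prescribed set function $\mu$ is a genuine element of $rca[0,1]$ and satisfies the two equalities $\|\mu\|=\|f\|$ and $\langle \mu,f\rangle=\|f\|^2$; since $\|f\|>0$, these are exactly the relations $\langle \mu,f\rangle=\|\mu\|\,\|f\|=\|f\|^2=\|\mu\|^2$ appearing in the definition of the normalized duality map, so they give $\mu\in Jf$. I would first record the elementary consequences of the maximizing-set hypothesis. Since each $t_i\in M(f)$ and $\|f\|>0$, we have $|f(t_i)|=\|f\|>0$, so $\text{sign}\,f(t_i)\in\{-1,1\}$ is well defined and $f(t_i)=\text{sign}\,f(t_i)\,\|f\|$. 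In case ($b$), $|f|\equiv\|f\|$ on the connected set $[a,b]$, so by continuity $f$ keeps a constant sign there; hence $f(t)=\text{sign}\,f(a)\,\|f\|$ for every $t\in[a,b]$.

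Next I would verify the structural claim $\mu\in rca[0,1]$. In case ($a$), the condition $\nu(\mu,[0,1]\backslash\{t_1,\dots,t_m\})=0$ forces $\mu$ to be supported on the finite set $\{t_1,\dots,t_m\}$, so $\mu=\sum_{i=1}^m \alpha_i\,\text{sign}\,f(t_i)\,\|f\|\,\delta_{t_i}$, a finite combination of point masses, which is a countably additive signed Borel measure of finite total variation on $[0,1]$; hence $\mu\in rca[0,1]$. In case ($b$), the vanishing of the variation off $[a,b]$ together with the prescribed total mass $\mu[a,b]=\text{sign}\,f(a)\,\|f\|$ and the (tacit) requirement that $\mu$ not change sign on $[a,b]$ determine $\mu$ as a constant-sign finite measure concentrated on the closed interval $[a,b]\subset[0,1]$, for instance $\mu=\dfrac{\text{sign}\,f(a)\,\|f\|}{b-a}\,\lambda\big|_{[a,b]}$ with $\lambda$ Lebesgue measure; this again lies in $rca[0,1]$.

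Then I would compute the norm and the pairing. For the total variation: in case ($a$), $\nu(\mu,[0,1])=\sum_{i=1}^m \alpha_i\,|\text{sign}\,f(t_i)|\,\|f\|=\|f\|\sum_{i=1}^m\alpha_i=\|f\|$, using $\sum\alpha_i=1$; in case ($b$), $\mu$ has a single sign on its support, so $\nu(\mu,[0,1])=|\mu[a,b]|=\|f\|$. Thus $\|\mu\|=\|f\|$ in both cases. For the pairing $\langle\mu,f\rangle=\int_0^1 f\,d\mu$: in case ($a$), $\langle\mu,f\rangle=\sum_{i=1}^m f(t_i)\,\alpha_i\,\text{sign}\,f(t_i)\,\|f\|=\sum_{i=1}^m \alpha_i\,|f(t_i)|\,\|f\|=\|f\|^2\sum_{i=1}^m\alpha_i=\|f\|^2$; in case ($b$), using $f\equiv\text{sign}\,f(a)\,\|f\|$ on $[a,b]$ and $\mu$ supported there with $\mu[a,b]=\text{sign}\,f(a)\,\|f\|$, we get $\langle\mu,f\rangle=\text{sign}\,f(a)\,\|f\|\cdot\mu[a,b]=\|f\|^2$. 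Combining, $\langle\mu,f\rangle=\|f\|^2=\|\mu\|^2=\|\mu\|\,\|f\|$, which is precisely the defining system for $J$, so $\mu\in Jf$.

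I do not expect a real obstacle here: the argument is a chain of routine verifications. The only point needing slight care is the precise reading of the set function $\mu$ in case ($b$) — one must observe that prescribing $\mu[a,b]$ and the vanishing of the variation off $[a,b]$, together with constant sign on $[a,b]$, pins $\mu$ down only up to the way a fixed total mass is spread over $[a,b]$, and that any such choice serves equally well because $f$ is constant on $[a,b]$.
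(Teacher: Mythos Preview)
Your proof is correct and follows essentially the same approach as the paper: verify $\mu\in rca[0,1]$, compute $\|\mu\|=\|f\|$ via the total variation, and compute $\langle\mu,f\rangle=\|f\|^2$ using $|f(t_i)|=\|f\|$ (respectively $f\equiv\text{sign}\,f(a)\,\|f\|$ on $[a,b]$), then conclude $\mu\in Jf$. In fact you supply more detail than the paper, which omits the $rca[0,1]$ verification and the entire case~($b$) as ``similar''; your remark about the non-uniqueness of $\mu$ in case~($b$) is a fair reading of the statement and does not affect the conclusion since $f$ is constant on $[a,b]$.
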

\begin{proof}
We will first prove the claim under ($a$). The proof of $\mu\in rca[0,1]$ is straightforward and hence omitted. We will now show that $\mu\in Jf.$ From $\{t_1,t_2,\ldots,t_m\}\subset M(f)$, we have
\begin{equation}\label{KLR-S5.1-L2P-E1}
|f(t_i)|=\|f\|,\quad \text{for}\ i=1,2,\ldots,m,
\end{equation}
which implies that
\begin{equation}\label{KLR-S5.1-L2P-E2}
\|\mu\|=\nu(\mu,[0,1])=\sum_{i=1}^m|\mu(t_i)|=\sum_{i=1}^m|\alpha_i\text{sign}f(t_i)\|f\||=\|f\|\sum_{i=1}^{m}\alpha_i=\|f\|.
\end{equation}
By \eqref{KLR-S5.1-L2P-E1} and \eqref{KLR-S5.1-L2P-E2}, we evaluate
\begin{equation}\label{KLR-S5.1-L2P-E3}
\langle \mu,f\rangle=\int_0^1f(t)\mu(dt)=\sum_{i=1}^mf(t_i)\alpha_i\text{sign}f(t_i)\|f\|=\sum_{i=1}^m |f(t_i)|\alpha_i\|f\|=\|f\|^2\sum_{i=1}^m\alpha_i=\|f\|^2.
\end{equation}
By combining \eqref{KLR-S5.1-L2P-E2} and \eqref{KLR-S5.1-L2P-E3}, we obtain $\mu\in Jf.$ The proof under ($b$) is similar and hence omitted. \end{proof}

By combining Lemma~\ref{KLR-S5.1-E1} and Lemma~\ref{KLR-S5.1-L1}, we have the following result:
\begin{pr}\label{KLR-S5.1-P1} For any $f\in C[0,1]$ with $\|f\|>0$, we have
\begin{description}
\item[($a$)] If $M(f)$ is not a singletom, then $Jf$ is an infinite subset of $C^*[0,1]$.
\item[($b$)] $Jf$ is a singleton \quad $\Leftrightarrow$ \quad $M(f)$ is a singleton
\end{description}
\end{pr}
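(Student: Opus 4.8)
The plan is to establish (a) by producing, via Lemma~\ref{KLR-S5.1-L2}(a), an explicit one‑parameter family of distinct elements of $Jf$, and then to deduce (b) from (a) together with Lemma~\ref{KLR-S5.1-L1}.

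For part ($a$), I would assume $M(f)$ is not a singleton. Since $M(f)\neq\emptyset$, fix two distinct points $t_1,t_2\in M(f)$; note that $f(t_1),f(t_2)\neq 0$ because $|f(t_i)|=\|f\|>0$. For each $\alpha\in(0,1)$ apply Lemma~\ref{KLR-S5.1-L2}($a$) with $m=2$ and weights $(\alpha_1,\alpha_2)=(\alpha,1-\alpha)$ to obtain a measure $\mu_\alpha\in rca[0,1]$ with $\mu_\alpha\in Jf$ and $\mu_\alpha(\{t_1\})=\alpha\,\text{sign} f(t_1)\,\|f\|$. Since $\|f\|>0$, the map $\alpha\mapsto\mu_\alpha(\{t_1\})$ is injective, so $\{\mu_\alpha:\alpha\in(0,1)\}$ is an uncountable, hence in particular infinite, set of pairwise distinct elements of $Jf$. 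This proves ($a$).

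For part ($b$), the forward implication is the contrapositive of ($a$): if $M(f)$ is not a singleton, then $Jf$ is infinite and so is not a singleton. For the converse, suppose $M(f)=\{t_0\}$ and let $\mu\in Jf$ be arbitrary ($Jf\neq\emptyset$ by the definition of the normalized duality map). By Lemma~\ref{KLR-S5.1-L1}, $\nu(\mu,[0,1]\setminus\{t_0\})=0$; since $\nu(\mu,E)=0$ forces $\mu(F)=0$ for every measurable $F\subseteq E$ and $\{t_0\}$ is measurable, the measure $\mu$ must be a scalar multiple of the point mass $\delta_{t_0}$ at $t_0$, say $\mu=c\,\delta_{t_0}$ with $c\in\mathds{R}$, where $\delta_{t_0}(E)=1$ if $t_0\in E$ and $0$ otherwise. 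Then $\|\mu\|=\nu(\mu,[0,1])=|c|$ and $\langle\mu,f\rangle=\int_0^1 f(t)\,\mu(dt)=c\,f(t_0)$, so the defining relations $\langle\mu,f\rangle=\|\mu\|^2=\|f\|^2$ become $c\,f(t_0)=|c|^2=\|f\|^2$. Hence $|c|=\|f\|$, and since $c\,f(t_0)=\|f\|^2>0$ while $|f(t_0)|=\|f\|$, we must have $c=\text{sign}(f(t_0))\,\|f\|=\|f\|^2/f(t_0)$, a value uniquely determined by $f$. Therefore $Jf=\{\,(\|f\|^2/f(t_0))\,\delta_{t_0}\,\}$ is a singleton, which completes ($b$).

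The only genuinely delicate point is the assertion that a signed Radon measure on $[0,1]$ whose total variation vanishes off the single point $t_0$ is a scalar multiple of $\delta_{t_0}$; this is a routine measure‑theoretic fact, but it is the step that actually uses the identification $C^*[0,1]=rca[0,1]$ and should be stated explicitly. Everything else is bookkeeping with the two scalar equations defining $Jf$ and with the injectivity of the parameter $\alpha$ in part ($a$).
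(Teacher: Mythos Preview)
Your proposal is correct and follows essentially the same route as the paper: part~($a$) via Lemma~\ref{KLR-S5.1-L2}($a$) by varying convex weights on two points of $M(f)$, and part~($b$) by combining the contrapositive of~($a$) with Lemma~\ref{KLR-S5.1-L1} to pin down the unique point mass. Your treatment is in fact more explicit than the paper's (which merely says ``follows immediately'' for~($a$) and skips the reduction to a multiple of $\delta_{t_0}$ in~($b$)), but the underlying argument is the same.
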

\begin{proof} Part ($a$) follows from Lemma~\ref{KLR-S5.1-L1} immediately.

($b$). From ($a$), we have $Jf$ is a singleton implies that $M(f)$ is a singleton. On the other hand, assume that $M(f)$ is a singleton, say $M(f)=\{s\}$, for some $s\in [0,1]$. Then, for an arbitrary $\mu\in Jf$, by Lemma~\ref{KLR-S5.1-E1}, we obtain $\nu (\mu,[0,1]\backslash \{s\})=0$, which implies that
$$\|f\|^2=\langle \mu,f\rangle=\int_0^1f(t)\mu(dt)=\int_{\{s\}}f(t)\mu(dt)+ \int_{[0,1]\backslash \{s\}}f(t)\mu(dt)=\int_{\{s\}}f(t)\mu(dt)=f(s)\mu(\{s\}).$$
From $|f(s)|=\|f\|$, we get $\mu(\{s\})=\text{sign}(f(s))\|f\|,$ which completes the proof.
\end{proof}

For any $f,g\in C[0,1]$, we define
$$M(f\cap g)=\{t\in [0,1]:\ f(t)=g(t)\ \text{and}\ |f(t)|=\|f\|=|g(t)|=\|g\|\}.$$
It is clear that, for any $f,g\in C[0,1]$, we have the inclusion: $M(f\cap g)\subset M(f)\cap M(g).$
\begin{pr}\label{KLR-S5.1-P2} For any $f\in C[0,1]$ with $\|f\|>0$, we have
$$\mathcal{J}(f)\supseteq \{g\in C[0,1]:\ M(f\cap g)\ne \emptyset\}.$$
\end{pr}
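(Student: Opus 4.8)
The plan is to exhibit, for each admissible $g$, a single functional $\mu$ that lies simultaneously in $Jf$ and in $Jg$; this gives $Jf\cap Jg\ne\emptyset$, hence $g\in\mathcal{J}(f)$ by Definition~\ref{KLR-S4.1-D1}, which is exactly the claimed inclusion. So fix $g\in C[0,1]$ with $M(f\cap g)\ne\emptyset$ and choose a point $s\in M(f\cap g)$. By the very definition of $M(f\cap g)$ we then have $f(s)=g(s)$, $|f(s)|=\|f\|$, and $|g(s)|=\|g\|$; in particular $\|g\|=\|f\|>0$ and $\text{sign}\,f(s)=\text{sign}\,g(s)$. These two elementary consequences are the whole engine of the proof.

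Next I would produce the candidate functional from the constructive lemma. Apply Lemma~\ref{KLR-S5.1-L2}($a$) to $f$ with $m=1$, $t_1=s$, and $\alpha_1=1$: this yields the atomic measure $\mu\in rca[0,1]$ determined by $\mu(\{s\})=\text{sign}\,f(s)\,\|f\|$ together with $\nu(\mu,[0,1]\setminus\{s\})=0$, and the lemma already certifies that $\mu\in Jf$. Now observe that since $s\in M(g)$, $\text{sign}\,g(s)=\text{sign}\,f(s)$, and $\|g\|=\|f\|$, the same $\mu$ satisfies $\mu(\{s\})=\text{sign}\,g(s)\,\|g\|$ with $\nu(\mu,[0,1]\setminus\{s\})=0$. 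Hence Lemma~\ref{KLR-S5.1-L2}($a$), applied once more but now with $f$ replaced by $g$ and the same data $m=1$, $t_1=s$, $\alpha_1=1$, gives $\mu\in Jg$. Therefore $\mu\in Jf\cap Jg$, so $g\in\mathcal{J}(f)$.

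I expect no serious obstacle here: the argument is a direct construction, and the only step requiring a moment's care is recognizing that the nonemptiness of $M(f\cap g)$ automatically forces $\|f\|=\|g\|$ and the agreement of the signs at the common maximizing point $s$ — precisely the data needed for a single point mass at $s$ to serve as a normalized duality selection for both $f$ and $g$. Everything else is a routine check of the hypotheses of Lemma~\ref{KLR-S5.1-L2}. If one preferred to avoid invoking that lemma, the same $\mu$ could be verified to lie in $Jf\cap Jg$ straight from the definition of $J$, by computing $\|\mu\|=\nu(\mu,[0,1])=\|f\|=\|g\|$ and $\langle\mu,f\rangle=f(s)\mu(\{s\})=\|f\|^2$, and likewise $\langle\mu,g\rangle=g(s)\mu(\{s\})=\|g\|^2$; but routing through Lemma~\ref{KLR-S5.1-L2} keeps the write-up shortest.
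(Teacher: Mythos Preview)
Your proposal is correct and follows essentially the same approach as the paper: both pick $s\in M(f\cap g)$, define the atomic measure $\mu$ at $s$ with mass $\text{sign}\,f(s)\,\|f\|=\text{sign}\,g(s)\,\|g\|$, and conclude $\mu\in Jf\cap Jg$. Your version is slightly more explicit in invoking Lemma~\ref{KLR-S5.1-L2}($a$) and in spelling out why $\|f\|=\|g\|$ and the signs agree, but the underlying construction is identical.
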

\begin{proof} For any $g\in C[0,1]$, pick $s\in M(f\cap g)\ne \emptyset$. Then, similar to the proof of Proposition 4.3, define a real-valued and countable additive function $\mu$ on the given $\sigma$-field $\sum$ by
$$\mu(s)=\text{sign} f(s)\|f\|=\text{sign}g(s)\|g\|\quad\text{and}\quad \nu(\mu,[0,1]\backslash \{s\})=0.$$
Then $\mu\in rca[0,1]$ and $\mu \in Jf\cap Jg$, which ensures that $g\in \mathcal{J}(f).$
\end{proof}
\subsection{The generalized metric projection from  $C[0,1]$ to $\mathcal{P}_n$}\label{KLR-S5.2}
We will now study the generalized metric projection $\Pi_n:C\to 2^{\mathcal{P}_n}$. We will show that for any $f\in C[0,1]$ with $\|f\|>0$, $\Pi_{P_n}(f)$ is typically an infinite subset of $\mathcal{P}_n$ with cardinal number of $\aleph$, which is the cardinal number of the continuum. In this subsection, we denote by Card($A$), the cardinal number of a set $A$.
\begin{thr}\label{KLR-S5.2-T1} For any $f\in C[0,1]$ with $\|f\|>0$, the following statements hold:
\begin{description}
\item[($a$)] $\displaystyle \Pi_{\mathcal{P}_n}(f)\supseteq \mathcal{J}(f)\cap \mathcal{P}_n\supseteq \left\{\sum_{k=0}^n c_k t^k\in \mathcal{P}_n:\ M\left(f\cap \left(\sum_{k=0}^n c_k t^k\right)\right)\ne \emptyset\right\}.$
\item[($b$)] $\displaystyle \Pi_{\mathcal{P}_n}(f)\ne \emptyset$, that is, $\mathcal{P}_n$ is generalized proximal in $C[0,1]$ with respect to $\Pi_{\mathcal{P}_n}$
\item[($c$)] If $n=1$, then
\begin{align*}
M(f)\cap \{0,1\}=\emptyset\quad &\Rightarrow\quad 1\leq \text{Card}(\Pi_{\mathcal{P}_n}(f))\leq 2,\\
M(f)\cap \{0,1\}\ne\emptyset\quad &\Rightarrow\quad \aleph\leq \text{Card}(\Pi_{\mathcal{P}_n}(f)).
\end{align*}
\item[($d$)] If $n\geq 2$, then $\text{Card}(\Pi_{P_n}(f))\geq \aleph.$
\end{description}
\end{thr}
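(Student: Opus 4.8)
Everything hinges on part ($a$), which is essentially immediate, after which the cardinality statements ($c$) and ($d$) are obtained by producing, or excluding, large families of polynomials that share a maximizing point with $f$. For ($a$): the first inclusion is Theorem~\ref{KLR-S4.1-T1}($a$) applied with $C=\mathcal{P}_n$, and the second inclusion is Proposition~\ref{KLR-S5.1-P2} intersected with $\mathcal{P}_n$; note that $\|f\|>0$ together with $M(f\cap g)\neq\emptyset$ forces $\|g\|=\|f\|>0$, so Proposition~\ref{KLR-S5.1-P2} indeed applies to every such $g$.

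\textbf{Parts ($b$) and ($d$).} For ($b$), continuity of $f$ on the compact interval $[0,1]$ and $\|f\|>0$ give $M(f)\neq\emptyset$; fix $s\in M(f)$. The constant $g\equiv f(s)$ belongs to $\mathcal{P}_0\subseteq\mathcal{P}_n$, satisfies $\|g\|=|f(s)|=\|f\|$, and $s\in M(f\cap g)$, so $g\in\Pi_{\mathcal{P}_n}(f)$ by ($a$); hence $\Pi_{\mathcal{P}_n}(f)\neq\emptyset$. (One may instead note that, by Proposition~\ref{KLR-S2.1-P1}($a$),($c$), $V(\mu,\cdot)$ is continuous and coercive, and $\mathcal{P}_n$ is a finite-dimensional, hence closed, subspace, so $\pi_{\mathcal{P}_n}(\mu)\neq\emptyset$ for every $\mu\in Jf$.) For ($d$), let $n\geq 2$, fix $s\in M(f)$, and for $\lambda\in[0,1]$ define
\[
g_\lambda(t)=f(s)\bigl(1-\lambda(t-s)^2\bigr)\in\mathcal{P}_2\subseteq\mathcal{P}_n .
\]
On $[0,1]$ one has $0\leq\lambda(t-s)^2\leq\lambda\leq 1$, so $|g_\lambda(t)|\leq|f(s)|=\|f\|$, with equality exactly at $t=s$; therefore $\|g_\lambda\|=\|f\|$, $g_\lambda(s)=f(s)$, and $s\in M(f\cap g_\lambda)$, which by ($a$) puts $g_\lambda\in\Pi_{\mathcal{P}_n}(f)$. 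Since $f(s)\neq 0$, the assignment $\lambda\mapsto g_\lambda$ is injective, so $\mathrm{Card}\bigl(\Pi_{\mathcal{P}_n}(f)\bigr)\geq\aleph$ (and $\leq\aleph$, as $\dim\mathcal{P}_n<\infty$).

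\textbf{Part ($c$), the lower bound when $M(f)\cap\{0,1\}\neq\emptyset$.} Pick $s\in M(f)\cap\{0,1\}$; assume $s=0$, the case $s=1$ being symmetric under $t\mapsto 1-t$. For $\lambda\in[0,1]$ set $g_\lambda(t)=f(0)(1-\lambda t)\in\mathcal{P}_1$. Since $1-\lambda t\in[1-\lambda,1]\subseteq[0,1]$ on $[0,1]$, the affine function $g_\lambda$ attains its norm $\|f\|$ precisely at $t=0$, hence $0\in M(f\cap g_\lambda)$ and $g_\lambda\in\Pi_{\mathcal{P}_1}(f)$ by ($a$); these are pairwise distinct because $f(0)\neq 0$, so $\mathrm{Card}\bigl(\Pi_{\mathcal{P}_1}(f)\bigr)\geq\aleph$.

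\textbf{Part ($c$), the upper bound when $M(f)\cap\{0,1\}=\emptyset$.} Here $\mathrm{Card}\geq 1$ is part ($b$); the real content is $\mathrm{Card}\leq 2$. The first step is to compute $\mathcal{J}(f)\cap\mathcal{P}_1$. If $g\in\mathcal{J}(f)\cap\mathcal{P}_1$, pick $\mu\in Jf\cap Jg$; then $\|\mu\|=\|f\|>0$, so $\|g\|>0$, and Lemma~\ref{KLR-S5.1-L1} (applied to $f$ and to $g$) shows that $\mu$ is concentrated on $M(f)\cap M(g)$, which is thus nonempty. Every $s\in M(f)\cap M(g)$ is interior to $[0,1]$, since $s\in M(f)\subset(0,1)$; but a non-constant affine function attains its maximum modulus only at an endpoint of $[0,1]$, so $g$ must be constant, and then $g(s)=f(s)$ forces $g\equiv f(s)\in\{\|f\|,-\|f\|\}$. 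Hence $\mathcal{J}(f)\cap\mathcal{P}_1\subseteq\{\,\|f\|,\,-\|f\|\,\}$, a set of at most two elements, which by ($a$) accounts for the ``$\mathcal{J}$-part'' of $\Pi_{\mathcal{P}_1}(f)$. It remains to show that the generalized metric projections $g\in\pi_{\mathcal{P}_1}(\mu)$ with $V(\mu,g)>0$ contribute no points outside these two. For this I would argue directly on the two-dimensional space $\mathcal{P}_1$: parametrizing $y\in\mathcal{P}_1$ by $p=y(0)$ and $q=y(1)$, so $y(t)=(1-t)p+tq$ and $\|y\|=\max\{|p|,|q|\}$, one obtains, for $\mu\in Jf$,
\[
V(\mu,y)=\|f\|^2-2\bigl(a_\mu p+b_\mu q\bigr)+\max\{|p|,|q|\}^2,
\]
with $a_\mu=\int_0^1(1-t)\,\mu(dt)$ and $b_\mu=\int_0^1 t\,\mu(dt)$. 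Minimizing this planar quadratic for each fixed $\mu$, and then taking the union over the convex set $Jf$, is where the hypothesis $M(f)\subset(0,1)$ must be used, since it constrains the admissible pairs $(a_\mu,b_\mu)$. I expect this minimization-and-union step to be the main obstacle, as for a fixed $\mu$ the minimizer set $\pi_{\mathcal{P}_1}(\mu)$ need not be a singleton; once it is carried through, combining it with the description of $\mathcal{J}(f)\cap\mathcal{P}_1$ above yields $\mathrm{Card}\bigl(\Pi_{\mathcal{P}_1}(f)\bigr)\leq 2$.
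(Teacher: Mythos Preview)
Your treatment of parts ($a$), ($b$), ($d$), and the lower bound in ($c$) is correct and follows essentially the same route as the paper: ($a$) by combining Theorem~\ref{KLR-S4.1-T1}($a$) with Proposition~\ref{KLR-S5.1-P2}, ($b$) via the constant $f(s)$ for some $s\in M(f)$, and ($d$) and the second half of ($c$) by exhibiting one-parameter families of polynomials sharing a maximizing point with $f$. The explicit families differ slightly (the paper uses quadratics with vertex $(v,f(v))$ and prescribed endpoint values in ($d$), and linear functions joining $(a,f(a))$ to $(1-a,c)$ for $a\in\{0,1\}$ in ($c$)), but the underlying idea is identical.

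For the upper bound in ($c$) you have correctly isolated the genuine difficulty: part ($a$) gives only $\Pi_{\mathcal{P}_1}(f)\supseteq\mathcal{J}(f)\cap\mathcal{P}_1$, not equality, so bounding $\mathcal{J}(f)\cap\mathcal{P}_1$ does not by itself bound $\Pi_{\mathcal{P}_1}(f)$. Your argument that $\mathcal{J}(f)\cap\mathcal{P}_1\subseteq\{\pm\|f\|\}$ via Lemma~\ref{KLR-S5.1-L1} is correct and in fact more careful than what the paper writes. The paper's own proof at this point is extremely terse: it observes that the only polynomials in $\mathcal{P}_1$ with $M(f\cap g)\neq\emptyset$ are the constants $\pm\|f\|$ (since a non-constant affine function attains its norm only at an endpoint), and then asserts that $\Pi_{\mathcal{P}_1}(f)$ has at most two elements---in effect treating the inclusions in ($a$) as equalities without justification. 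So the step you flagged as ``the main obstacle'' is not actually carried out in the paper either; your planar minimization over $(p,q)=(y(0),y(1))$ is a reasonable line of attack toward completing it, but as you acknowledge, it remains unfinished.
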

\begin{proof} The first part of (a) follows from (a) of Theorem 3.2 and the second part follows from Proposition~\ref{KLR-S5.1-P1}. We shall now prove (b). We take an arbitrary fixed point $s\in M(f)$ and  define a constant function $k$ by
$$k(t)=f(s),\ \ \text{for every}\ t\in [0,1].$$
We see that $k\in \mathcal{P}_n$ with degree zero, which satisfies $s\in M(f\cap k).$ Then by part (a), $k\in \Pi_{\mathcal{P}_n}(f).$

We will now prove (c) If $M(f)\cap \{0,1\}=\emptyset $, from (b) and taking $s\in M(f),$  the unique polynomial $\mathcal{P}_1$ passing through $(s,f(s))$ is a constant function $k(t)=f(s)$, for every $t\in [0,1]$. If besides $s\in M(f)$, $M(f)$ contains another point $u$ with $f(u)=-f(s)$, we define
$$\ell(t)=f(u),\quad \text{for every}\ t\in [0,1]$$
then $u\in M(f\cap \ell)$. It follows that $\Pi_{\mathcal{P}_n}(f)$ contains at least one element $k$ and at most two elements $k$ and $\ell$.

Now suppose that $M(f)\cap \{0,1\}\ne \emptyset$ Assume $a\in M(f)\cap \{0,1\}$. Let $b=1-a$. Then, for any given fixed $c$ with $-|f(a)|<c< |f(a)|$, define a linear function $k_c$ that passes through the points $(a,f(a))$ and $(b,c)$. Then $a\in M(f\cap k_c)$. By (a), it follows that
$$\Pi_{P_1}\supseteq \{k_c\in \mathcal{P}_1|\ c\in (-|f(a)|,|f(a)|)\}.$$

We now proceed to prove (d). If $M(f)\cap \{0,1\}\ne \emptyset,$ then (d) follows from (c). Therefore, we assume that $M(f)\cap \{0,1\}=\emptyset,$ Then, there is $v\in (0,1)$ such that $|f(v)|=\|f\|$, For arbitrary numbers $c$, and $d$ with $-\|f\|<c$, $d<\|f\|$, define a quadratic function $q_{cd}$ with vertex $(v,f(v))$ and passing through points $(0,c)$ and $(1,d)$. It satisfies that $v\in M(f\cap q_{cd})$. By (a), it follows that
$$\Pi_{\mathcal{P}_n}(f)\supseteq \{q_{cd}\in \mathcal{P}_n|\ -\|f\|<c,\ d<\|f\|\}$$
which completes  the proof.
\end{proof}

The following corollary is a new version of part (a) of Theorem~\ref{KLR-S5.2-T1}. We rewrite it here to strengthen our understanding of the properties of the generalized metric projection on $\mathcal{P}_n$ and to enhance the clarity of the strategies for finding the generalized metric projection points.
\begin{co}\label{KLR-S5.2-C1}Let $f\in C[0,1]$ with $\|f\|>0$. Let $\displaystyle \sum_{k=0}^nc_kt^k\in \mathcal{P}_n$ be a polynomial satisfying:
\begin{description}
\item[($a$)] $\displaystyle \left\|\sum_{k=0}^nc_nt^n\right\|=\|f\|$.
\item[($b$)]\ There is at least one point   $s\in [0,1]$ such that $\displaystyle \sum_{k=0}^nc_ks^k=f(s)$ and
$$|f(s)|=\|f\|=\left|\sum_{k=0}^nc_ks^k\right|=\left\|\sum_{k=0}^nc_kt^k\right\|.$$
\end{description}
Then: $\displaystyle \sum_{k=1}^nc_kt^k\in \Pi_{\mathcal{P}_n}(f).$
\end{co}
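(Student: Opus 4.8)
The plan is to recognize Corollary~\ref{KLR-S5.2-C1} as a direct repackaging of part~($a$) of Theorem~\ref{KLR-S5.2-T1} together with Proposition~\ref{KLR-S5.1-P2}, so that essentially nothing new has to be proved; the work is entirely in unwinding the definitions. First I would set $g=\sum_{k=0}^n c_k t^k\in\mathcal{P}_n$ and observe that, by hypothesis ($a$), $\|g\|=\|f\|>0$, so both $f$ and $g$ have nonempty maximizing sets. Hypothesis ($b$) supplies a point $s\in[0,1]$ with $f(s)=g(s)$ and $|f(s)|=\|f\|=|g(s)|=\|g\|$; by the definition of $M(f\cap g)$ given just before Proposition~\ref{KLR-S5.1-P2}, this says precisely that $s\in M(f\cap g)$, hence $M(f\cap g)\ne\emptyset$.

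Next I would invoke the chain of inclusions in part~($a$) of Theorem~\ref{KLR-S5.2-T1}:
$$\Pi_{\mathcal{P}_n}(f)\ \supseteq\ \mathcal{J}(f)\cap\mathcal{P}_n\ \supseteq\ \Bigl\{\textstyle\sum_{k=0}^n c_k t^k\in\mathcal{P}_n:\ M\bigl(f\cap(\sum_{k=0}^n c_k t^k)\bigr)\ne\emptyset\Bigr\}.$$
Since $g$ is an element of $\mathcal{P}_n$ for which $M(f\cap g)\ne\emptyset$, it belongs to the set on the right-hand side, and therefore $g\in\Pi_{\mathcal{P}_n}(f)$, which is the asserted conclusion $\sum_{k=0}^n c_k t^k\in\Pi_{\mathcal{P}_n}(f)$. (One could equally bypass the theorem and argue directly: by Proposition~\ref{KLR-S5.1-P2}, $M(f\cap g)\ne\emptyset$ gives $g\in\mathcal{J}(f)$, i.e. there is $\mu\in Jf\cap Jg$; then $V(\mu,g)=0$ by Proposition~\ref{KLR-S2.1-P1}($d$), while $V(\mu,h)\ge 0$ for every $h\in\mathcal{P}_n$, so $g$ attains the infimum defining $\pi_{\mathcal{P}_n}(\mu)$, whence $g\in\pi_{\mathcal{P}_n}(\mu)\subseteq\Pi_{\mathcal{P}_n}(f)$ by Definition~\ref{KLR-S3.1-D1}.)

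There is essentially no obstacle here: the only thing to be careful about is the bookkeeping that hypotheses ($a$) and ($b$) together are exactly equivalent to the membership condition $M(f\cap g)\ne\emptyset$ appearing in Theorem~\ref{KLR-S5.2-T1}($a$) — in particular that ($a$) is needed to guarantee $\|g\|=\|f\|$, which is built into the definition of $M(f\cap g)$, and is not implied by ($b$) alone. Once that equivalence is noted, the corollary is immediate. I would keep the write-up to two or three sentences, citing Theorem~\ref{KLR-S5.2-T1}($a$) (or, alternatively, Proposition~\ref{KLR-S5.1-P2} and Definition~\ref{KLR-S3.1-D1}) and nothing more.
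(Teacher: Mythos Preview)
Your proposal is correct and matches the paper's own approach exactly: the paper introduces this corollary explicitly as ``a new version of part ($a$) of Theorem~\ref{KLR-S5.2-T1}'' and gives no separate proof, so your reduction of hypotheses ($a$) and ($b$) to the condition $M(f\cap g)\ne\emptyset$ followed by an appeal to Theorem~\ref{KLR-S5.2-T1}($a$) is precisely what is intended. One small remark: your claim that ($a$) ``is not implied by ($b$) alone'' is inaccurate, since the chain of equalities in ($b$) already contains $\|f\|=\bigl\|\sum_{k=0}^n c_kt^k\bigr\|$; hypothesis ($a$) is in fact redundant, but this does not affect your argument.
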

\begin{co}\label{KLR-S5.2-C2} For any $\displaystyle \sum_{k=0}^nc_kt^k\in \mathcal{P}_n$ with $\displaystyle \|\sum_{k=0}^nc_nt^n\|>0$, assume that
\begin{equation}\label{Co4.7E1}
M(f)\cap M\left(\sum_{k=0}^nc_kt^k\right)\ne \emptyset.
\end{equation}
Then:
$$\epsilon \frac{\|f\|}{\|\sum_{k=0}^nc_kt^k\|}\sum_{k=0}^nc_kt^n\in \Pi_{\mathcal{P}_n}(f),\quad \text{where}\ \epsilon\in \{-1,1\}.$$
\end{co}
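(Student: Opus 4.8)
The plan is to deduce the statement directly from Corollary~\ref{KLR-S5.2-C1} (equivalently, from part~($a$) of Theorem~\ref{KLR-S5.2-T1}). Write $p(t)=\sum_{k=0}^{n}c_k t^k$, so that $p\in\mathcal{P}_n$ and $\|p\|>0$ by hypothesis, and for $\epsilon\in\{-1,1\}$ set $q_\epsilon:=\epsilon\,\dfrac{\|f\|}{\|p\|}\,p$; this is again an element of $\mathcal{P}_n$, and it is well defined precisely because $\|p\|>0$. First I would dispatch the normalization hypothesis of Corollary~\ref{KLR-S5.2-C1}: since $q_\epsilon$ is the scalar multiple of $p$ by a number of modulus $\|f\|/\|p\|$, we get $\|q_\epsilon\|=\|f\|$ for either sign, and moreover $|q_\epsilon|=(\|f\|/\|p\|)\,|p|$ gives the elementary identity $M(q_\epsilon)=M(p)$, which I will use in the next step.

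Next I would exhibit the required coincidence point. By the standing assumption there exists $s\in M(f)\cap M(p)=M(f)\cap M(q_\epsilon)$; at such an $s$ we have $|f(s)|=\|f\|>0$ and $|p(s)|=\|p\|>0$, so $\text{sign}(f(s))$ and $\text{sign}(p(s))$ are well defined, with $f(s)=\text{sign}(f(s))\,\|f\|$ and $p(s)=\text{sign}(p(s))\,\|p\|$. Choosing $\epsilon:=\text{sign}(f(s))\cdot\text{sign}(p(s))$ yields
$$q_\epsilon(s)=\epsilon\,\frac{\|f\|}{\|p\|}\,\text{sign}(p(s))\,\|p\|=\text{sign}(f(s))\,\|f\|=f(s),$$
and therefore $|f(s)|=\|f\|=|q_\epsilon(s)|=\|q_\epsilon\|$ together with $f(s)=q_\epsilon(s)$; that is, $s\in M(f\cap q_\epsilon)\neq\emptyset$. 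Hypotheses ($a$) and ($b$) of Corollary~\ref{KLR-S5.2-C1} now hold for $q_\epsilon$ (equivalently, $q_\epsilon$ lies in the set on the right-hand side of part~($a$) of Theorem~\ref{KLR-S5.2-T1}), so $q_\epsilon\in\Pi_{\mathcal{P}_n}(f)$, which is exactly the assertion for this choice of $\epsilon$.

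I do not anticipate a genuine obstacle here: the whole argument is a sign-bookkeeping specialization of Corollary~\ref{KLR-S5.2-C1}. The only point worth recording carefully is that the admissible $\epsilon$ is \emph{forced} by the signs of $f$ and $p$ at whichever common maximizing point $s$ one selects, so the conclusion should be read as holding for a suitable $\epsilon\in\{-1,1\}$; and that Corollary~\ref{KLR-S5.2-C1} requires only a single coincidence point, so no compatibility among the (possibly many) points of $M(f)\cap M(p)$ needs to be checked. The step most in need of explicit justification is the trivial-but-essential identity $M(q_\epsilon)=M(p)$, which is what lets us transfer the hypothesis $M(f)\cap M(p)\neq\emptyset$ into the form demanded by Corollary~\ref{KLR-S5.2-C1}.
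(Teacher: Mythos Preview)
Your proof is correct and follows essentially the same route as the paper: normalize $p$ to have norm $\|f\|$, choose the sign $\epsilon$ so that the resulting polynomial agrees with $f$ at a point of $M(f)\cap M(p)$, and then invoke part~($a$) of Theorem~\ref{KLR-S5.2-T1} (via Corollary~\ref{KLR-S5.2-C1}). The paper's argument is terser and leaves the explicit choice of $\epsilon$ and the identity $M(q_\epsilon)=M(p)$ implicit, whereas you spell these out, but the logic is the same.
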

\begin{proof} Since
$$\left\|\epsilon \frac{\|f\|}{\|\sum_{k=0}^nc_kt^k\|}\sum_{k=0}^nc_kt^k\right\|=\|f\|$$
and by \eqref{Co4.7E1}, we can select $\epsilon=1$ or $-1$ such that
$$M\left(f\cap \left(\epsilon \frac{\|f\|}{\|\sum_{k=0}^nc_kt^k\|}\sum_{k=0}^nc_kt^k\right)\right)\ne\emptyset$$
Then this corollary follows from (a) of Theorem 4.5.
\end{proof}
\section{Variational Characterizations of the metric projection}\label{KLR-S6}
We shall now provide some variational characterizations for the metric projection.
\begin{pr}\label{KLR-S6-P1} Let $B$ be a Banach space and let $C$ be a nonempty subset of $B$. For any given $x\in B$, if there is $j(x-z)\in J(x-z)$ such that
\begin{equation}\label{KLR-S6-P1-E1}
\langle j(x-z),z-y\rangle \geq 0,\quad \text{for all}\ y\in C.
\end{equation}
then $z\in P_{\text{\tiny{\emph{C}}}}(x)$.
\end{pr}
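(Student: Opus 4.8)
The plan is to argue directly from the inequality \eqref{KLR-S6-P1-E1}, using the defining property of the normalized duality map applied to the vector $x-z$, exactly in the spirit of the proof of Theorem~\ref{KLR-S2.4-T2}. First I would fix $x\in B$ and assume there is $j(x-z)\in J(x-z)$ satisfying \eqref{KLR-S6-P1-E1}. Recall that $j(x-z)\in J(x-z)$ means $\langle j(x-z),x-z\rangle=\|x-z\|^2=\|j(x-z)\|^2$. The goal is to show $\|x-z\|\le\|x-y\|$ for every $y\in C$, which is precisely the statement $z\in P_{\text{\tiny{\emph{C}}}}(x)$.

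The key computation is to estimate $\|x-y\|^2-\|x-z\|^2$ from below by a nonnegative quantity. For any $y\in C$, write
\begin{align*}
\|x-y\|^2-\|x-z\|^2
&=\|x-y\|^2-\langle j(x-z),x-z\rangle\\
&=\|x-y\|^2-\langle j(x-z),x-y\rangle-\langle j(x-z),y-z\rangle.
\end{align*}
Now bound $\langle j(x-z),x-y\rangle\le \|j(x-z)\|\,\|x-y\|=\|x-z\|\,\|x-y\|\le \tfrac12\big(\|x-z\|^2+\|x-y\|^2\big)$ by Cauchy–Schwarz for the pairing and the arithmetic–geometric mean inequality; hence $\|x-y\|^2-\langle j(x-z),x-y\rangle\ge \tfrac12\big(\|x-y\|^2-\|x-z\|^2\big)$. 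Combining, one gets
$$\tfrac12\big(\|x-y\|^2-\|x-z\|^2\big)\ \ge\ -\langle j(x-z),y-z\rangle\ =\ \langle j(x-z),z-y\rangle\ \ge\ 0,$$
where the last inequality is the hypothesis \eqref{KLR-S6-P1-E1}. Therefore $\|x-y\|^2\ge\|x-z\|^2$, i.e. $\|x-z\|\le\|x-y\|$ for all $y\in C$, so $z\in P_{\text{\tiny{\emph{C}}}}(x)$.

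I do not anticipate a genuine obstacle here; the only point requiring a little care is organizing the chain of inequalities so that the cross term $\langle j(x-z),x-y\rangle$ is absorbed cleanly — the clean way is to use $\|x-z\|\|x-y\|\le\tfrac12(\|x-z\|^2+\|x-y\|^2)$ rather than trying to factor. (Alternatively one can run the argument exactly as in Theorem~\ref{KLR-S2.4-T2} by expanding $V$-type expressions in $x-z$ and $x-y$; the two routes are equivalent.) No convexity or closedness of $C$, and no reflexivity of $B$, is needed — just the existence of a single selection $j(x-z)$ satisfying \eqref{KLR-S6-P1-E1}. This parallels the fact, noted after Theorem~\ref{KLR-S2.4-T1}, that such a variational inequality is only a sufficient, not a necessary, condition in general Banach spaces.
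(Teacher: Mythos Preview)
Your proof is correct and is essentially the same as the paper's. The paper invokes property $(J_6)$ from the Appendix---namely $2\langle \psi,u-v\rangle\le\|u\|^2-\|v\|^2$ for $\psi\in J(v)$---applied with $u=x-y$, $v=x-z$, to obtain $\|x-y\|^2-\|x-z\|^2\ge 2\langle j(x-z),z-y\rangle\ge 0$ in one line; your computation simply unpacks the proof of $(J_6)$ (Cauchy--Schwarz plus AM--GM) inline rather than citing it, arriving at the equivalent inequality $\tfrac12(\|x-y\|^2-\|x-z\|^2)\ge\langle j(x-z),z-y\rangle$.
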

\begin{proof} By the property ($J_6$) of the normalized duality mapping on Banach spaces, for any $j(x-z)\in J(x-z)$ and for any $y\in C$, by \eqref{KLR-S6-P1-E1}, we have
$$\|x-y\|^2-\|x-z\|^2\geq 2\langle j(x-z),(x-y)-(x-z)\rangle=2\langle j(x-z),z-y\rangle\geq 0,\quad \text{for all}\ y\in C,$$
which confirms that $z\in P_{\text{\tiny{\emph{C}}}}(x).$
\end{proof}

\begin{pr}\label{KLR-S6-P2} Let $B$ be a Banach space and let $C$ be a nonempty, closed, and convex subset of $B$. For any given $x\in B\backslash C$, if $z\in P_{\text{\tiny{\emph{C}}}}(x)$, then there exists $j(x-z)\in J(x-z)$ such that
\begin{equation}\label{KLR-S6-P2-E1}
\langle j(x-z),z-y\rangle \geq 0,\quad \text{for all}\ y\in C.
\end{equation}
\end{pr}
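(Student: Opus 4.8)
\emph{Proof proposal.} The plan is to obtain the functional $j(x-z)$ from a Hahn--Banach separation argument (the classical route to characterizations of best approximations in normed spaces), rather than by manipulating the duality map directly.

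First I would record the basic geometric facts. Since $x\in B\backslash C$ and $C$ is closed, the number $d:=\inf_{y\in C}\|x-y\|$ is strictly positive, and since $z\in P_{\text{\tiny{\emph{C}}}}(x)$ with $z\in C$ we have $\|x-z\|=d$. Consider the open ball $U:=\{w\in B:\ \|w-x\|<d\}$. It is nonempty (as $d>0$), open, and convex, and by the very definition of $d$ no point of $C$ lies in $U$, so $U\cap C=\emptyset$. Hence the separation form of the Hahn--Banach theorem (valid here precisely because $U$ is \emph{open}, so no compactness or reflexivity is needed) yields a nonzero $f\in B^*$ and a scalar $\alpha$ with $f(u)\le\alpha\le f(y)$ for all $u\in U$ and $y\in C$; rescaling, we may take $\|f\|=1$. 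Since $\sup_{u\in U}f(u)=f(x)+d\|f\|=f(x)+d$, this gives
$$f(x)+d\le f(y),\qquad \text{for all}\ y\in C.$$

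Next I would pin down $f$ on $z$. Taking $y=z$ above gives $f(x)+d\le f(z)$; on the other hand $f(z)-f(x)=f(z-x)\le\|f\|\,\|z-x\|=d$. Combining forces $f(z-x)=d=\|x-z\|$ and $f(z)=f(x)+d$. Put $\psi:=-f$, so $\|\psi\|=1$ and $\psi(x-z)=\|x-z\|$; moreover, from $f(x)+d\le f(y)$ together with $f(z)=f(x)+d$ we get $\psi(y)=-f(y)\le -f(z)=\psi(z)$, that is, $\langle\psi,z-y\rangle\ge 0$ for every $y\in C$. Finally, set $j(x-z):=\|x-z\|\,\psi\in B^*$. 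Then $\|j(x-z)\|=\|x-z\|$ and $\langle j(x-z),x-z\rangle=\|x-z\|\,\psi(x-z)=\|x-z\|^2$, so $j(x-z)\in J(x-z)$ by the definition of the normalized duality map, while $\langle j(x-z),z-y\rangle=\|x-z\|\,\langle\psi,z-y\rangle\ge 0$ for all $y\in C$, which is exactly \eqref{KLR-S6-P2-E1}.

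The only genuinely delicate point is the separation step: one must separate the \emph{open} ball $U$ from $C$, and then read off from the \emph{single} separating functional both the norming identity $\psi(x-z)=\|x-z\|$ and the inequality $\langle\psi,z-y\rangle\ge0$; everything else is sign bookkeeping together with the elementary estimate $f(z-x)\le\|x-z\|$. Together with Proposition~\ref{KLR-S6-P1}, this completes the variational characterization of $P_{\text{\tiny{\emph{C}}}}$ on closed convex sets for points outside $C$.
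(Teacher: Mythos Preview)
Your proof is correct and follows essentially the same route as the paper: the paper invokes \cite[Theorem~3.8.4]{Zal02} to obtain a unit functional $f$ with $\langle f,z-x\rangle=\|x-z\|$ and $\langle f,y-z\rangle\ge 0$ for all $y\in C$, and then sets $g:=-\|x-z\|f$ exactly as you set $j(x-z):=\|x-z\|\psi$ with $\psi=-f$. The only difference is that you carry out the Hahn--Banach separation of the open ball $U$ from $C$ explicitly, whereas the paper hides that step inside the citation.
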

\begin{proof} For $x\in B\backslash C$ and $z\in C$, if $z\in P_{\text{\tiny{\emph{C}}}}(x)$, by \cite[Theorem 3.8.4]{Zal02}, there exists $f\in B^*$ with $\|f\|=1$ such that
\begin{equation}\label{KLR-S6-P2P-E1}
\langle f,z-x\rangle=\|x-z\|\quad \text{and}\ \langle f,y-z\rangle\geq 0,\quad \text{for all}\ y\in C.
\end{equation}
We define $g\in B^*$ by
\begin{equation}\label{KLR-S6-P2P-E2}g=-\|x-z\|f,
\end{equation}
which due to $\|f\|=1$ implies that
\begin{equation}\label{KLR-S6-P2P-E3}\|g\|=\|-\|x-z\|\,f\|=\|x-z\|\|f\|=\|x-z\|.
\end{equation}
Then, \eqref{KLR-S6-P2P-E1} and \eqref{KLR-S6-P2P-E2} yield
\begin{equation}\label{KLR-S6-P2P-E4}\langle g,x-z\rangle=\langle -\|x-z\|f,x-z\rangle=\|x-z\|\langle f,z-x\rangle=\|x-z\|^2
\end{equation}
By combining \eqref{KLR-S6-P2P-E3} and \eqref{KLR-S6-P2P-E4}, we get $g\in J(x-z).$

Moreover, by the second part of \eqref{KLR-S6-P2P-E1} and \eqref{KLR-S6-P2P-E2},  we have
$$\langle g,z-y\rangle=\langle -\|x-z\|f,z-y\rangle=\|x-z\|\langle f,y-z\rangle\geq 0,\quad \text{for all}\ y\in C. $$
Then  \eqref{KLR-S6-P1-E1}  is proved by simply rewriting $g=j(x-z)\in J(x-z).$
\end{proof}

By combining Proposition~\ref{KLR-S6-P1} and Proposition~\ref{KLR-S6-P2}, we obtain
\begin{thr}\label{KLR-S6-T1} Let $B$ be a Banach space and let $C$ be a nonempty, closed, and convex subset of $B$. For any given $x\in B\backslash C$ and $z\in C$, we have $z\in P_{\text{\tiny{\emph{C}}}}(x)$ if and only if there is $j(x-z)\in J(x-z)$ such that
$$\langle j(x-z),z-y\rangle\geq 0,\quad \text{for all}\ y\in C. $$
\end{thr}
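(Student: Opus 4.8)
The plan is to observe that Theorem~\ref{KLR-S6-T1} is precisely the conjunction of Proposition~\ref{KLR-S6-P1} and Proposition~\ref{KLR-S6-P2}, so the proof consists of assembling the two directions that have already been established. No new computation is needed; the task is to state clearly which half supplies which implication and under which hypotheses.

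For the ``if'' direction I would simply apply Proposition~\ref{KLR-S6-P1}: assuming that there is $j(x-z)\in J(x-z)$ with $\langle j(x-z),z-y\rangle\geq 0$ for all $y\in C$, the subgradient inequality $(J_6)$ for the duality map (namely $\|u\|^2-\|v\|^2\geq 2\langle jv,u-v\rangle$ applied with $v=x-z$ and $u=x-y$) gives $\|x-y\|^2-\|x-z\|^2\geq 2\langle j(x-z),z-y\rangle\geq 0$ for every $y\in C$, whence $z\in P_{\text{\tiny{\emph{C}}}}(x)$. I would note that this implication uses neither closedness nor convexity of $C$, nor the hypothesis $x\notin C$; it is the ``easy'' half.

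For the ``only if'' direction I would invoke Proposition~\ref{KLR-S6-P2}: given $x\in B\backslash C$ and $z\in P_{\text{\tiny{\emph{C}}}}(x)$, the separation result \cite[Theorem 3.8.4]{Zal02} yields a functional $f\in B^*$ with $\|f\|=1$, $\langle f,z-x\rangle=\|x-z\|$, and $\langle f,y-z\rangle\geq 0$ for all $y\in C$; rescaling to $g:=-\|x-z\|f$ one checks $\|g\|=\|x-z\|$ and $\langle g,x-z\rangle=\|x-z\|^2$, so $g\in J(x-z)$, and the supporting inequality transfers to $\langle g,z-y\rangle\geq 0$ for all $y\in C$. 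Taking $j(x-z):=g$ completes this half.

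The main obstacle is the ``only if'' direction, and within it the separation step itself: one must use that $z$ is a nearest point (equivalently, that the open ball of radius $\|x-z\|$ centered at $x$ is disjoint from the closed convex set $C$, with $x\notin C$ guaranteeing this ball is genuinely nonempty) in order to produce a supporting hyperplane of $C$ at $z$, and then perform the normalization that converts the geometric functional $f$ into an element of the duality set $J(x-z)$. This is exactly where closedness, convexity, and $x\in B\backslash C$ enter; once Propositions~\ref{KLR-S6-P1} and \ref{KLR-S6-P2} are available, the theorem follows immediately by combining the two implications.
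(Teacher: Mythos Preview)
Your proposal is correct and matches the paper's approach exactly: the paper proves Theorem~\ref{KLR-S6-T1} simply by combining Propositions~\ref{KLR-S6-P1} and \ref{KLR-S6-P2}, and your recapitulation of each direction (the $(J_6)$ inequality for sufficiency, the separation/rescaling argument from \cite[Theorem 3.8.4]{Zal02} for necessity) reproduces those propositions' proofs faithfully.
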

\section{Concluding Remarks}\label{KLR-S7}
We carried out an extensive study of the generalized projection and the generalized metric projection in general Banach spaces for arbitrary sets.
We provided illustrative examples to shed some light on the structure of the proposed notions. It would be of interest to develop iterative solvers for variational inequalities and convex feasibility problem based on the projections in general Banach spaces (see \cite{AlbBut97}). Moreover, the stochastic approximation approach has been extensively studied for stochastic variational inequalities and stochastic inverse problems in recent years (see \cite{AlbChiLi12,BarRoyStr07,Rachel2020}). However, there are no explorations of these topics in Banach spaces, and it seems natural to use generalized projections to address these issues. Projection methods have also been used for vector optimization problems but only in the framework of Hilbert spaces (see \cite{ZhaKobYao21});  their extensions to Banach spaces can naturally benefit from the proposed notions.\\[5pt]
\textbf{Acknowledgment.} Simeon Reich was partially supported by the Israel Science Foundation (Grant 820/17), the Fund for the Promotion of Research at the Technion and by the Technion General Research Fund.
\section*{Appendix}\label{KLR-S8}
We list some properties below for easy reference, for details see \cite{Cio90,Rei92,Tak00}.
\begin{lm}\label{KLR-S8-L1}
Let $B$ be a Banach space, let $B^*$ be the dual of $B^*$, let $\langle \cdot,\cdot\rangle$ be the duality paoring, and let $J$ is the normalized duality map. Then the following properties hold:
\begin{description}
\item[($J_1$)] For any $x\in B$, $J(x)$ is nonempty, bounded, closed, and convex subset of $B^*$.
\item[($J_2$)] $J$ is the identity operator in any Hilbert space $H$, that is, $J = I_H$.
\item[($J_3$)] $J(\theta)=\theta$, where $\theta$ denotes the origin.
\item[($J_4$)] For any $x\in B$ and a real number $\alpha$, we have $J(\alpha x)=\alpha J(x).$
\item[($J_5$)] For any $x,y\in B$, $\phi\in J(x)$ and $\psi\in J(y)$, we have $\langle \phi-\psi,x-y\rangle\geq 0. $
\item[($J_6$)] For any $x,y\in B$ and $\psi\in J(y),$ we have $2\langle \psi,x-y\rangle\leq \|x\|^2-\|y\|^2$.
\item[($J_7$)] $B$ is strictly convex, if and only if, $J$ is one-to-one, that is, for  any $x,y\in B$, $x\ne y$ implies $J(x)\cap J(y)=\emptyset.$
\item[($J_8$)] $B$ is strictly convex, if and only if, $J$ is strictly monotone, that is, for  any $x,y\in B$ with $x\ne y$, and for $\psi\in J(x)$ and $\psi\in J(y)$, we have $\langle \phi-psi,x-y\rangle>0. $
\item[($J_9$)] $B$ is strictly convex, if and only if, for  any $x,y\in B$ with $\|x\|=\|y\|=1$ and $x\ne y$, $\phi\in J(x)$ implies that $\langle \phi,y\rangle <1$.
\item[($J_{10}$)] If $B^*$ is strictly convex, then $J$ is a single-valued map.
\item[($J_{11}$)] $B$ is smooth, if and only if, $J$ is a single-valued map.
\item[($J_{12}$)] If $J$ is a single-valued mapping, then $J$ is a norm to weak$^*$ continuous.
\item[($J_{13}$)]  $B$ is reflexive if and only if,  $J$ is a map of $B$ onto $B^*$.
\item[($J_{14}$)]  If $B$ is smooth, then $J$ is continuous.
\item[($J_{15}$)]  $J$ is a uniformly continuous  on each bounded set in uniformly smooth Banach spaces.
\end{description}
\end{lm}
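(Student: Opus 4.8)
The plan is to treat Lemma~\ref{KLR-S8-L1} as what it is, a compilation of classical facts about the normalized duality map, and to prove it in three layers: the algebraic and order-theoretic items $(J_1)$--$(J_6)$ by short direct arguments from the definition of $J$; the strict-convexity and smoothness characterizations $(J_7)$--$(J_{11})$ from one structural observation about $J(x)$ together with the standard geometric descriptions of strict convexity and smoothness; and the reflexivity and continuity statements $(J_{12})$--$(J_{15})$ by invoking the deep classical theorems of James, \v{S}mulian, and Browder, for which I would cite \cite{Cio90,Rei92,Tak00} rather than reprove them.

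First I would dispatch the elementary items. For $(J_1)$, nonemptiness of $J(x)$ is the Hahn--Banach theorem (take a norming functional $f$ with $\|f\|=1$ and $\langle f,x\rangle=\|x\|$, and put $\phi=\|x\|f$); boundedness is immediate since $\|\phi\|=\|x\|$ for every $\phi\in J(x)$; and if $\phi_1,\phi_2\in J(x)$ then $\langle\tfrac12(\phi_1+\phi_2),x\rangle=\|x\|^2$ forces $\|\tfrac12(\phi_1+\phi_2)\|\ge\|x\|$, while the triangle inequality forces $\le\|x\|$, so the midpoint again lies in $J(x)$; norm-closedness is read off the defining equalities, and together these give convexity and closedness. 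Item $(J_2)$ is the Riesz representation theorem, $(J_3)$ is immediate, and $(J_4)$ follows from $\|\alpha x\|^2=\alpha^2\|x\|^2$ and $\|\alpha\phi\|=|\alpha|\|\phi\|$. For $(J_5)$ and $(J_6)$ I would use the one-line estimates: for $\phi\in J(x)$ and $\psi\in J(y)$ one has $\langle\phi-\psi,x-y\rangle=\|x\|^2+\|y\|^2-\langle\phi,y\rangle-\langle\psi,x\rangle\ge(\|x\|-\|y\|)^2\ge 0$, and likewise $\|x\|^2-\|y\|^2-2\langle\psi,x-y\rangle=\|x\|^2+\|y\|^2-2\langle\psi,x\rangle\ge(\|x\|-\|y\|)^2\ge 0$.

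For the geometric characterizations $(J_7)$--$(J_{11})$ the key points are: (i) the midpoint of two elements of $J(x)$ again belongs to $J(x)$ (shown above), so strict convexity of $B^*$ forces $J$ to be single-valued, which is $(J_{10})$, while $(J_{11})$ is the standard identification of smoothness of $B$ with G\^ateaux differentiability of the norm, whose derivative at $x\ne\theta$ is the unique element of $\|x\|^{-1}J(x)$; and (ii) when $\|x\|=\|y\|=1$, membership $\phi\in J(x)\cap J(y)$ is equivalent to $\|\phi\|=1$ with $\langle\phi,x\rangle=\langle\phi,y\rangle=1$, hence to $\|\tfrac12(x+y)\|=1$ with $\phi$ norming $\tfrac12(x+y)$; combining this equivalence with the definition of strict convexity yields $(J_7)$ and $(J_9)$, and sharpening the estimate in $(J_5)$ in the same way yields the strict monotonicity $(J_8)$ (after rescaling via $(J_4)$ to remove the normalization). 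The remaining items I would quote: $(J_{12})$ and $(J_{14})$ from the \v{S}mulian-type duality between smoothness of the norm and norm-to-weak$^*$ continuity of $J$, $(J_{13})$ from James' characterization of reflexivity (so that $J$ maps $B$ onto $B^*$ exactly when $B$ is reflexive), and $(J_{15})$ from Browder's classical result on uniformly smooth spaces; all of these are in \cite{Cio90,Rei92,Tak00}.

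The main obstacle is that the lemma is not self-contained: $(J_{13})$ depends on James' theorem and $(J_{12})$, $(J_{14})$, $(J_{15})$ on the \v{S}mulian--Browder circle of ideas, none of which admits a short elementary proof. The honest plan is therefore to give complete proofs of $(J_1)$--$(J_6)$ and $(J_{10})$, to derive $(J_7)$--$(J_9)$ and $(J_{11})$ from the standard geometric descriptions of strict convexity and smoothness, and to attribute $(J_{12})$--$(J_{15})$ to the cited monographs, exactly as the remark ``for details see \cite{Cio90,Rei92,Tak00}'' in the statement already indicates.
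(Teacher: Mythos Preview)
The paper does not prove this lemma at all: it is placed in the Appendix with the preface ``We list some properties below for easy reference, for details see \cite{Cio90,Rei92,Tak00}'' and no argument is given for any of the fifteen items. Your proposal is therefore strictly more than what the paper offers; your direct arguments for $(J_1)$--$(J_6)$ and your derivations of $(J_7)$--$(J_{11})$ from the definitions of strict convexity and smoothness are correct, and your decision to cite the monographs for the James/\v{S}mulian/Browder-dependent items $(J_{12})$--$(J_{15})$ matches the paper's own stance exactly.
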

}
\printbibliography
\end{document}